\newtheorem{thm}{Theorem}[section]
\newtheorem{lemma}[thm]{Lemma}
\newtheorem{corollary}[thm]{Corollary}
\theoremstyle{definition}
\newtheorem{definition}[thm]{Definition}
\theoremstyle{remark}
\newtheorem{remark}[thm]{Remark}
\newcommand{\dx}[0]{\,\mathrm{d}}
\newcommand{\fbsde}[0]{}
\newcommand{\esssup}[0]{\mathrm{ess}\,\mathrm{sup}}
\title{An FBSDE approach to the Skorokhod embedding problem for Gaussian processes with non-linear drift}
\author{Alexander Fromm\thanks{A.F. was financially supported by the DFG Research Training Group 1845 ``Stochastic Analysis with Applications in Biology, Finance and Physics''.}
\\ Institut f{\"u}r Mathematik \\ Technische Universit{\"a}t Berlin
\and Peter Imkeller
\\ Institut f{\"u}r Mathematik \\ Humboldt-Universit{\"a}t zu Berlin 
\and David J. Pr{\"o}mel\thanks{D.J.P. gratefully acknowledges the support by the DFG Research Training Group 1845 ``Stochastic Analysis with Applications in Biology, Finance and Physics'' and by the Swiss National Foundation (Grand No. 200021$\_$163014).}
\\ Department of Mathematics \\ ETH Z\"urich}
\begin{document}

\maketitle

\begin{abstract}
  We solve the Skorokhod embedding problem for a class of Gaussian processes including Brownian motion with non-linear drift. Our approach relies on solving an associated strongly coupled system of Forward Backward Stochastic Differential Equations (FBSDEs), and investigating the regularity of the obtained solution. For this purpose we extend the existence, uniqueness and regularity theory of so called decoupling fields for Markovian FBSDE to a setting in which the coefficients are only locally Lipschitz continuous.
\end{abstract}

\noindent\emph{MSC 2010:} Primary: 60G40, 60H30; Secondary: 93E20. \smallskip\\
\emph{Key words and phrases: BMO process, BSDE, Decoupling field, Forward backward stochastic differential equation, FBSDE, Skorokhod embedding, Variational differentiation.}

% Legende
% -------
% 60G40 - Stopping times; optimal stopping problems; gambling theory
% 60H30 - Applications of stochastic analysis (to PDE, etc.)
% 93H20 - Optimal stochastic control

\section{Introduction}

The Skorokhod embedding problem (SEP) stimulates research in probability theory now for over 50 years. The classical goal of the SEP consists in finding, for a given Brownian motion $W$ and a probability measure $\nu$, a stopping time $\tau$ such that $W_\tau$ possesses the law $\nu$. It was first formulated and solved by Skorokhod \cite{Skorokhod1961,Skorokhod1965} in 1961. Since then there appeared many different constructions for the stopping time $\tau$ and generalizations of the original problem in the literature. Just to name some of the most famous solutions to the SEP we refer to Root \cite{Root1969}, Rost \cite{Rost1971} and Az\'ema-Yor \cite{Azema1979}. A comprehensive survey can be found in \cite{Obloj2004}.

Recently, the Skorokhod embedding raised additional interest because of its new applications in financial mathematics, as for instance to obtain model-independent bounds on lookback options \cite{Hobson1998} or on options on variance \cite{Carr2010,Cox2013,Oberhauser2013}. An introduction to this close connection of the Skorokhod embedding problem and robust financial mathematics can be found in \cite{Hobson2011}. 

In this paper we construct a solution to the Skorokhod embedding problem for Gaussian process $G$ of the form
\begin{equation*}
  G_t:=G_0+\int_{0}^t\alpha_s\dx s+\int_{0}^t\beta_s\dx W_s,
\end{equation*}
where $G_0\in\mathbb{R}$ is a constant and $\alpha,\beta\colon[0,\infty)\to\mathbb{R}$ are suitable functions. Especially, this class of processes includes  Brownian motions with non-linear drift. The SEP for Brownian motion with linear drift was first solved in the technical report \cite{Hall1968} and 30 years later again in \cite{Grandits2000} and \cite{Peskir2000}. Techniques developed in these works can be extended to time-homogeneous diffusions, as done in \cite{Pedersen2001}, and can be seen as generalization of the Az\'ema-Yor solution. However, to the best of our knowledge there exists no solution so far for the case of a Brownian motion with non-linear drift.

The spirit of our approach is related to the one by Bass \cite{Bass1983}, who employed martingale representation to find an alternative solution of the SEP for the Brownian motion. This approach was further developed for the Brownian motion with linear drift in \cite{Ankirchner2008} and for time-homogeneous diffusion in \cite{Ankirchner2015}. It rests upon the observation that the SEP may be viewed as the weak version of a stochastic control problem: the goal is to steer $G$ in such a way that it takes the distribution of a prescribed law, which in case of zero drift is closely related to the martingale representation of a random variable with this law. We therefore propose in this paper to formulate and solve the SEP for $G$ in terms of a fully coupled Forward Backward Stochastic Differential Equation (FBSDE).

In general terms, the dynamics of a system of FBSDE is expressed by the equations
\begin{align*}
  X_s&=X_{0}+\int_{0}^s\mu(r,X_r,Y_r,Z_r)\dx r+\int_{0}^s\sigma(r,X_r,Y_r,Z_r)\dx W_r,\\
  Y_t&=\xi(X_T)-\int_{t}^{T}f(r,X_r,Y_r,Z_r)\dx r-\int_{t}^{T}Z_r\dx W_r,\quad t\in[0,T],
\end{align*}
with coefficient functions $\mu, \sigma$ of the forward part, terminal condition $\xi$ and driver $f$ of the backward component. In recent decades the theory of FBSDE with its close connection to quasi-linear partial differential equations and their viscosity solutions has been propagated extensively, in particular in its numerous areas of applications as stochastic control and mathematical finance (see \cite{ElKaroui1997} or \cite{Peng1999}).

There are mainly three methods to show the existence of a solution for a system of FBSDE: the \emph{contraction method} \cite{Antonelli1993,Pardoux1999}, the \emph{four step scheme} \cite{Ma1994} and the \emph{method of continuation} \cite{Hu1995,Yong1997,Ma1999}. As a unified approach, \cite{Ma2015} (see also \cite{Delarue2002}) designed the theory of decoupling fields for FBSDE, which was refined and extended to a multidimensional setting in \cite{Fromm2013,Fromm2015}. It can be seen as an extension of the contraction method. In our approach of the SEP via FBSDE, we shall focus on the subclass of Markovian ones for which all involved coefficient functions $(\xi,(\mu,\sigma,f))$ are deterministic. We, however, have to allow for not globally, but only locally Lipschitz continuous coefficients $(\mu,\sigma,f)$ in the control variable $z$, and therefore to develop an existence, uniqueness and regularity theory for FBSDE in this framework. 

Equipped with these tools we solve the FBSDE system resulting from the SEP. We first construct a weak solution, i.e. we obtain a Gaussian process of the above form and an integrable random time such that, stopped at this time, the process possesses the given distribution $\nu$. Under suitable regularity on the given measure $\nu$ and the process, this construction will be carried over to the originally given Gaussian process $G$. This solves the SEP for $G$. 

The paper is organized as follows: in Section \ref{sec:SEP} we relate the SEP to a fully coupled system of FBSDE, and in Section \ref{sec:dec} we establish general results for decoupling fields of FBSDE. The Skorokhod embedding problem is solved in Section \ref{sec:solving}, in its weak and in its strong version. Section \ref{sec:appendix} recalls some auxiliary results for BMO processes.

\section{An FBSDE approach to the Skorokhod embedding problem}\label{sec:SEP}

We consider a filtered probability space $(\Omega, \mathcal{F}, (\mathcal{F}_t)_{t \in [0,\infty)},\mathbb{P})$ large enough to carry a one-dimensional Brownian motion $W$ and with $\mathcal{F}:=\sigma\left(\bigcup_{t=0}^\infty\mathcal{F}_t\right)$. The filtration $(\mathcal{F}_t)$ is assumed to be generated by the Brownian motion and to be augmented by $\mathbb{P}$-null sets. 

Let us start by formulating the \textit{Skorokhod embedding problem} \textbf{(SEP)} in the modified version: For a given probability measure $\nu$ on $\mathbb{R}$ and a Gaussian process $G$ on $[0,\infty)$ of the form
\begin{equation}\label{eq:gaussian}
  G_t:=G_0+\int_{0}^t\alpha_s\dx s+\int_{0}^t\beta_s\dx W_s,
\end{equation}
where $G_0\in\mathbb{R}$ is a constant and $\alpha,\beta\colon[0,\infty)\to\mathbb{R}$ are deterministic measurable processes such that $\int_0^t|\alpha_s|\dx s+\int_0^t\beta^2_s\dx s<\infty$ for all $t\geq 0$, find an integrable $(\mathcal{F}_t)$-stopping time $\tau$ together with a starting point $c\in\mathbb{R}$ such that $c+G_\tau$ has the law $\nu$.

In order to have a truly stochastic problem $\beta$ should not vanish and $\nu$ should not be a Dirac measure. In fact, we will assume that $\beta$ is bounded away from zero later on.

Our method of solving this problem is based on the observation that it may be viewed as the weak version of a \textit{stochastic control problem}: We want to steer $G$ in such a way that it takes the distribution of a prescribed law. The spirit of our approach is related to an approach to the original Skorokhod embedding problem by Bass \cite{Bass1983} that was later extended to the Brownian motion with linear drift in \cite{Ankirchner2008}. The procedure of both papers can be briefly summarized and divided into the following four steps.

\begin{enumerate}
  \item Construct a function $g \colon \mathbb{R} \to \mathbb{R}$ such that $g(W_1)$ has the given law $\nu$.
  \item Use the martingale representation property of the Brownian motion for $\alpha \equiv 0$ and $\beta\equiv 1$ or BSDE techniques for $\alpha\equiv \kappa \neq 0$ and $\beta\equiv 1$ to solve
       \begin{equation}\label{eq:bsde}
         Y_t = g(W_1) - \kappa \int_t^1 Z^2_s \dx s - \int_t^1 Z_s \dx W_s, \quad t\in [0,1].
       \end{equation}
  \item Apply the random time-change of Dambis, Dubins and Schwarz in the quadratic variation scale $\int_0^. Z_s^2\dx s$ to transform the martingale $\int_0^. Z_s \dx W_s$ into a Brownian motion $B$. This also provides a random time $\tilde \tau := \int_0^1 Z_s^2\dx s$ fulfilling $B_{\tilde \tau} + \kappa \tilde \tau+Y_0= g(W_1) $, which is why $B_{\tilde \tau}+ \kappa \tilde \tau+Y_0$ has the law $\nu$.
  \item Show that $\tilde\tau$ is a stopping time with respect to the filtration generated by $B$ through an explicit characterization using the unique solution of an ordinary differential equation. With this description transform the embedding with respect to $B$ into one with respect to the original Brownian motion $W$ to obtain the stopping time $\tau$ as the analogue to $\tilde{\tau}$.
\end{enumerate}

The {\bf first step} of the algorithm just sketched is fairly easy. Let $F \colon \mathbb{R} \to [0,1]$ such that $F(x):= \nu((-\infty, x]) $ is the cumulative distribution function associated with $\nu$ and define $F^{-1}\colon(0,1)\rightarrow\mathbb{R}$ via $F^{-1}(y):= \inf \{x \in \mathbb{R}\, :\, F (x) \geq y \}$. Denoting by $\Phi$ the distribution function of the standard normal distribution, we define $g \colon \mathbb{R} \to \mathbb{R}$ by $g (x):= F^{-1} (\Phi(x))$. It is straightforward to prove that $g$ has the following properties.

\begin{lemma}
  The function $g$ is measurable and non-decreasing. Moreover, if $\nu$ is not a Dirac measure, then $g$ is not identically constant and $g(W_1)$ has the law $\nu$.
\end{lemma}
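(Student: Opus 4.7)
The plan is to handle the three assertions separately, all of them following from standard facts about quantile functions of probability measures on $\mathbb{R}$.

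First I would verify monotonicity and measurability of $g=F^{-1}\circ\Phi$. The standard normal distribution function $\Phi$ is continuous and strictly increasing, hence Borel measurable and non-decreasing. For the generalized inverse $F^{-1}$, given by $F^{-1}(y):=\inf\{x\in\mathbb{R}\,:\,F(x)\geq y\}$, I would recall the familiar fact that it is non-decreasing on $(0,1)$: if $y_1\leq y_2$ then the set over which we take the infimum shrinks, so the infimum increases. Moreover $F^{-1}$ is left-continuous (one verifies $F^{-1}(y_n)\uparrow F^{-1}(y)$ for $y_n\uparrow y$), which in particular yields Borel measurability. The composition $g=F^{-1}\circ\Phi$ is therefore non-decreasing and Borel measurable.

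Next I would identify the law of $g(W_1)$. Since $W_1$ is standard normal, $\Phi(W_1)$ is uniformly distributed on $(0,1)$ by the probability integral transform. It then remains to invoke the classical quantile lemma: for a uniform random variable $U$ on $(0,1)$, $F^{-1}(U)$ has distribution function $F$, because for every $x\in\mathbb{R}$ the key equivalence $F^{-1}(y)\leq x\Leftrightarrow y\leq F(x)$ holds (this in turn follows from right-continuity and non-decrease of $F$ combined with the definition of $F^{-1}$), so that $\mathbb{P}(F^{-1}(U)\leq x)=\mathbb{P}(U\leq F(x))=F(x)$. Applied with $U=\Phi(W_1)$ this gives that $g(W_1)$ has law $\nu$.

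Finally I would argue the non-triviality of $g$ by contraposition. Since $\Phi\colon\mathbb{R}\to(0,1)$ is a bijection, $g$ is constant on $\mathbb{R}$ if and only if $F^{-1}$ is constant on $(0,1)$, say equal to some $c\in\mathbb{R}$. From the defining equivalence $F^{-1}(y)\leq x\Leftrightarrow y\leq F(x)$ one reads off that $F(x)=0$ for $x<c$ and $F(x)=1$ for $x\geq c$, so $\nu=\delta_c$. Contrapositively, if $\nu$ is not a Dirac measure then $g$ is not identically constant.

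None of the steps presents a genuine obstacle; the only care needed is the handling of the quantile function, for which the equivalence $F^{-1}(y)\leq x\Leftrightarrow y\leq F(x)$ is the single workhorse that yields both the distributional identity and the Dirac characterization.
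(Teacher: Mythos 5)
Your proof is correct and takes essentially the same approach as the paper: both arguments reduce the distributional claim and the Dirac characterization to the quantile equivalence $F^{-1}(y)\leq x\Leftrightarrow y\leq F(x)$ together with the fact that $\Phi(W_1)$ is uniform on $(0,1)$. You spell out the intermediate steps (left-continuity of $F^{-1}$, probability integral transform, quantile lemma) a bit more explicitly, while the paper compresses the CDF computation into a single line using $\Phi^{-1}$, but the content is identical.
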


\begin{proof}
  Since $\Phi$ and $F^{-1}$ are measurable and non-decreasing, their composition $g$ is also measurable and non-decreasing. 
  
  Clearly, $g$ can only be constant if $F^{-1}$ is constant, which can only happen if $F$ assumes values in $\{0,1\}$. This only happens in case $\nu$ is a Dirac measure. In order to see that $g(W_1)$ has the law $\nu$, note that $\mathbb{P}(g(W_1) \leq x ) = \mathbb{P}(W_1\leq \Phi^{-1}(F(x)))= \Phi(\Phi^{-1}(F(x)))=F(x)$ for $x \in \mathbb{R}$.
\end{proof}

Since we want to require as little regularity as possible for the processes involved, we use the concept of weak differentiability. We recall that a measurable $f\colon \Omega \times \mathbb{R}^n\to \mathbb{R}$ is \textit{weakly differentiable} if there exists a mapping $\frac{\dx}{\dx\lambda}f\colon\Omega \times \mathbb{R}^n\to\mathbb{R}^{1\times n}$ such that 
\begin{equation*}
\int_{\mathbb{R}^n}\varphi(\lambda)\frac{\dx}{\dx\lambda}f(\omega,\lambda)\dx\lambda=
-\int_{\mathbb{R}^n}f(\omega,\lambda)\frac{\dx}{\dx\lambda}\varphi(\lambda)\dx\lambda,
\end{equation*}
for any smooth test function $\varphi\colon\mathbb{R}^n\to\mathbb{R}$ with compact support, for almost all $\omega \in \Omega$.

Now we define a measurable function $\hat\delta\colon [0,\infty)\to\mathbb{R}$ via $\hat\delta(t):=G_0+\int_0^t\alpha_s\dx s$ such that $X_t=\hat\delta(t)+\int_0^t\beta_s\dx W_s$. Obviously, $\hat\delta$ is weakly differentiable (if setting it to $G_0=\hat\delta(0)$ outside of $[0,\infty)$). Conversely, for every weakly differentiable function $\hat\delta\colon[0,\infty)\to\mathbb{R}$ we can set $G_0:=\hat\delta(0)$ and $\alpha_s:=\hat\delta'(s)$.

Furthermore, define $H\colon [0,\infty)\to[0,\infty)$ via $ H(t):=\int_0^t\beta^2_s\dx s$. Note that $H$ is weakly differentiable, monotonically increasing and starts at $0$. If we assume that $\beta$ is bounded away from $0$, $H$ becomes strictly increasing and invertible such that the inverse function $H^{-1}$ is monotonically increasing and Lipschitz continuous. In this case we can define $\delta:=\hat\delta\circ H^{-1}$. Notice, if $\beta\equiv 1$, then $H=\mathrm{Id}$ and thus $\delta=\hat\delta$.

For the {\bf second step} we assume that $\beta$ is bounded away from $0$ and observe that the random time change, which turns the martingale $\int_0^\cdot Z_s \dx W_s$ into a Gaussian process of the form $\int_{0}^\cdot\beta_s\dx B_s$ simultaneously turns the scale process $\int_0^. Z_s^2 \dx s$ into $\int_0^\cdot \beta^2_s\dx s=H$. This means we have to modify the classical martingale representation of $g(W_1)$ to
\begin{equation*}
  g(W_1) + \hat\delta \bigg(H^{-1}\bigg(\int_0^1 Z^2_s \dx s \bigg)\bigg) - \mathbb{E}\bigg[g(W_1) + \hat\delta \bigg(H^{-1}\bigg(\int_0^1 Z^2_s \dx s \bigg)\bigg) \bigg] =  \int_0^1 Z_s \dx W_s,
\end{equation*}
which amounts to finding a solution $(Y,Z)$ to the equation
\begin{equation}\label{eq:fbsde}
  Y_t = g(W_1) - \delta\bigg(\int_0^1 Z_s^2 \dx s\bigg) - \int_t^1 Z_s \dx W_s,  \quad t \in [0,1].
\end{equation}
For $\delta(t) \equiv 0$ this would be just the usual martingale representation with respect to the Brownian motion. Also for a linear drift
$\delta(t) = \kappa t$ and $\beta\equiv 1$ equation \eqref{eq:fbsde} can be rewritten as
\begin{equation*}
 \tilde Y_t:= Y_t + \kappa \int_0^t Z_s^2 \dx s = g(W_1) - \kappa  \int_t^1 Z_s^2 \dx s  - \int_t^1 Z_s \dx W_s, \quad t \in [0,1],
\end{equation*}
which is exactly the BSDE \eqref{eq:bsde} related to the SEP as stated in \cite{Ankirchner2008}. In the case of a Brownian motion with general drift equation \eqref{eq:fbsde} would be a BSDE with time-delayed terminal condition. Unfortunately, the theory of BSDE with time-delay as introduced by Delong and Imkeller \cite{Delong2010} and extended by Delong \cite{Delong2012} for time-delayed terminal conditions reaches its limits in our situation. Alternatively, we will understand equation \eqref{eq:fbsde} as an FBSDE and develop new techniques to solve it. This will be done in Section \ref{sec:dec} and \ref{sec:solving}.

Before we tackle the solvability of equation \eqref{eq:fbsde}, we show that it really leads to the desired result in the {\bf third step} of our algorithm. To be mathematically rigorous we introduce the space $\mathbb{S}^2(\mathbb{R})$ of all progressively measurable processes $Y \colon \Omega \times [0,1] \to \mathbb{R}$ satisfying $\sup_{t \in [0,1]}\mathbb{E}[ \vert Y_t \vert^2 ] <\infty$, and the space $\mathbb{H}^2(\mathbb{R})$ of all progressively measurable processes $Z \colon \Omega \times [0,1] \to \mathbb{R}$ satisfying $\mathbb{E}[\int_0^1 \vert Z_t \vert^2 \dx t] <\infty$, where $\vert \cdot \vert $ denotes the Euclidean norm on $\mathbb{R}$.

In the entire paper we assume that $\beta$ is {\bf bounded away from $0$}, i.e. $\inf_{s\in[0,\infty)}|\beta_s|>0$.

\begin{lemma}\label{lem:weak}
  Suppose that $(Y,Z)\in \mathbb{S}^2(\mathbb{R}) \times \mathbb{H}^2(\mathbb{R})$ is a solution of \eqref{eq:fbsde}.
  Then there exist a Brownian motion $B$ and a random time $\tilde \tau$ with $\mathbb{E}[\tilde \tau] < \infty$
  such that
  \begin{equation*}
    Y_0+G_0+\int_0^{\tilde \tau}\alpha_s \dx s+\int_0^{\tilde\tau}\beta_s\dx B_s = g(W_1).
  \end{equation*}
\end{lemma}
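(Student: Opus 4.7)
The plan is to identify $\tilde{\tau}$ as the random time obtained by rescaling $\int_0^1 Z_s^2\,\mathrm{d}s$ through $H$, and to obtain $B$ from the Dambis--Dubins--Schwarz (DDS) Brownian motion of the martingale $M_t:=\int_0^t Z_s\,\mathrm{d}W_s$ by a further deterministic time change that accounts for $\beta$. The key algebraic observation driving everything is the identity $\delta(u)=\hat\delta(H^{-1}(u))=G_0+\int_0^{H^{-1}(u)}\alpha_s\,\mathrm{d}s$, so if we set $\tilde\tau:=H^{-1}\!\left(\int_0^1 Z_s^2\,\mathrm{d}s\right)$, then evaluating \eqref{eq:fbsde} at $t=0$ gives
\begin{equation*}
g(W_1)=Y_0+\delta\!\left(\int_0^1 Z_s^2\,\mathrm{d}s\right)+\int_0^1 Z_s\,\mathrm{d}W_s = Y_0+G_0+\int_0^{\tilde\tau}\alpha_s\,\mathrm{d}s+M_1,
\end{equation*}
so it only remains to write $M_1$ in the form $\int_0^{\tilde\tau}\beta_s\,\mathrm{d}B_s$ for some Brownian motion $B$.

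For integrability of $\tilde\tau$ I would use that $H$ is strictly increasing with $H'=\beta^2$ bounded below by some $c>0$, hence $H^{-1}$ is Lipschitz with constant $1/c$. Since $Z\in\mathbb H^2(\mathbb R)$, this yields $\mathbb E[\tilde\tau]\leq c^{-1}\mathbb E\!\left[\int_0^1 Z_s^2\,\mathrm{d}s\right]<\infty$.

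Next, I would apply DDS to the continuous martingale $M$, possibly after enlarging the probability space by an independent Brownian motion to handle the case where $\langle M\rangle_\infty=\int_0^1 Z_s^2\,\mathrm{d}s$ is bounded. This produces a Brownian motion $\tilde B$ (with respect to the time-changed filtration) such that $M_1=\tilde B_{\langle M\rangle_1}=\tilde B_{H(\tilde\tau)}$. Since $\beta$ is bounded away from $0$, the deterministic function $H$ is a $C^0$ strictly increasing bijection, so I can define
\begin{equation*}
B_t:=\int_0^t \frac{1}{\beta_s}\,\mathrm{d}\bigl(\tilde B_{H(\cdot)}\bigr)_s.
\end{equation*}
The process $s\mapsto\tilde B_{H(s)}$ is a continuous martingale with quadratic variation $H(s)=\int_0^s\beta_r^2\,\mathrm{d}r$, so $B$ is a continuous martingale with $\langle B\rangle_t=\int_0^t \beta_s^{-2}\,\mathrm{d}\langle\tilde B_{H(\cdot)}\rangle_s=t$, hence a Brownian motion by L\'evy's characterisation. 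By the very definition of $B$ one has $\int_0^t\beta_s\,\mathrm{d}B_s=\tilde B_{H(t)}$, and specializing to $t=\tilde\tau$ gives $\int_0^{\tilde\tau}\beta_s\,\mathrm{d}B_s=\tilde B_{H(\tilde\tau)}=M_1$, which combined with the displayed identity yields the claim.

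The main technical point I would be careful about is the construction and justification of the intermediate time-changed Brownian motion $\tilde B$: DDS in its cleanest form requires either strict monotonicity of $\langle M\rangle$ or extension of the probability space, and one must make sure the filtration with respect to which $\tilde B$ (and then $B$) is a Brownian motion is large enough that $\int_0^{\tilde\tau}\beta_s\,\mathrm{d}B_s$ is a well-defined stochastic integral. Once $\beta$ is bounded away from $0$, the deterministic time change by $H$ is routine and introduces no measurability issues, so the argument reduces to a careful bookkeeping of filtrations.
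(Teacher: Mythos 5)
Your argument is correct and takes essentially the same route as the paper: identify $\tilde\tau=H^{-1}(\int_0^1 Z_s^2\,\mathrm{d}s)$, use the Lipschitz continuity of $H^{-1}$ for integrability, and produce $B$ by a Dambis--Dubins--Schwarz time change combined with the deterministic time change $H$, after enlarging the space by an independent Brownian motion. The only (cosmetic) difference is that you perform DDS first and then compose with $H$, whereas the paper defines the time change $\sigma_r$ directly in the $\beta^2$-scale and appends the independent Brownian motion at the level of $B$ rather than at the level of $\tilde B$; the two constructions are readily seen to coincide.
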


\begin{proof}
  Note that $Y$ is a martingale with quadratic variation process $\int_0^t Z^2_s \dx s$ for $t \in [0,1]$ since  $Z \in \mathbb{H}^2(\mathbb{R})$. Now choose another Brownian motion $\tilde B$ which is independent of $Y$. If necessary we extend our probability space such that it accommodates the Brownian motion $\tilde B$. Set $\tilde \tau:= H^{-1}\big(\int_0^1 Z^2_s \dx s\big)$, and define the time-change of the type of Dambis, Dubins and Schwarz by
  \begin{equation*}
    \sigma_r := \begin{cases}
                      \inf \left\{ t \geq 0 \,:\, \int_0^t Z^2_s \dx s  >\int_0^r \beta^2_s \dx s \right\}, & \text{if } 0 \leq r < \tilde\tau, \\
                      1, & \text{if } r \geq \tilde\tau.
                    \end{cases}
  \end{equation*}
  Observe that the condition $r<\tilde\tau$ is equivalent to $\int_0^r \beta^2_s \dx s<\int_0^1 Z^2_s \dx s$. Since $Y_{\sigma_r}$ is a continuous martingale with quadratic variation $H(r)=\int_0^r \beta^2_s \dx s$, we can define a Brownian motion $B$ by
  \begin{equation*}
    B_r:= \tilde B_r-\tilde B_{r \wedge \tilde\tau}+\int_0^{r\wedge\tilde\tau}\frac{1}{\beta_s}\dx Y_{\sigma_s}, \quad 0 \leq r < \infty.
  \end{equation*}
  We find
  \begin{equation*}
    \int_0^{\tilde\tau}\beta_s\dx B_s +\hat\delta(\tilde\tau)+Y_0 = Y_1-Y_0 + \delta\bigg(\int_0^1 Z^2_s\dx s\bigg) + Y_0 = g(W_1),
  \end{equation*}
  and further $\mathbb{E}[\tilde\tau]= \mathbb{E}\big[H^{-1}\big(\int_0^1 Z^2_s \dx s\big)\big] < \infty$,
  where we used that $Z \in \mathbb{H}^2(\mathbb{R})$ and $H^{-1}$ is Lipschitz continuous.
\end{proof}

As an immediate consequence of the previous lemma we observe the following fact: If we have a solution $(Y,Z)\in \mathbb{S}^2(\mathbb{R}) \times \mathbb{H}^2(\mathbb{R})$ of equation \eqref{eq:fbsde}, we obtain a \textit{weak solution} to the Skorokhod embedding problem, i.e. a Gaussian process of the form \eqref{eq:gaussian}, a starting point $c$, and an integrable random time such that our process stopped at this time possesses a given distribution.

At a first glance equation \eqref{eq:fbsde} might look easy. We, however, have to deal with a fully coupled FBSDE which in addition possesses a not globally Lipschitz continuous coefficient in the forward component.

\section{Decoupling fields for fully coupled FBSDEs}\label{sec:dec}

The theory of FBSDEs, closely connected to the theory of quasi-linear partial differential equations and their viscosity solutions, receives its general interest from numerous areas of application among which stochastic control and mathematical finance are the most vivid ones in recent decades (see \cite{ElKaroui1997} or \cite{Peng1999}). Owing to their general significance, we treat the theory of FBSDEs and their decoupling fields in a more general framework than might be needed to obtain a solution to our equation \eqref{eq:fbsde}. 

Although in Section \ref{sec:markovian} we will focus on the Markovian case, which means that all involved coefficients are purely deterministic, let us dwell in a more general setting first.

\subsection{General decoupling fields}

For a fixed finite time horizon $T>0$, we consider a complete filtered probability space $(\Omega,\mathcal{F},(\mathcal{F}_t)_{t\in[0,T]},\mathbb{P})$, where $\mathcal{F}_0$ contains all null sets, $(W_t)_{t\in[0,T]}$ is a $d$-dimensional Brownian motion independent of $\mathcal{F}_0$, and $\mathcal{F}_t:=\sigma(\mathcal{F}_0,(W_s)_{s\in [0,t]})$ with $\mathcal{F}:=\mathcal{F}_T$. The dynamics of an FBSDE is classically given by
\begin{align*}
  X_s&=X_{0}+\int_{0}^s\mu(r,X_r,Y_r,Z_r)\dx r+\int_{0}^s\sigma(r,X_r,Y_r,Z_r)\dx W_r,\\
  Y_t&=\xi(X_T)-\int_{t}^{T}f(r,X_r,Y_r,Z_r)\dx r-\int_{t}^{T}Z_r\dx W_r,
\end{align*}
for $s,t \in [0,T]$ and $X_0 \in \mathbb{R}^n$, where $(\xi,(\mu,\sigma,f))$ are measurable functions such that 
\begin{align*}
  \xi &\colon\Omega\times\mathbb{R}^n \to \mathbb{R}^m, &
  \mu &\colon [0,T]\times\Omega\times\mathbb{R}^n\times\mathbb{R}^m\times\mathbb{R}^{m\times d}\to \mathbb{R}^n,\\
  \sigma&\colon [0,T]\times\Omega\times\mathbb{R}^n\times\mathbb{R}^m\times\mathbb{R}^{m\times d}\to \mathbb{R}^{n\times d},&
  f&\colon [0,T]\times\Omega\times\mathbb{R}^n\times\mathbb{R}^m\times\mathbb{R}^{m\times d}\to \mathbb{R}^m,
\end{align*}
for $d,n,m\in\mathbb{N}$. Throughout the whole section $\mu$, $\sigma$ and $f$ are assumed to be progressively measurable with respect to $(\mathcal{F}_t)_{t\in[0,T]}$.

A decoupling field comes with an even richer structure than just a classical solution.

\begin{definition}\label{def:decoupling field}
  Let $t\in[0,T]$. A function $u\colon [t,T]\times\Omega\times\mathbb{R}^n\to\mathbb{R}^m$ with $u(T,\cdot)=\xi$ a.e. is called \emph{decoupling field} for $\fbsde (\xi,(\mu,\sigma,f))$ on $[t,T]$ if for all $t_1,t_2\in[t,T]$ with $t_1\leq t_2$ and any $\mathcal{F}_{t_1}$-measurable $X_{t_1}\colon\Omega\to\mathbb{R}^n$ there exist progressively measurable processes $(X,Y,Z)$ on $[t_1,t_2]$ such that
  \begin{align}\label{eq:decoupling}
    X_s&=X_{t_1}+\int_{t_1}^s\mu(r,X_r,Y_r,Z_r)\dx r+\int_{t_1}^s\sigma(r,X_r,Y_r,Z_r)\dx W_r,&\nonumber\\
    Y_s&=Y_{t_2}-\int_{s}^{t_2}f(r,X_r,Y_r,Z_r)\dx r-\int_{s}^{t_2}Z_r\dx W_r,&
    Y_s&=u(s,X_s),
  \end{align}
  for all $s\in[t_1,t_2]$. In particular, we want all integrals to be well-defined.
\end{definition}

Some remarks about this definition are in place.
\begin{itemize}
  \item The first equation in \eqref{eq:decoupling} is called the \emph{forward equation}, the second the \emph{backward equation} and the third will be referred to as the \emph{decoupling condition}.
  \item Note that, if $t_2=T$, we get $Y_T=\xi(X_T)$ a.s. as a consequence of the decoupling condition together with $u(T,\cdot)=\xi$. At the same time $Y_T=\xi(X_T)$ together with decoupling condition implies $u(T,\cdot)=\xi$ a.e.
  \item If $t_2=T$ we can say that a triplet $(X,Y,Z)$ solves the FBSDE, meaning that it satisfies the forward and the backward equation, together with $Y_T=\xi(X_T)$. This relationship $Y_T=\xi(X_T)$ is referred to as the \emph{terminal condition}. 
\end{itemize}

In contrast to classical solutions of FBSDEs, decoupling fields on different intervals can be pasted together.

\begin{lemma}[\cite{Fromm2015}, Lemma 2.1.2]\label{glue}
  Let $u$ be a decoupling field for $\fbsde (\xi,(\mu,\sigma,f))$ on $[t,T]$ and $\tilde{u}$ be a decoupling field for $\fbsde (u(t,\cdot),(\mu,\sigma,f))$ on $[s,t]$, for $0\leq s<t<T$. Then, the map $\hat{u}$ given by $\hat{u}:=\tilde{u}\mathbf{1}_{[s,t]}+u\mathbf{1}_{(t,T]}$ is a decoupling field for $\fbsde (\xi,(\mu,\sigma,f))$ on $[s,T]$.
\end{lemma}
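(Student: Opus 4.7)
The plan is to verify the three defining properties of Definition~\ref{def:decoupling field} directly for $\hat{u}$ on $[s,T]$. The identity $\hat{u}(T,\cdot)=u(T,\cdot)=\xi$ a.e.\ is immediate from the definition of $\hat{u}$ and from $u$ being a decoupling field. The real work is the decoupling-field property: fix $t_1\leq t_2$ in $[s,T]$ and an $\mathcal{F}_{t_1}$-measurable initial value $X_{t_1}$, and split into three cases depending on where $[t_1,t_2]$ sits relative to the gluing point $t$.

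If $t\leq t_1\leq t_2\leq T$, then $\hat{u}$ coincides with $u$ on $[t_1,t_2]$, and the required triple $(X,Y,Z)$ is supplied directly by the decoupling-field property of $u$ on $[t,T]$. Symmetrically, if $s\leq t_1\leq t_2\leq t$, then $\hat{u}$ coincides with $\tilde{u}$ on $[t_1,t_2]$ (using that $t\in[s,t]$, so $\hat{u}(t,\cdot)=\tilde{u}(t,\cdot)$), and the triple is supplied by $\tilde{u}$.

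The substantial case is $s\leq t_1\leq t\leq t_2\leq T$. Here I would first apply the decoupling property of $\tilde{u}$ on the subinterval $[t_1,t]$ with initial condition $X_{t_1}$, producing processes $(X',Y',Z')$ on $[t_1,t]$ that solve the forward and backward equations and satisfy $Y'_r=\tilde{u}(r,X'_r)$. The endpoint $X'_t$ is $\mathcal{F}_t$-measurable, so I can feed it as the initial value into the decoupling property of $u$ on $[t,t_2]$, obtaining $(X'',Y'',Z'')$ on $[t,t_2]$ with $Y''_r=u(r,X''_r)$ and in particular $Y''_t=u(t,X'_t)$. The crucial compatibility at $t$ comes from the hypothesis that $\tilde{u}$ is a decoupling field for the FBSDE with terminal condition $u(t,\cdot)$, which forces $\tilde{u}(t,\cdot)=u(t,\cdot)$ a.e., hence $Y'_t=\tilde{u}(t,X'_t)=u(t,X'_t)=Y''_t$ a.s. I would then concatenate by setting $X:=X'\mathbf{1}_{[t_1,t]}+X''\mathbf{1}_{(t,t_2]}$ and analogously for $Y$ and $Z$; the agreement at $r=t$ makes $(X,Y)$ continuous across the interface, so the forward and backward integral equations on $[t_1,t_2]$ follow by summing the two half-equations (the Lebesgue and stochastic integrals split additively at $r=t$). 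The decoupling identity $Y_r=\hat{u}(r,X_r)$ holds on each half directly from the construction of $\hat{u}$.

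The only nontrivial step is the matching $Y'_t=Y''_t$ at the interface, which, as noted, is precisely what the hypothesis on the terminal value of $\tilde{u}$ delivers. Everything else, namely progressive measurability of the concatenated triple and well-definedness of the involved integrals, is inherited from the two pieces.
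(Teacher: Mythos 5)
Your proposal is correct and follows the natural concatenation argument that [Fromm2015, Lemma~2.1.2] uses (the paper cites the result without reproducing the proof). The case split at the gluing time $t$, the matching $Y'_t=Y''_t$ via $\tilde{u}(t,\cdot)=u(t,\cdot)$ a.e., and the additive splitting of the forward and backward integrals at $t$ are exactly the right ingredients; the only thing worth flagging is that equalities like $\hat{u}(t,\cdot)=u(t,\cdot)$ hold only a.e.\ rather than identically, which is harmless since, as the paper itself remarks, decoupling fields are considered up to modification.
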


We want to remark that, if $u$ is a decoupling field and $\tilde{u}$ is a modification of $u$, i.e. for each $s\in[t,T]$ the functions $u(s,\omega,\cdot)$ and $\tilde{u}(s,\omega,\cdot)$ coincide for almost all $\omega\in\Omega$, then $\tilde{u}$ is also a decoupling field to the same problem. Hence, $u$ could also be referred to as a class of modifications and a progressively measurable representative exists if the decoupling field is Lipschitz continuous in $x$ (Lemma 2.1.3 in \cite{Fromm2015}).

For the following we need to fix briefly further notation.

Let $I\subseteq [0,T]$ be an interval and $u: I\times\Omega\times\mathbb{R}^n\rightarrow \mathbb{R}^m$ a map such that $u(s,\cdot)$ is measurable for every $s\in I$. We define
\begin{equation*}
  L_{u,x}:=\sup_{s\in I}\inf\{L\geq 0\,|\,\textrm{for a.a. }\omega\in\Omega: |u(s,\omega,x)-u(s,\omega,x')|\leq L|x-x'|\textrm{ for all }x,x'\in\mathbb{R}^n\},
\end{equation*}
where $\inf \emptyset:=\infty$. We also set $ L_{u,x}:=\infty$ if $u(s,\cdot)$ is not measurable for every $s\in I$. One can show that $L_{u,x}<\infty$ is equivalent to $u$ having a modification which is truly Lipschitz continuous in $x\in\mathbb{R}^n$.

We denote by $L_{\sigma,z}$ the Lipschitz constant of $\sigma$ w.r.t. the dependence on the last component $z$ and w.r.t. the Frobenius norms on $\mathbb{R}^{m\times d}$ and $\mathbb{R}^{n\times d}$. We set $L_{\sigma,z}=\infty$ if $\sigma$ is not Lipschitz continuous in $z$. 

By $L_{\sigma,z}^{-1}=\frac{1}{L_{\sigma,z}}$ we mean $\frac{1}{L_{\sigma,z}}$ if $L_{\sigma,z}>0$ and $\infty$ otherwise.

For an integrable real valued random variable $F$ the expression $\mathbb{E}_t[F]$ refers to $\mathbb{E}[F|\mathcal{F}_t]$, while $\mathbb{E}_{\hat{t},\infty}[F]$ refers to $\esssup\,\mathbb{E}[F|\mathcal{F}_t]$, which might be $\infty$, but is always well defined as the infimum of all constants $c\in[-\infty,\infty]$ such that $\mathbb{E}[F|\mathcal{F}_t]\leq c$ a.s. Additionally, we write $\|F\|_\infty$ for the essential supremum of $|F|$.

In practice it is important to have explicit knowledge about the regularity of $(X,Y,Z)$. For instance, it is important to know in which spaces the processes live, and how they react to changes in the initial value. 

\begin{definition}\label{def:regularity decoupling}
  Let $u\colon [t,T]\times\Omega\times\mathbb{R}^n\to\mathbb{R}^m$ be a decoupling field to $\fbsde(\xi,(\mu,\sigma,f))$.
  \begin{enumerate}
   \item We say $u$ to be \emph{weakly regular} if $L_{u,x}<L_{\sigma,z}^{-1}$ and $\sup_{s\in[t,T]}\|u(s,\cdot,0)\|_{\infty}<\infty$.
   \item A weakly regular decoupling field $u$ is called \emph{strongly regular} if for all fixed $t_1,t_2\in[t,T]$, $t_1\leq t_2,$ the processes $(X,Y,Z)$ arising in \eqref{eq:decoupling}  are a.e unique and satisfy
   \begin{equation}\label{strongregul1}
      \sup_{s\in [t_1,t_2]}\mathbb{E}_{t_1,\infty}[|X_s|^2]+\sup_{s\in [t_1,t_2]}\mathbb{E}_{t_1,\infty}[|Y_s|^2]
      +\mathbb{E}_{t_1,\infty}\left[\int_{t_1}^{t_2}|Z_s|^2\dx s\right]<\infty,
   \end{equation}
   for each constant initial value $X_{t_1}=x\in\mathbb{R}^n$. In addition they are required to be measurable as functions of $(x,s,\omega)$ and even weakly differentiable w.r.t. $x\in\mathbb{R}^n$ such that for every $s\in[t_1,t_2]$ the mappings $X_s$ and $Y_s$ are measurable functions of $(x,\omega)$ and even weakly differentiable w.r.t. $x$ such that
   \begin{align}\label{strongregul2}
     &\esssup_{x\in\mathbb{R}^n}\sup_{v\in S^{n-1}}\sup_{s\in [t_1,t_2]}\mathbb{E}_{t_1,\infty}\left[\left|\frac{\dx}{\dx x}X_s\right|^2_v\right]<\infty, \nonumber\allowdisplaybreaks\\
     &\esssup_{x\in\mathbb{R}^n}\sup_{v\in S^{n-1}}\sup_{s\in [t_1,t_2]}\mathbb{E}_{t_1,\infty}\left[\left|\frac{\dx}{\dx x}Y_s\right|^2_v\right]<\infty, \nonumber\\
     &\esssup_{x\in\mathbb{R}^n}\sup_{v\in S^{n-1}}\mathbb{E}_{t_1,\infty}\left[\int_{t_1}^{t_2}\left|\frac{\dx}{\dx x}Z_s\right|^2_v\dx s\right]<\infty.
   \end{align}
   \item We say that a decoupling field on $[t,T]$ is \emph{strongly regular} on a subinterval $[t_1,t_2]\subseteq[t,T]$ if $u$ restricted to $[t_1,t_2]$ is a strongly regular decoupling field for $\fbsde(u(t_2,\cdot),(\mu,\sigma,f))$.
   \end{enumerate}
\end{definition}

Under suitable conditions a rich existence, uniqueness and regularity theory for decoupling fields can be developed. We will summarize the main results, which are proven in Chapter 2 of \cite{Fromm2015}:
\smallskip\\
{\bf Assumption (SLC):} $(\xi,(\mu,\sigma,f))$ satisfies \emph{standard Lipschitz conditions} \textup{(SLC)} if
\begin{enumerate}
  \item $(\mu,\sigma,f)$ are Lipschitz continuous in $(x,y,z)$ with Lipschitz constant $L$,
  \item $\left\|\left(|\mu|+|f|+|\sigma|\right)(\cdot,\cdot,0,0,0)\right\|_{\infty}<\infty$,
  \item $\xi\colon \Omega\times\mathbb{R}^n\to \mathbb{R}^m$ is measurable such that $\|\xi(\cdot,0)\|_{\infty}<\infty$ and $L_{\xi,x}<L_{\sigma,z}^{-1}$.
\end{enumerate}

\begin{thm}[\cite{Fromm2015}, Theorem 2.2.1]\label{locexist}
  Suppose $(\xi,(\mu,\sigma,f))$ satisfies \textup{(SLC)}. Then there exists a time $t\in[0,T)$ such that $\fbsde (\xi,(\mu,\sigma,f))$ has a unique (up to modification) decoupling field $u$ on $[t,T]$ with $L_{u,x}<L_{\sigma,z}^{-1}$ and $\sup_{s\in [t,T]}\|u(s,\cdot,0)\|_{\infty}<\infty$.
\end{thm}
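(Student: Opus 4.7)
The plan is to establish this local existence and uniqueness result by a Picard-type contraction argument on a sufficiently small time interval $[T-\eta,T]$, carried out on an appropriate space of Lipschitz candidate decoupling fields, and to identify the fixed point as the desired $u$.

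First I would fix constants $L^{\ast}\in(L_{\xi,x},L_{\sigma,z}^{-1})$ and $M^{\ast}>\|\xi(\cdot,0)\|_{\infty}$, both allowed by (SLC)(3), and introduce the candidate class $\mathcal{C}_{\eta}$ of progressively measurable $u:[T-\eta,T]\times\Omega\times\mathbb{R}^{n}\to\mathbb{R}^{m}$ with $u(T,\cdot)=\xi$, $L_{u,x}\leq L^{\ast}$, and $\sup_{s\in[T-\eta,T]}\|u(s,\cdot,0)\|_{\infty}\leq M^{\ast}$. I would equip $\mathcal{C}_{\eta}$ with a complete metric compatible with the uniform Lipschitz structure (for instance, the essential supremum of $|u_{1}-u_{2}|$ on compacts in $x$, weighted in time); the uniform Lipschitz bound in $x$ guarantees that the limit of a Cauchy sequence lies again in $\mathcal{C}_{\eta}$.

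Next I would define a solution map $\Phi\colon\mathcal{C}_{\eta}\to\mathcal{C}_{\eta}$ as follows. Given $u\in\mathcal{C}_{\eta}$ and an initial condition $X_{t_{1}}=x$ at time $t_{1}\in[T-\eta,T]$, the strict bound $L_{u,x}L_{\sigma,z}<1$ is the structural condition that makes the coupled system tractable: plugging in the ansatz $Y_{r}=u(r,X_{r})$, the implicit relation determining $Z_{r}$ admits a unique contractive resolution, and the resulting forward equation can then be solved by a standard $\mathbb{H}^{2}$-contraction on the triple $(X,Y,Z)$ using BSDE a priori estimates, Burkholder–Davis–Gundy, and Gronwall's lemma on the small horizon $\eta$. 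Setting $\Phi(u)(s,x):=Y_{s}^{s,x}$, the regularity results from \cite{Fromm2015} (Lemma 2.1.3, Kolmogorov's criterion applied to the flow $x\mapsto X^{s,x}$) provide a progressively measurable, Lipschitz-in-$x$ representative.

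The central task is to verify that $\Phi$ stabilises $\mathcal{C}_{\eta}$ and is a contraction for small $\eta$. For the Lipschitz bound I would differentiate the FBSDE variationally in the initial value $x$; the derivative processes $(\partial_{x}X,\partial_{x}Y,\partial_{x}Z)$ satisfy a linear FBSDE with coefficients bounded in terms of the Lipschitz constant $L$ of $(\mu,\sigma,f)$, terminal condition $\nabla\xi$ with $|\nabla\xi|\leq L_{\xi,x}$, and the decoupling constraint $\partial_{x}Y_{r}=\nabla u(r,X_{r})\partial_{x}X_{r}$. Standard linear FBSDE estimates yield $L_{\Phi(u),x}\leq L_{\xi,x}+C(L^{\ast},L_{\sigma,z})\eta\leq L^{\ast}$ once $\eta$ is small, the margin $L^{\ast}-L_{\xi,x}>0$ being indispensable. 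The sup-norm bound $\sup_{s}\|\Phi(u)(s,\cdot,0)\|_{\infty}\leq M^{\ast}$ follows from an analogous a priori estimate by taking $x=0$ and invoking the uniform bound on $(\mu,\sigma,f)$ at zero. For the contraction, given $u_{1},u_{2}\in\mathcal{C}_{\eta}$ with corresponding triples $(X^{i},Y^{i},Z^{i})$, standard stability estimates for coupled FBSDEs with Lipschitz coefficients, scaling with $\eta$, yield $\|\Phi(u_{1})-\Phi(u_{2})\|\leq\tfrac{1}{2}\|u_{1}-u_{2}\|$ for $\eta$ small. Banach's fixed point theorem produces a unique $u$ in $\mathcal{C}_{\eta}$, which is by construction a decoupling field on $[T-\eta,T]$; uniqueness in the full class of weakly regular decoupling fields follows because any such field automatically belongs to some $\mathcal{C}_{\eta}$ after possibly enlarging $L^{\ast},M^{\ast}$, so two such fields must coincide with the contraction's fixed point.

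The hard part, as is typical for decoupling-field theorems, is the \emph{propagation of the Lipschitz bound}: one has to show that the map $\Phi$ does not push $L_{u,x}$ past the critical threshold $L_{\sigma,z}^{-1}$. This is precisely where the strict inequality $L_{\xi,x}<L_{\sigma,z}^{-1}$ in (SLC)(3) is used, and where the variational analysis of the FBSDE must be executed carefully, tracking the explicit dependence of all constants on $L_{\sigma,z}$ and $L_{u,x}$, so that the additional contribution acquired over $[T-\eta,T]$ is of order $\eta$ and can be absorbed into the slack $L^{\ast}-L_{\xi,x}$.
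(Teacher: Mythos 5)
Your proposal correctly identifies the strategy underlying \cite{Fromm2015}, Theorem~2.2.1, which the paper cites without reproducing: a small-time Banach fixed-point argument on a class of Lipschitz candidate decoupling fields with Lipschitz constant pinned strictly between $L_{\xi,x}$ and $L_{\sigma,z}^{-1}$, propagation of that bound via variational differentiation of the FBSDE in the initial value, and absorption of the growth of order $\eta$ into the slack $L^{\ast}-L_{\xi,x}$. One caution: the definition of $\Phi$ is circular as written (if $Y_{r}=u(r,X_{r})$ then $\Phi(u)(s,x)=u(s,x)$); what is intended is that $u$ is used only to close the forward equation and to resolve $Z$ contractively from the implicit martingale-representation relation (this is where $L_{u,x}L_{\sigma,z}<1$ enters), while $\Phi(u)(t_{1},x)$ is then taken to be $Y_{t_{1}}$ from the backward equation with terminal datum $\xi(X_{T})$, a quantity generally different from $u(t_{1},x)$.
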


A brief discussion of existence and uniqueness of classical solutions can be found in Remark 2.2.4 in \cite{Fromm2015}. For later reference we give the following remarks (cf. Remarks 2.2.2 and 2.2.3 in \cite{Fromm2015}).

\begin{remark}\label{hchoice}
  It can be observed from the proof that the supremum of all $h=T-t$, with $t$ satisfying the properties required in Theorem \ref{locexist} can be bounded away from $0$ by a bound, which only depends on the Lipschitz constant of $(\mu,\sigma,f)$ with respect to the last $3$ components, $T$, $L_{\sigma,z}$,  $L_{\xi}$ and $L_{\xi}\cdot L_{\sigma,z}<1$, and which is monotonically decreasing in these values.

  Furthermore, we notice from the proof that our decoupling field $u$ on $[t,T]$ satisfies $L_{u(s,\cdot),x}\leq L_{\xi,x}+C(T-s)^{\frac{1}{4}}$, where $C$ is some constant which does not depend on $s\in[t,T]$. More precisely, $C$ depends only on $T$, $L$, $L_{\xi,x}$, $L_{\xi,x}L_{\sigma,z}$ and is monotonically increasing in these values.
\end{remark}

This local theory for decoupling fields can be systematically extended to global results based on fairly simple ``small interval induction'' arguments (Lemma 2.5.1 and 2.5.2 in \cite{Fromm2015}).

\begin{thm}[\cite{Fromm2015}, Corollary 2.5.3, 2.5.4 and 2.5.5]\label{uniq}
  Suppose that $(\xi,(\mu,\sigma,f))$ satisfies \textup{(SLC)}.
  \begin{enumerate}
    \item\textup{Global uniqueness:} If there are two weakly regular decoupling fields $u^{(1)},u^{(2)}$ to the corresponding problem on some interval $[t,T]$, then we have $u^{(1)}=u^{(2)}$ up to modifications.
    \item\textup{Global regularity:} If there exists a weakly regular decoupling field $u$ to this problem on some interval $[t,T]$, then $u$ is strongly regular.
    \item If there exists a weakly regular decoupling field $u$ of the corresponding FBSDE on some interval $[t,T]$, then for any initial condition $X_t=x\in\mathbb{R}^n$ there is a unique solution $(X,Y,Z)$ of the FBSDE on $[t,T]$ satisfying
    \begin{equation*}
      \sup_{s\in[t,T]}\mathbb{E}[|X_s|^2]+\sup_{s\in[t,T]}\mathbb{E}[|Y_s|^2]+\mathbb{E}\left[\int_t^T|Z_s|^2\dx s\right]<\infty.
    \end{equation*}
  \end{enumerate}
\end{thm}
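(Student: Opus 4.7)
The plan is to prove all three parts by a backward ``small-interval induction'' starting from the terminal time $T$, based on the local theory (Theorem \ref{locexist}) combined with the pasting principle (Lemma \ref{glue}). The key leverage from weak regularity is that $L_{u,x} < L_{\sigma,z}^{-1}$ is a \emph{uniform} bound in time, so whenever we restart the local theorem at some $s_0\in[t,T]$ with terminal condition $u(s_0,\cdot)$, the crucial product $L_{u(s_0,\cdot),x}\cdot L_{\sigma,z}$ stays bounded away from $1$ by the same amount. By Remark \ref{hchoice}, this forces the admissible step size from the local theorem to be bounded below by some $h_0>0$ depending only on the Lipschitz data of $(\mu,\sigma,f)$, on $T$, on $L_{\sigma,z}$, and on the gap $1 - L_{u,x}L_{\sigma,z}$. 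Hence any backward construction terminates in finitely many steps, covering all of $[t,T]$.

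For \textbf{global uniqueness} (1), let $u^{(1)}, u^{(2)}$ be two weakly regular decoupling fields on $[t,T]$, and consider the set
\[
  A := \bigl\{\,s\in[t,T] \,:\, u^{(1)}=u^{(2)} \text{ up to modification on } [s,T]\,\bigr\}.
\]
Since both have terminal value $\xi$, we have $T\in A$. Let $s^\ast := \inf A$; using continuity of the decoupling fields in time (coming from the integral equations), $s^\ast\in A$. If $s^\ast>t$, apply Theorem \ref{locexist} to the FBSDE with terminal condition $u^{(1)}(s^\ast,\cdot)=u^{(2)}(s^\ast,\cdot)$ on a left neighborhood $[s^\ast - h_0, s^\ast]$: both restricted fields are weakly regular decoupling fields on this interval with the common terminal value, so local uniqueness from Theorem \ref{locexist} forces them to agree there, contradicting the definition of $s^\ast$. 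Hence $s^\ast=t$.

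For \textbf{global regularity} (2), the argument is parallel: the local theorem produces a strongly regular decoupling field $\tilde u$ on $[T-h_0,T]$ solving the FBSDE with terminal condition $\xi$. By part (1), $\tilde u$ coincides with $u|_{[T-h_0,T]}$, so $u$ is strongly regular on $[T-h_0,T]$. Now restart the local theorem at time $T-h_0$ with terminal condition $u(T-h_0,\cdot)$, which is itself weakly regular by hypothesis; this yields strong regularity on $[T-2h_0,T-h_0]$, and gluing via Lemma \ref{glue} together with standard $\mathbb{L}^2$ / weakly differentiable bounds concatenates the processes and their derivatives. After finitely many steps we cover $[t,T]$ and obtain the bounds \eqref{strongregul1}–\eqref{strongregul2}. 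Part (3) is then essentially immediate: given an initial condition $X_t = x$, strong regularity on the whole interval $[t,T]$ produces processes $(X,Y,Z)$ satisfying the forward–backward system with $Y_s = u(s,X_s)$, hence $Y_T = u(T,X_T)=\xi(X_T)$; the $\mathbb{S}^2\times\mathbb{H}^2$ bounds and uniqueness follow from those for the local pieces.

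The \emph{main obstacle} is controlling the Lipschitz constant $L_{u(s,\cdot),x}$ as $s$ decreases, so that the step size $h_0$ from Remark \ref{hchoice} does not shrink to zero and leave a gap near $t$. Weak regularity is exactly what rules this out: it provides a uniform $s$-independent bound strictly below $L_{\sigma,z}^{-1}$. The only delicate bookkeeping is to verify that when we apply the local theorem with $u(s_0,\cdot)$ as a new ``$\xi$'', the quantities driving $h_0$ (namely $L_{\xi,x}$ and $L_{\xi,x}L_{\sigma,z}$) remain controlled by the weak-regularity data of the original $u$, so that the same $h_0$ works at every induction step; this is precisely the monotonicity statement in Remark \ref{hchoice}.
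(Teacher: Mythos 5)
Your backward small-interval induction with a uniform step size $h_0 > 0$ (guaranteed by weak regularity and Remark \ref{hchoice}) is exactly the mechanism the paper attributes to \cite{Fromm2015} (Lemmas 2.5.1--2.5.2), so the route is correct. Two details, however, need repair. First, in part (1) you invoke ``continuity of the decoupling fields in time'' to conclude $s^\ast := \inf A \in A$. In the general (SLC) setting this continuity is not available at the point where you use it; the paper establishes continuity of $u$ only in the Markovian setting under a linear-growth assumption (Lemma \ref{deter}(3)). The step is also unnecessary: since $h_0$ from Remark \ref{hchoice} depends only on quantities that weak regularity bounds uniformly in $s$, you can simply iterate backward from $T$ in $\lceil (T-t)/h_0 \rceil$ steps --- local uniqueness on $[T-kh_0, T-(k-1)h_0]$ yields $u^{(1)}(T-kh_0,\cdot) = u^{(2)}(T-kh_0,\cdot)$, which is again an (SLC)-conform terminal condition with the same Lipschitz bound --- with no infimum, no closedness argument, and no continuity in time.

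Second, part (3) is not ``essentially immediate'' from part (2). Strong regularity (Definition \ref{def:regularity decoupling}) asserts uniqueness only among triplets $(X,Y,Z)$ that already satisfy the decoupling condition $Y_s = u(s,X_s)$, whereas part (3) claims uniqueness among all solutions of the forward--backward system on $[t,T]$ with the stated $\mathbb{S}^2\times\mathbb{H}^2$ bounds and $Y_T = \xi(X_T)$, with no decoupling assumed a priori. You must additionally show that any such solution in fact satisfies the decoupling condition: on the last small interval $[T-h_0,T]$ the restriction coincides with the unique decoupled solution supplied by the local theory, hence $Y_{T-h_0}=u(T-h_0,X_{T-h_0})$, and the decoupling condition then propagates leftward by the same induction. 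This extra identification step is precisely the content of part (3) beyond part (2), and it should not be elided.
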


\subsection{Markovian decoupling fields}\label{sec:markovian}

A system of FBSDEs given by $(\xi,(\mu,\sigma,f))$ is said to be \emph{Markovian} if these four coefficient functions are deterministic, that is, if they depend only on $(t,x,y,z)$. In the Markovian situation we can somewhat relax the Lipschitz continuity assumption and still obtain local existence together with uniqueness. What makes the Markovian case so special is the property
\begin{equation*}
  "Z_s=u_x(s,X_s)\cdot\sigma(s,X_s,Y_s,Z_s)",
\end{equation*}
which comes from the fact that $u$ will also be deterministic. This property allows us to bound $Z$ by a constant if we assume that $\sigma$ is bounded.

\begin{lemma}[\cite{Fromm2015}, Lemma 2.5.13, 2.5.14 and 2.5.15]\label{deter}
  Let $(\xi,(\mu,\sigma,f))$ be deterministic functions and satisfy \textup{(SLC)}. Suppose that there exist a weakly regular decoupling field $u$  on an interval $[t,T]$
  \begin{enumerate}
   \item The decoupling field $u$ is deterministic in the sense that it has a modification which is a function of $(r,x)\in[t,T]\times\mathbb{R}^n$ only.
   \item For an initial condition $X_{t}$ the corresponding $Z$ satisfies $\|Z\|_\infty\leq L_{u,x}\cdot\|\sigma\|_\infty$.
   
         If $\|Z\|_{\infty}<\infty$, we also have $\|Z\|_\infty\leq L_{u,x}\|\sigma(\cdot,\cdot,\cdot,0)\|_\infty (1-L_{u,x}L_{\sigma,z})^{-1}$.
   \item Assume further that $(\mu,\sigma,f)$ have linear growth in $(x,y)$ in the sense
         \begin{equation*}
           \left(|\mu|+|\sigma|+|f|\right)(t,\omega,x,y,z)\leq C\left(1+|x|+|y|\right)\quad\forall (t,x,y,z)\in[0,T]\times\mathbb{R}^{n}\times\mathbb{R}^{m}\times\mathbb{R}^{m\times d},
         \end{equation*}
         for a.a. $\omega\in\Omega$, where $C\in[0,\infty)$ is some constant.  If $u$ is a strongly regular and deterministic decoupling field, then $u$ is continuous in the sense that it has a modification which is a continuous function on $[t,T]\times\mathbb{R}^n$.
  \end{enumerate}
\end{lemma}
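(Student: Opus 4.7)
The plan is to prove the three parts sequentially, exploiting in each case the uniqueness and weak differentiability provided by strong regularity (Theorem \ref{uniq}).

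\textbf{Part (1).} For $s\in[t,T]$ and $x\in\mathbb{R}^n$, consider the FBSDE restarted at $s$ with deterministic initial condition $X_s=x$. Since all coefficients are deterministic and $x$ is constant, the equation admits a solution adapted to the subfiltration $\mathcal{G}^s_r:=\sigma(W_u-W_s,\,u\in[s,r])$, and by the global uniqueness statement this coincides with the $(\mathcal{F}_r)$-adapted solution $(X^{s,x},Y^{s,x},Z^{s,x})$. Since $\mathcal{G}^s_s$ is trivial, $Y^{s,x}_s$ is a.s.\ constant; set $\tilde u(s,x):=Y^{s,x}_s$. The decoupling condition $Y^{s,x}_s=u(s,\omega,x)$ a.s.\ then forces $u(s,\cdot,x)=\tilde u(s,x)$ a.s., which is the desired deterministic modification.

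\textbf{Part (2).} Using weak differentiability of $u$ in $x$ (a consequence of strong regularity), the plan is to apply a generalized It\^o formula to $s\mapsto u(s,X_s)$ and compare with the backward dynamics to extract the identity $Z_s=u_x(s,X_s)\,\sigma(s,X_s,Y_s,Z_s)$. Together with $|u_x|\leq L_{u,x}$, this immediately yields $\|Z\|_\infty\leq L_{u,x}\|\sigma\|_\infty$. The sharper bound follows from the Lipschitz property of $\sigma$ in $z$:
\[
|Z_s|\leq L_{u,x}\bigl(|\sigma(s,X_s,Y_s,0)|+L_{\sigma,z}|Z_s|\bigr);
\]
taking essential supremum and using $L_{u,x}L_{\sigma,z}<1$ from weak regularity, one solves for $\|Z\|_\infty$.

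\textbf{Part (3).} For $s_1\leq s_2$ in $[t,T]$, decompose
\[
u(s_1,x)-u(s_2,x)=\bigl(Y^{s_1,x}_{s_1}-Y^{s_1,x}_{s_2}\bigr)+\bigl(u(s_2,X^{s_1,x}_{s_2})-u(s_2,x)\bigr),
\]
using the decoupling condition $Y^{s_1,x}_{s_2}=u(s_2,X^{s_1,x}_{s_2})$ for the bridging term. Both sides being deterministic by Part (1), take expectations. The first bracket equals $-\mathbb{E}\!\int_{s_1}^{s_2}f(r,X^{s_1,x}_r,Y^{s_1,x}_r,Z^{s_1,x}_r)\dx r$; by linear growth of $f$, the estimate $\mathbb{E}[|X^{s_1,x}_r|^2]\leq C(1+|x|^2)$ coming from linear growth of $\mu,\sigma$, the bound $|Y^{s_1,x}_r|\leq|u(r,0)|+L_{u,x}|X^{s_1,x}_r|$, and the $Z$-bound from Part (2), this is of order $O(|s_2-s_1|)$. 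The second bracket is controlled by $L_{u,x}\,\mathbb{E}[|X^{s_1,x}_{s_2}-x|]\leq C(1+|x|)\sqrt{s_2-s_1}$ via standard SDE moment estimates. Combining the two with the Lipschitz continuity of $u(s,\cdot)$ in $x$ gives H\"older continuity in $s$ locally uniformly in $x$, hence joint continuity on $[t,T]\times\mathbb{R}^n$.

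The hard part will be the representation $Z_s=u_x(s,X_s)\sigma(s,X_s,Y_s,Z_s)$ used in Part (2): since $u$ is only weakly (not classically) differentiable, rigorously establishing it needs a Krylov-type It\^o formula for weakly differentiable decoupling fields, which should be available from the strong regularity machinery of \cite{Fromm2015}. Once this identity is in hand, everything else reduces to standard FBSDE and SDE estimates combined with the deterministic nature from Part (1).
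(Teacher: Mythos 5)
Parts (1) and (3) of your proposal are essentially right. For (1), constructing the solution on the sub-filtration generated by the post-$s$ Brownian increments (whose time-$s$ $\sigma$-field is trivial) and invoking global uniqueness does force the decoupling field to be constant in $\omega$, as you argue. For (3), the telescoping identity
\begin{equation*}
u(s_1,x)-u(s_2,x)=\mathbb{E}\bigl[Y^{s_1,x}_{s_1}-Y^{s_1,x}_{s_2}\bigr]+\mathbb{E}\bigl[u(s_2,X^{s_1,x}_{s_2})-u(s_2,x)\bigr]
\end{equation*}
(both sides deterministic by (1)) together with linear-growth moment bounds produces a modulus $C(1+|x|)\sqrt{|s_1-s_2|}$ for the time variable, which combined with the Lipschitz bound in $x$ gives joint continuity.

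The genuine gap is in Part (2). You correctly identify $Z_s=u_x(s,X_s)\sigma(s,X_s,Y_s,Z_s)$ as the identity to exploit, but the route you propose --- a Krylov-type It\^o formula for the weakly differentiable $u$ --- cannot work in the required generality. Krylov's It\^o formula for Sobolev functions needs uniform ellipticity of $\sigma\sigma^\top$. Nothing in \textup{(SLC)} provides this, and in fact the FBSDE \eqref{eq:fbsde2} to which this lemma is applied in the present paper has a \emph{degenerate} forward diffusion: the component $X^{(2)}$ carries no Brownian noise, so the law of $X_s$ is singular with respect to Lebesgue measure. In that regime the Krylov formula is unavailable, and moreover the composition $u_x(s,X_s)$ is not even $\mathbb{P}$-a.s.\ well-defined, since $u_x$ exists only Lebesgue-a.e.\ by Rademacher. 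The bound $\|Z\|_\infty\le L_{u,x}\|\sigma\|_\infty$ must therefore be obtained without a pointwise It\^o expansion. The device that actually works --- and the one this paper itself uses in the proof of Lemma \ref{lem:Zcontrol} --- is to show $\tfrac{1}{h}\mathbb{E}[Y_{s+h}(W_{s+h}-W_s)\mid\mathcal{F}_s]\to Z_s$ for a.e.\ $s$, insert the decoupling condition $Y_{s+h}=u(s+h,X_{s+h})$, subtract the $\mathcal{F}_s$-measurable centering $u(s+h,X_s)$, and then estimate using only the Lipschitz bound $|u(s+h,x)-u(s+h,x')|\le L_{u,x}|x-x'|$ together with the forward increment $X_{s+h}-X_s$. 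This yields $|Z_s|\le L_{u,x}\|\sigma\|_\infty$ without ever differentiating $u$, and the sharper bound then follows exactly as you describe, by plugging $|\sigma(s,X_s,Y_s,Z_s)|\le|\sigma(s,X_s,Y_s,0)|+L_{\sigma,z}|Z_s|$ into the inequality and solving for $\|Z\|_\infty$ using $L_{u,x}L_{\sigma,z}<1$.
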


In the Markovian case this boundedness of $Z$ motivates the following definition, which will allow us to develop a theory for non-Lipschitz problems via truncation.

\begin{definition}
  Let $t\in[0,T]$ and let $(\xi,(\mu,\sigma,f))$ be deterministic functions. We call a function $u\colon[t,T]\times\Omega\times\mathbb{R}^n\to\mathbb{R}^m$ with $u(T,\cdot)=\xi(\cdot)$ a.s. a \emph{Markovian decoupling field} for $\fbsde (\xi,(\mu,\sigma,f))$ on $[t,T]$ if $u$ is a decoupling field in the sense of Definition \ref{def:decoupling field} and additionally $\|Z\|_\infty<\infty$.
\end{definition}

The regularity properties for Markovian decoupling fields are analogously defined as for the standard decoupling fields (cf. Definition \ref{def:regularity decoupling}), which a slightly modification for strongly regular decoupling fields.

\begin{definition}
  Let $u\colon[t,T]\times\Omega\times\mathbb{R}^n\to\mathbb{R}^m$ be a Markovian decoupling field to $\fbsde(\xi,(\mu,\sigma,f))$. We call a weakly regular $u$ \emph{strongly regular} if for all fixed $t_1,t_2\in[t,T]$, $t_1\leq t_2,$ the processes $(X,Y,Z)$ arising in the defining property of a Markovian decoupling field are a.e. unique for each \emph{constant} initial value $X_{t_1}=x\in\mathbb{R}^n$ and satisfy \eqref{strongregul1}. In addition they must be measurable as functions of $(x,s,\omega)$ and even weakly differentiable w.r.t. $x\in\mathbb{R}^n$ such that for every $s\in[t_1,t_2]$ the mappings $X_s$ and $Y_s$ are measurable functions of $(x,\omega)$, and even weakly differentiable w.r.t. $x$ such that \eqref{strongregul2} holds.
\end{definition}

Let us introduce the assumption on the coefficients for which an existence and uniqueness theory will be developed.
\medskip\\
{\bf Assumption (MLLC):} $(\xi,(\mu,\sigma,f))$ fulfills a \emph{modified local Lipschitz condition} \textup{(MLLC)} if
\begin{enumerate}
  \item  the functions $(\mu,\sigma,f)$ are deterministic,
    \begin{enumerate}
      \item Lipschitz continuous in $(x,y,z)$ on sets of the form $[0,T]\times\mathbb{R}^n\times\mathbb{R}^{m} \times B$, where $B\subset \mathbb{R}^{m\times d}$ is an arbitrary bounded set,
      \item and fulfill $\|\mu(\cdot,0,0,0)\|_\infty,\|f(\cdot,0,0,0)\|_{\infty},\|\sigma(\cdot,\cdot,\cdot,0)\|_{\infty},L_{\sigma,z}<\infty$,
    \end{enumerate}
  \item $\xi\colon \mathbb{R}^n\to \mathbb{R}^m$ satisfies $L_{\xi,x}<L_{\sigma,z}^{-1}$.
\end{enumerate}

We begin by providing a local existence result.

\begin{thm}\label{LocLip}
  Let $(\xi,(\mu,\sigma,f))$ satisfy \textup{(MLLC)}. Then there exists a time $t\in[0,T)$ such that $\fbsde (\xi,(\mu,\sigma,f))$ has a unique weakly regular Markovian decoupling field $u$ on $[t,T]$. This $u$ is also strongly regular, deterministic, continuous and satisfies $\sup_{t_1,t_2,X_{t_1}}\|Z\|_\infty<\infty$, where $t_1<t_2$ are from $[t,T]$ and $X_{t_1}$ is an initial value (see the definition of a Markovian decoupling field for the meaning of these variables).
\end{thm}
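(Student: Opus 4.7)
The plan is to reduce to the globally Lipschitz setting of Theorem~\ref{locexist} by truncating the $z$-dependence of $\mu$ and $f$ at a level $N$, solve the resulting truncated FBSDE via the existing theory, and then use the Markovian a priori bound on $Z$ from Lemma~\ref{deter}(ii) to verify that on a sufficiently small interval $|Z|<N$ holds, so that the truncation is inactive and the decoupling field of the truncated system is also one for the original FBSDE.

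Concretely, since $L_{\xi,x}<L_{\sigma,z}^{-1}$ and $\|\sigma(\cdot,\cdot,\cdot,0)\|_\infty<\infty$ by \textup{(MLLC)}, the quantity $B_0:=L_{\xi,x}\|\sigma(\cdot,\cdot,\cdot,0)\|_\infty/(1-L_{\xi,x}L_{\sigma,z})$ is finite. Fix $N>B_0$ and let $\rho_N\colon\mathbb{R}^{m\times d}\to\mathbb{R}^{m\times d}$ be a Lipschitz truncation with $\rho_N(z)=z$ for $|z|\le N$ and $|\rho_N(z)|\le N$ for all $z$. Setting $\mu^N(t,x,y,z):=\mu(t,x,y,\rho_N(z))$ and $f^N(t,x,y,z):=f(t,x,y,\rho_N(z))$, assumption \textup{(MLLC)} makes $(\mu^N,\sigma,f^N)$ globally Lipschitz in $(x,y,z)$ and bounded at the origin, so $(\xi,(\mu^N,\sigma,f^N))$ satisfies \textup{(SLC)}. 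Theorem~\ref{locexist} then furnishes a unique weakly regular decoupling field $u^N$ on some interval $[t^\ast,T]$; by Theorem~\ref{uniq}(ii) it is strongly regular, and by Lemma~\ref{deter}(i),(iii) it is deterministic and continuous (linear growth of $\mu^N,f^N,\sigma$ in $(x,y)$ follows from the global Lipschitz property and boundedness at the origin).

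For any $t_1\le t_2$ in $[t^\ast,T]$ and any initial value $X_{t_1}$, the corresponding $Z$ satisfies the Markovian identity $Z_s=u^N_x(s,X_s)\,\sigma(s,X_s,Y_s,Z_s)$, and combining this with $|\sigma(\cdot,z)|\le\|\sigma(\cdot,\cdot,\cdot,0)\|_\infty+L_{\sigma,z}|z|$ gives, as in Lemma~\ref{deter}(ii),
\[
|Z_s|\le \frac{L_{u^N,x}\,\|\sigma(\cdot,\cdot,\cdot,0)\|_\infty}{1-L_{u^N,x}L_{\sigma,z}}.
\]
By Remark~\ref{hchoice}, $L_{u^N(s,\cdot),x}\le L_{\xi,x}+C(T-s)^{1/4}$ with $C$ depending only on the (now $N$-dependent) Lipschitz data, so restricting to a subinterval $[t,T]\subseteq[t^\ast,T]$ with $T-t$ sufficiently small makes the right-hand side strictly less than $N$, uniformly in $t_1,t_2$ and $X_{t_1}$. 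Hence $\rho_N(Z)=Z$ on $[t,T]$, the triple $(X,Y,Z)$ also solves the untruncated FBSDE, and $u^N$ is the desired strongly regular, deterministic, continuous Markovian decoupling field for $(\xi,(\mu,\sigma,f))$ on $[t,T]$, together with the uniform bound on $\|Z\|_\infty$ required by the statement.

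Uniqueness is obtained by the same truncation trick: any competing weakly regular Markovian decoupling field $v$ on $[t,T]$ has $\|Z^v\|_\infty=:M<\infty$ by definition, so at any level $\tilde N\ge\max(N,M)$ both $u^N$ and $v$ are weakly regular decoupling fields for the $\tilde N$-truncated problem (which satisfies \textup{(SLC)}); Theorem~\ref{uniq}(i) then yields $u^N=v$ up to modification. The main obstacle is the fixed-point loop between the a priori $Z$-bound and the truncation threshold, which is closed by the assumption $L_{\xi,x}L_{\sigma,z}<1$ together with the Hölder modulus for $L_{u,x}$ of Remark~\ref{hchoice}: this lets us first pick $N$ from the terminal data alone and then shrink the interval until the $Z$-bound drops below $N$, independently of the initial condition.
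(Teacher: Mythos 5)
Your plan is essentially the paper's: truncate the $z$-dependence, invoke the \textup{(SLC)} theory for the truncated system, use the Markovian a priori bound on $Z$ from Lemma~\ref{deter}(ii) together with the H\"older modulus of $L_{u,x}$ from Remark~\ref{hchoice} to choose the truncation level first and then shrink the interval until $|Z|$ falls below it, and for uniqueness exploit that any competing Markovian decoupling field has bounded $Z$ by definition so the comparison can be made inside a common truncated \textup{(SLC)} problem. However, there is one genuine gap.

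You truncate only $\mu$ and $f$, and assert that \textup{(MLLC)} already makes $(\mu^N,\sigma,f^N)$ globally Lipschitz so that \textup{(SLC)} holds. That step fails: \textup{(MLLC)} gives $L_{\sigma,z}<\infty$ (a \emph{global} Lipschitz constant of $\sigma$ in $z$ alone) but joint Lipschitz continuity in $(x,y,z)$ only on sets $[0,T]\times\mathbb{R}^n\times\mathbb{R}^m\times B$ with $B$ bounded, with a constant that is allowed to depend on $B$. In particular the Lipschitz constant of $\sigma$ in $(x,y)$ may grow without bound as $|z|\to\infty$; e.g.\ for $n=m=d=1$, take $\sigma(t,x,y,z)=z\,\arctan(x)$. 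This has $L_{\sigma,z}=\|\arctan\|_\infty<\infty$ and $\|\sigma(\cdot,\cdot,\cdot,0)\|_\infty=0$, is Lipschitz on every bounded $z$-set, yet $\partial_x\sigma=z/(1+x^2)$ is unbounded, so $(\xi,(\mu^N,\sigma,f^N))$ does \emph{not} satisfy \textup{(SLC)} and Theorem~\ref{locexist} cannot be applied. The fix is exactly the paper's: truncate $\sigma$ as well, setting $\sigma_H(t,x,y,z):=\sigma(t,x,y,\chi_H(z))$. Then $(\mu_H,\sigma_H,f_H)$ is globally Lipschitz (with constant $L_H$), $\sigma_H(\cdot,\cdot,\cdot,0)=\sigma(\cdot,\cdot,\cdot,0)$, and $L_{\sigma_H,z}\le L_{\sigma,z}$, so the second part of Lemma~\ref{deter}(ii) still delivers
$\|Z\|_\infty\le L_{u,x}\|\sigma(\cdot,\cdot,\cdot,0)\|_\infty/(1-L_{u,x}L_{\sigma,z})$
and the remainder of your argument (choosing $N=H$ from the terminal data and then shrinking $T-t$) closes unchanged.
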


\begin{proof}
  For any constant $H>0$ let $\chi_{H}\colon\mathbb{R}^{m\times d}\to \mathbb{R}^{m\times d}$ be defined as
  \begin{equation*}
    \chi_H(z):=\mathbf{1}_{\{|z|<H\}}z+\frac{H}{|z|}\mathbf{1}_{\{|z|\geq H\}}z.
  \end{equation*}
  It is easy to check that $\chi_{H}$ is Lipschitz continuous with Lipschitz constant $L_{\chi_H}=1$ and bounded by $H$. Furthermore, we have $\chi_H(z)=z$ if $|z|\leq H$. We implement an "inner cutoff" by defining $(\mu_H,\sigma_H,f_H)$ via $\mu_H(t,x,y,z):=\mu(t,x,y,\chi_H(z))$, etc.
  
  The boundedness of $\chi_{H}$ together with its Lipschitz continuity makes $(\mu_H,\sigma_H,f_H)$ Lipschitz continuous with some Lipschitz constant $L_H$. Furthermore, $L_{\sigma_H,z}\leq L_{\sigma,z}$. Also $(\mu_H,\sigma_H,f_H)$ have linear growth in $(y,z)$ as required by Lemma \ref{deter}. According to Theorem \ref{locexist} we know that the problem given by $\fbsde (\xi,(\mu_H,\sigma_H,f_H))$ has a unique weakly regular decoupling field $u$ on some small interval $[t',T]$ where $t'\in[0,T)$. We also know that this $u$ is strongly regular, $u$ is deterministic (by Lemma \ref{deter}), and continuous (by Lemma \ref{deter}).
  
  We will show that for sufficiently large $H$ and $t\in[t',T)$ it will also be a Markovian decoupling field to the problem $\fbsde (\xi,(\mu,\sigma,f))$. By Remark \ref{hchoice} we obtain $L_{u(t,\cdot),x}\leq L_{\xi,x}+C_H(T-t)^{\frac{1}{4}}$ for all  $t\in[t',T]$, where $C_H<\infty$ is a constant which does not depend on $t\in[t',T]$. For any $t_1\in [t',T]$ and $\mathcal{F}_{t_1}$-measurable initial value $X_{t_1}$ consider the corresponding unique $(X,Y,Z)$ on $[{t_1},T]$ satisfying the forward equation, the backward equation and the decoupling condition for $\mu_H,\sigma_H,f_H$ and $u$. Using Lemma \ref{deter} we have $\|Z\|_\infty\leq L_{u,x}\|\sigma_H\|_\infty\leq L_{u,x}\left(\|\sigma(\cdot,\cdot,\cdot,0)\|_{\infty}+L_{\sigma,z}H\right)<\infty$ and thus
  \begin{align}\label{Zbound}
    \|Z\|_\infty&\leq \frac{\sup_{s\in[t_1,T]}L_{u(s,\cdot),x}\cdot\|\sigma(\cdot,\cdot,\cdot,0)\|_\infty}{1-\sup_{s\in[t_1,T]}L_{u(s,\cdot),x}L_{\sigma,z}}
     \leq \frac{\left(L_{\xi,x}+C_H(T-t_1)^{\frac{1}{4}}\right)\cdot\|\sigma(\cdot,\cdot,\cdot,0)\|_\infty}{1-L_{\xi,x}L_{\sigma,z}-L_{\sigma,z}C_H(T-t_1)^{\frac{1}{4}}}\nonumber \\
     &=\frac{L_{\xi,x}\|\sigma(\cdot,\cdot,\cdot,0)\|_\infty}{1-L_{\xi,x}L_{\sigma,z}-L_{\sigma,z}C_H(T-t_1)^{\frac{1}{4}}}+ \frac{C_H(T-t_1)^{\frac{1}{4}}\cdot\|\sigma(\cdot,\cdot,\cdot,0)\|_\infty}{1-L_{\xi,x}L_{\sigma,z}-L_{\sigma,z}C_H(T-t_1)^{\frac{1}{4}}}
  \end{align}
  for $T-t_1$ small enough. 
  
  Now we only need to choose $H$ large enough such that $\frac{L_{\xi,x}\|\sigma(\cdot,\cdot,\cdot,0)\|_\infty}{1-L_{\xi,x}L_{\sigma,z}}$ becomes smaller than $\frac{H}{4}$, and then in the second step choose $t$ close enough to $T$ such that $L_{\sigma,z}C_H(T-t)^{\frac{1}{4}}$ becomes smaller than $\frac{1}{2}\left(1-L_{\xi,x}L_{\sigma,z}\right)$ and $\frac{C_H\|\sigma(\cdot,\cdot,\cdot,0)\|_\infty (T-t)^{\frac{1}{4}}}{1-L_{\xi,x}L_{\sigma,z}}$ becomes smaller than $\frac{H}{4}$.

  Considering \eqref{Zbound} this implies that if $t_1\in[t,T]$ the process $Z$ a.e. does not leave the region in which the cutoff is "passive", i.e. the ball of radius $H$.
  Therefore, $u$ restricted to the interval $[t,T]$ is a decoupling field to $\fbsde (\xi,(\mu,\sigma,f))$, not just to $\fbsde (\xi,(\mu_H,\sigma_H,f_H))$. It is even a Markovian decoupling field due to the boundedness of $Z$. As a Markovian decoupling field it is weakly regular, because it is weakly regular as a decoupling field to $\fbsde (\xi,(\mu_H,\sigma_H,f_H))$.

  For the uniqueness we assume than there is another weakly regular Markovian decoupling field $\tilde{u}$ to $\fbsde (\xi,(\mu,\sigma,f))$ on $[t,T]$. Choose a $t_1\in [t,T]$ and an $x\in\mathbb{R}^n$ as initial condition $X_{t_1}=x$, and consider the corresponding processes $(\tilde{X},\tilde{Y},\tilde{Z})$ that satisfy the corresponding FBSDE on $[t_1,T]$, together with the decoupling condition via $\tilde{u}$. At the same time consider $(X,Y,Z)$ solving the same FBSDE on $[t_1,T]$, but associated with the Markovian decoupling field $u$. Since $\tilde{Z},Z$ are bounded, the two triplets $(\tilde{X},\tilde{Y},\tilde{Z})$ and $(X,Y,Z)$ also solve the Lipschitz FBSDE given by $\fbsde (\xi,(\mu_H,\sigma_H,f_H))$ on $[t_1,T]$ for $H$ large enough. The two conditions $\tilde{Y}_s=\tilde{u}(s,\tilde{X}_s)$ and $Y_s=u(s,X_s)$ imply by Remark 2.2.4 in \cite{Fromm2015} that both triplets are progressively measurable processes on $[t_1,T]\times\Omega$ such that
  \begin{equation*}
    \sup_{s\in[t_1,T]}\mathbb{E}_{0,\infty}\left[|X_s|^2\right]+\sup_{s\in[t_1,T]}\mathbb{E}_{0,\infty}\left[|Y_s|^2|\right]+\mathbb{E}_{0,\infty}\left[\int_{t_1}^T|Z_s|^2\dx s\right]<\infty
  \end{equation*}
  and coincide. In particular, $\tilde{u}(t_1,x)=\tilde{Y}_{t_1}=Y_{t_1}=u(t_1,x)$.

  Strong regularity of $u$ as a Markovian decoupling field to $\fbsde (\xi,(\mu,\sigma,f))$ follows directly from the above argument about uniqueness of $(X,Y,Z)$ for deterministic initial values and bounded $Z$, and the strong regularity of $u$ as decoupling field to $\fbsde (\xi,(\mu_H,\sigma_H,f_H))$.
\end{proof}

\begin{remark}\label{hchoiceM}
  We observe from the proof that the supremum of all $h=T-t$ with $t$ satisfying the hypotheses of Theorem \ref{LocLip} can be bounded away from $0$ by a bound, which only depends on
  $L_{\xi,x}$, $L_{\xi,x}\cdot L_{\sigma,z}$,  $\|\sigma(\cdot,\cdot,\cdot,0)\|_\infty$, $T$, $L_{\sigma,z}$ and the values $(L_H)_{H\in[0,\infty)}$ where $L_H$ is the Lipschitz constant of $(\mu,\sigma,f)$ on $[0,T]\times\mathbb{R}^n\times \mathbb{R}^m \times B_H$ w.r.t. to the last $3$ components, where $B_H\subset \mathbb{R}^{m\times d}$ denotes the ball of radius $H$ with center $0$. This bounded is monotonically decreasing in these values.
\end{remark}

The following natural concept introduces a type of Markovian decoupling fields for non-Lipschitz problems (non-Lipschitz in $z$), to which nevertheless Lipschitz results can be applied.

\begin{definition}
  Let $u$ be a Markovian decoupling field for $\fbsde(\xi,(\mu,\sigma,f))$.
  \begin{itemize}
    \item We call $u$ \emph{controlled in $z$} if there exists a constant $C>0$ such that for all $t_1,t_2\in[t,T]$, $t_1\leq t_2$, and all initial values $X_{t_1}$, the corresponding processes $(X,Y,Z)$ from the definition of a Markovian decoupling field satisfy $|Z_s(\omega)|\leq C$, for almost all $(s,\omega)\in[t,T]\times\Omega$. If for a fixed triplet $(t_1,t_2,X_{t_1})$ there are different choices for $(X,Y,Z)$, then all of them are supposed to satisfy the above control.
    \item We say that a Markovian decoupling field on $[t,T]$ is \emph{controlled in $z$} on a subinterval $[t_1,t_2]\subseteq[t,T]$ if $u$ restricted to $[t_1,t_2]$ is a Markovian decoupling field for $\fbsde(u(t_2,\cdot),(\mu,\sigma,f))$ that is controlled in $z$.
    \item A Markovian decoupling field $u$ on an interval $(s,T]$ is said to be \emph{controlled in $z$} if it is controlled in $z$ on every compact subinterval $[t,T]\subseteq (s,T]$ with $C$ possibly depending on $t$.
  \end{itemize}
\end{definition}

\begin{remark}\label{zconrem}
  Our Markovian decoupling field from Theorem \ref{LocLip} is obviously controlled in $Z$: consider \eqref{Zbound} together with the choice of $t\leq t_1$ made in the proof.
\end{remark}

\begin{remark}\label{deterpluscontin}
  Let $(\xi,(\mu,\sigma,f))$ satisfy \textup{(MLLC)}, and assume that we have a Markovian decoupling field $u$ on some interval $[t,T]$, which is weakly regular and controlled in $z$. Then $u$ is also a solution to a Lipschitz problem obtained through a cutoff as in Theorem \ref{LocLip}. As such it is strongly regular (Theorem \ref{uniq}) and deterministic (Lemma \ref{deter}). But Lemma \ref{deter} is also applicable, since due to the use of a cutoff we can assume the type of linear growth required there. Thus, $u$ is also continuous.
\end{remark}

\begin{lemma}\label{gluecontrolM}
  Let $(\xi,(\mu,\sigma,f))$ satisfy \textup{(MLLC)}. For $0\leq s<t<T$ let $u$ be a weakly regular Markovian decoupling field for $\fbsde (\xi,(\mu,\sigma,f))$ on $[s,T]$.
  If $u$ is controlled in $z$ on $[s,t]$ and $T-t$ is small enough as required in Theorem \ref{LocLip} resp. Remark \ref{hchoiceM}, then $u$ is controlled in $z$ on $[s,T]$.
\end{lemma}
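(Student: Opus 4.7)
The plan is to split the interval $[s,T]$ at the cut point $t$ and to obtain uniform $Z$-bounds on the two pieces separately, then combine them.

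First, I would establish control of $u$ on $[t,T]$ by appealing to Theorem \ref{LocLip} and its uniqueness statement. Since $u$ is by assumption a weakly regular Markovian decoupling field for $\fbsde(\xi,(\mu,\sigma,f))$ on the whole interval $[s,T]$, its restriction $u|_{[t,T]}$ is a weakly regular Markovian decoupling field for the same problem on $[t,T]$. Because $T-t$ is small enough in the sense of Remark \ref{hchoiceM}, Theorem \ref{LocLip} produces a weakly regular Markovian decoupling field $\tilde u$ on $[t,T]$, which by Remark \ref{zconrem} is controlled in $z$, say with constant $C_1$. The uniqueness part of Theorem \ref{LocLip} then forces $u|_{[t,T]} = \tilde u$ up to modification, so $u$ is controlled in $z$ on $[t,T]$ with the constant $C_1$.

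Second, let $C_2$ denote the constant from the assumed control of $u$ in $z$ on $[s,t]$. I would then show that $C := \max(C_1,C_2)$ is a common control constant on all of $[s,T]$. For any $t_1 \leq t_2$ in $[s,T]$ and any $\mathcal{F}_{t_1}$-measurable initial value $X_{t_1}$, pick associated processes $(X,Y,Z)$ as in the definition of a Markovian decoupling field. If $t_2 \leq t$, the bound $|Z| \leq C_2 \leq C$ is immediate from control on $[s,t]$. If $t_1 \geq t$, the bound $|Z| \leq C_1 \leq C$ follows from control on $[t,T]$. The interesting case is $t_1 < t < t_2$: here I restrict $(X,Y,Z)$ to $[t_1,t]$ and to $[t,t_2]$. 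On $[t_1,t]$ the restricted triplet satisfies the forward/backward equations together with $Y_r = u(r,X_r)$, so it is an admissible triplet for $u$ on $[s,t]$ with initial value $X_{t_1}$, giving $|Z| \leq C_2$. On $[t,t_2]$, the restriction is an admissible triplet for $u$ on $[t,T]$ with the $\mathcal{F}_t$-measurable initial value $X_t$, giving $|Z| \leq C_1$. Taking the maximum yields $|Z| \leq C$ throughout $[t_1,t_2]$.

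The main (mild) subtlety is making sure that the restriction argument is legitimate: that is, that restricting a triplet $(X,Y,Z)$ solving the defining equations on $[t_1,t_2]$ to a subinterval $[r_1,r_2] \subseteq [t_1,t_2]$ yields an admissible triplet in the decoupling-field sense on $[r_1,r_2]$ with the new initial value $X_{r_1}$. This is immediate from the definition of decoupling field, since the forward equation on $[r_1,r_2]$ is obtained by subtracting the identities at $r_1$ and $s \in [r_1,r_2]$, the backward equation is likewise obtained by restriction, and the decoupling condition $Y_s = u(s,X_s)$ is preserved pointwise. Once this is noted, the proof reduces to the two-step argument above and no further estimates are required.
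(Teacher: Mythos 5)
Your proof is correct and matches the paper's argument essentially line for line: get a control constant on $[t,T]$ via Theorem \ref{LocLip}/Remark \ref{zconrem} (with the uniqueness observation made explicit), take the maximum with the constant on $[s,t]$, and then handle the mixed case $t_1 < t < t_2$ by restricting the triplet $(X,Y,Z)$ to $[t_1,t]$ and $[t,t_2]$. No gaps.
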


\begin{proof}
  Clearly, $u$ is not just controlled in $z$ on $[s,t]$, but also on $[t,T]$ (with a possibly different constant), according to Remark \ref{zconrem}. Define $C$ as the maximum of these two constants. 
  
  We only need to control $Z$ by $C$ for the case $s\leq t_1\leq t\leq t_2\leq T$, the other two cases being trivial. For this purpose consider the processes $(X,Y,Z)$ on the interval $[t_1,t_2]$ corresponding to some initial value $X_{t_1}$ and fulfilling the forward equation, the backward equation and the decoupling condition. Since the restrictions of these processes to $[t_1,t]$ still fulfill these three properties we obtain $|Z_r(\omega)|\leq C$ for almost all $r\in[t_1,t]$, $\omega\in\Omega$.
  
  At the same time, if we restrict $(X,Y,Z)$ to $[t,t_2]$, we observe that these restrictions satisfy the forward equation, the backward equation and the decoupling condition for the interval $[t,t_2]$ with $X_t$ as initial value. Therefore, $|Z_r|\leq C$ holds for a.s. for $r\in [t,t_2]$.
\end{proof}

The following important result allows us to connect the \textup{(MLLC)}-case to \textup{(SLC)}.

\begin{thm}\label{controllM}
  Let be such that \textup{(MLLC)} is satisfied and assume that there exists a weakly regular Markovian decoupling field $u$ for $(\xi,(\mu,\sigma,f))$ on $[t,T]$. Then $u$ is controlled in $z$.
\end{thm}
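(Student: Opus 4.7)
The plan is to perform a backward induction from $T$, showing that $u$ is controlled in $z$ on successively larger left-extensions of $\{T\}$, where each extension has length at least a uniform constant $h_1>0$. The crucial ingredient is that weak regularity of $u$ on $[t,T]$ yields a \emph{global} bound $L := L_{u,x} < L_{\sigma,z}^{-1}$ which applies uniformly to every ``time-slice'' $u(s,\cdot)$ for $s\in[t,T]$; this is precisely what prevents the local existence horizon from collapsing to zero as we iterate.

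First, for each $s\in(t,T]$, I would verify that the shifted terminal problem $\fbsde(u(s,\cdot),(\mu,\sigma,f))$ satisfies \textup{(MLLC)}: weak regularity of $u$ furnishes $L_{u(s,\cdot),x}\leq L<L_{\sigma,z}^{-1}$ and $\|u(s,\cdot,0)\|_\infty\leq \sup_{r\in[t,T]}\|u(r,\cdot,0)\|_\infty<\infty$, while the conditions on $(\mu,\sigma,f)$ are unchanged. Applying Theorem~\ref{LocLip} at $s$ then yields a unique weakly regular Markovian decoupling field $v^s$ on $[\max(t,s-h(s)),s]$, which is additionally controlled in $z$ (Remark~\ref{zconrem}). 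By Remark~\ref{hchoiceM}, $h(s)$ is bounded below by a quantity depending only on $L$, $L\cdot L_{\sigma,z}$, $\|\sigma(\cdot,\cdot,\cdot,0)\|_\infty$, $T$, $L_{\sigma,z}$, and the local Lipschitz constants $(L_H)_H$; since all of these are uniform in $s$, there exists $h_1>0$ with $h(s)\geq h_1$ for every $s\in[t,T]$. Inspection of \eqref{Zbound} with $L_{\xi,x}$ replaced by $L_{u(s,\cdot),x}\leq L$ likewise furnishes a uniform control constant $C_1$ for $v^s$.

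Next, I would invoke the uniqueness clause of Theorem~\ref{LocLip}: $u|_{[\max(t,s-h_1),s]}$ is itself a weakly regular Markovian decoupling field for $\fbsde(u(s,\cdot),(\mu,\sigma,f))$, so it coincides with $v^s$ up to modification and is therefore controlled in $z$ on that interval with constant $C_1$. Then I would iterate: set $s_0:=T$ and $s_{k+1}:=\max(t,s_k-h_1)$; at each stage $u$ is controlled on $[s_{k+1},s_k]$ by $C_1$, and the three-case bookkeeping from the proof of Lemma~\ref{gluecontrolM} (both $t_1,t_2$ in one subinterval, both in the other, or straddling $s_k$) pastes this control together with the inductive control on $[s_k,T]$ to yield control on $[s_{k+1},T]$ with the same constant $C_1$. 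After at most $\lceil (T-t)/h_1\rceil$ steps we reach $s_N=t$, establishing that $u$ is controlled in $z$ on all of $[t,T]$.

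The main obstacle is securing the uniform lower bound $h_1$: this is precisely where global weak regularity is essential, since Remark~\ref{hchoiceM} depends monotonically on the Lipschitz constant of the terminal condition, and without the uniform bound $L_{u(s,\cdot),x}\leq L$ the step size could degenerate to zero along the induction, preventing termination in finitely many steps. Once $h_1$ is in hand, the remaining ingredients — local existence (Theorem~\ref{LocLip}), uniqueness, and the trivial gluing of bounds on adjacent subintervals — are already provided by the preceding results.
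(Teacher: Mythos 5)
Your proposal is correct and follows essentially the same strategy as the paper: obtain local control on intervals of uniform length $h_1$ via Theorem~\ref{LocLip}, Remark~\ref{zconrem}, and the uniform bound $L_{u(s,\cdot),x}\leq L_{u,x}<L_{\sigma,z}^{-1}$ supplied by weak regularity (which is exactly what Remark~\ref{hchoiceM} feeds on), then paste the controls across the junction times as in the proof of Lemma~\ref{gluecontrolM}. The only cosmetic difference is that you run an explicit backward iteration from $T$ in steps of size $h_1$, while the paper phrases the same argument as a forward small-interval induction (with $S$ growing from $\{t\}$ to $[t,T]$); the underlying ingredients and the crucial uniformity observation are identical.
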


\begin{proof}
  Let $S\subseteq [t,T]$ be the set of all times $s\in[t,T]$, s.t. $u$ is controlled in $z$ on $[t,s]$.
  \begin{itemize}
    \item Clearly $t\in S$: For the interval $[t,t]=\{t\}$ one can only choose $t_1=t_2=t$ and so $Z\colon[t,t]\times\Omega\rightarrow\mathbb{R}^{m\times d}$ is $\mathrm{d} t \otimes\mathrm{d}\mathbb{P}$-a.e. $0$, independently of the initial value $X_{t_1}$. So we can take for $C$ any positive value.
    \item Let $s\in S$ be arbitrary. According to Lemma \ref{gluecontrolM} there exists an $h>0$ s.t. $u$ is controlled in $z$ on $[t,(s+h)\wedge T]$ since  $\|u((s+h)\wedge T,\cdot)\|_\infty<\infty$ and $L_{u((s+h)\wedge T,\cdot)}<L_{\sigma,z}^{-1}$. Considering Remark \ref{hchoiceM} and the requirements  $\|u\|_\infty<\infty$, $L_{u,x}<L_{\sigma,z}^{-1}$, we can choose $h$ independently of $s$.
  \end{itemize}
  This shows $S=[t,T]$ by small interval induction (Lemma 2.5.1 and 2.5.2 in \cite{Fromm2015}).
\end{proof}

Note that Theorem \ref{controllM} implies together with Remark \ref{deterpluscontin} that a weakly regular Markovian decoupling field to an \textup{(MLLC)} problem is deterministic and continuous.

Such a $u$ will be a standard decoupling field to an \textup{(SLC)} problem if we truncate $(\mu,\sigma,f)$ appropriately. We can thereby extend the whole theory to \textup{(MLLC)} problems:

\begin{thm}[]\label{uniqM}
  Let $(\xi,(\mu,\sigma,f))$ satisfy \textup{(MLLC)}.
  \begin{enumerate}
    \item \textup{Global uniqueness:} If there are two weakly regular Markovian decoupling fields $u^{(1)},u^{(2)}$ to this problem on some interval $[t,T]$, then $u^{(1)}=u^{(2)}$.
    \item \textup{Global regularity:} If that there exists a weakly regular Markovian decoupling field $u$ to this problem on some interval $[t,T]$, then $u$ is strongly regular.
  \end{enumerate}
\end{thm}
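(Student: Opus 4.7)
The proof should leverage the key bridge from (MLLC) to (SLC): Theorem \ref{controllM} guarantees that any weakly regular Markovian decoupling field for an (MLLC) problem is automatically controlled in $z$, i.e.\ the associated $Z$-processes are uniformly bounded. This allows us to replace $(\mu,\sigma,f)$ by a cutoff version $(\mu_H,\sigma_H,f_H)$, for $H$ sufficiently large, which satisfies (SLC), and thereby reduce both statements to the corresponding ones in Theorem \ref{uniq}.

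For \textbf{global uniqueness}, I would take two weakly regular Markovian decoupling fields $u^{(1)}, u^{(2)}$ on $[t,T]$. Apply Theorem \ref{controllM} to each to obtain constants $C_1,C_2$ bounding the corresponding $Z$-processes for every choice of $(t_1,t_2,X_{t_1})$. Fix $H > \max(C_1,C_2)$ and define the truncated coefficients $(\mu_H,\sigma_H,f_H)$ via the cutoff $\chi_H$ as in the proof of Theorem \ref{LocLip}. These coefficients are globally Lipschitz, and $L_{\sigma_H,z}\leq L_{\sigma,z}$, so $(\xi,(\mu_H,\sigma_H,f_H))$ satisfies (SLC) and the condition $L_{u^{(i)},x} < L_{\sigma,z}^{-1}\leq L_{\sigma_H,z}^{-1}$ is preserved. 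Because $|Z|\leq H$ for the $(X,Y,Z)$-triplets associated with each $u^{(i)}$, the cutoff is inactive, so the same triplets solve the truncated FBSDE. Hence both $u^{(1)}$ and $u^{(2)}$ are weakly regular (standard) decoupling fields for the (SLC) problem $(\xi,(\mu_H,\sigma_H,f_H))$, and Theorem \ref{uniq}(i) yields $u^{(1)} = u^{(2)}$ up to modifications.

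For \textbf{global regularity}, I would proceed analogously: given a weakly regular Markovian decoupling field $u$ for $(\xi,(\mu,\sigma,f))$, apply Theorem \ref{controllM} to get a constant $C$ bounding $Z$ uniformly in the choice of $(t_1,t_2,X_{t_1})$, and fix $H>C$. Then $u$ is a weakly regular decoupling field for the (SLC) problem $(\xi,(\mu_H,\sigma_H,f_H))$. By Theorem \ref{uniq}(ii), $u$ is strongly regular as a decoupling field for the truncated problem, which means the triplets $(X,Y,Z)$ are a.e.\ unique, satisfy the integrability bound \eqref{strongregul1}, and admit weak derivatives with respect to $x$ satisfying \eqref{strongregul2}. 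Since $|Z|\leq C < H$, these same triplets solve the original (non-truncated) FBSDE on every $[t_1,t_2]$, and therefore $u$ is strongly regular as a Markovian decoupling field for $(\xi,(\mu,\sigma,f))$ as well.

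The main delicate point is verifying that the uniqueness in the strong-regularity portion transfers correctly from the cutoff problem to the original one, since strong regularity for Markovian decoupling fields demands uniqueness only for \emph{constant} initial values while Theorem \ref{uniq}(ii) provides uniqueness for arbitrary $\mathcal{F}_{t_1}$-measurable $X_{t_1}$. This is in our favor, as the stronger conclusion from the (SLC) theory directly implies what is needed in the (MLLC) setting. The one bookkeeping step to be careful about is that the bound $C$ from Theorem \ref{controllM} is uniform over all $(t_1,t_2,X_{t_1})$, so a single $H$ works simultaneously for every choice, and thus a single Lipschitz truncation suffices to transfer both uniqueness and the differentiability estimates to the original problem.
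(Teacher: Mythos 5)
Your proof is correct and follows essentially the same approach as the paper: invoke Theorem \ref{controllM} to obtain the uniform bound on $Z$, introduce a passive cutoff $(\mu_H,\sigma_H,f_H)$ with $H$ exceeding that bound (so the truncated problem satisfies (SLC) while sharing the same solution triplets), and then transfer the conclusions from Theorem \ref{uniq}. The paper states this in two lines; your version spells out the bookkeeping (uniformity of $C$ across $(t_1,t_2,X_{t_1})$, preservation of $L_{u,x}<L_{\sigma,z}^{-1}$ under truncation, and the direction of the implication between uniqueness over constant vs.\ general initial values), which is exactly the content implicit in the paper's shorthand.
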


\begin{proof}
  1. We know that $u^{(1)}$ and $u^{(2)}$ are controlled in $z$. Choose a passive cutoff (see proof of Theorem \ref{LocLip}) and apply 1. of Theorem \ref{uniq}.
  
  2. $u$ is controlled in $z$. Choose a passive cutoff (see proof of Theorem \ref{LocLip}) and apply 2. of Theorem \ref{uniq}.
\end{proof}

\begin{lemma}\label{uniqXYZM}
  Let $(\xi,\mu,\sigma,f))$ satisfy \textup{(MLLC)} and assume that there exists a weakly regular Markovian decoupling field $u$ of the corresponding FBSDE on some interval $[t,T]$.
  
  Then for any initial condition $X_t=x\in\mathbb{R}^n$ there is a unique solution $(X,Y,Z)$ of the FBSDE on $[t,T]$ such that
  \begin{equation*}
    \sup_{s\in[t,T]}\mathbb{E}[|X_s|^2]+\sup_{s\in[t,T]}\mathbb{E}[|Y_s|^2]+\|Z\|_\infty<\infty.
  \end{equation*}
\end{lemma}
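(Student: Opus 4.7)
The plan is to reduce to the Lipschitz case by a passive cutoff in $z$ and then invoke part~3 of Theorem~\ref{uniq}. First I would apply Theorem~\ref{controllM} to conclude that the given weakly regular Markovian decoupling field $u$ is controlled in $z$: there exists a constant $C>0$ such that any $Z$ appearing in the definition of $u$ on $[t,T]$ satisfies $|Z_s(\omega)|\leq C$ for almost every $(s,\omega)$, independently of the (constant) initial value $X_t=x$.

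Next, fix any $H>C$ and introduce the truncated coefficients $(\mu_H,\sigma_H,f_H)$ via $\mu_H(s,x,y,z):=\mu(s,x,y,\chi_H(z))$ etc., as in the proof of Theorem~\ref{LocLip}. Then $(\xi,(\mu_H,\sigma_H,f_H))$ satisfies \textup{(SLC)} and, since the control of $u$ in $z$ forces the cutoff to be passive, $u$ is still a weakly regular decoupling field for the truncated problem on $[t,T]$. I would then apply part~3 of Theorem~\ref{uniq} to the truncated \textup{(SLC)} problem: for the initial condition $X_t=x$ there exists a unique triplet $(X,Y,Z)$ solving the truncated FBSDE on $[t,T]$ with
\begin{equation*}
\sup_{s\in[t,T]}\mathbb{E}[|X_s|^2]+\sup_{s\in[t,T]}\mathbb{E}[|Y_s|^2]+\mathbb{E}\Bigl[\int_t^T|Z_s|^2\dx s\Bigr]<\infty.
\end{equation*}
Since $u$ is controlled in $z$, this $Z$ automatically satisfies $\|Z\|_\infty\leq C<H$, so $\chi_H(Z)=Z$ a.e., which means $(X,Y,Z)$ is actually a solution of the original untruncated FBSDE, and clearly $\|Z\|_\infty<\infty$. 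This establishes existence.

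For uniqueness, I would consider any second solution $(\tilde X,\tilde Y,\tilde Z)$ of the \textup{(MLLC)} FBSDE on $[t,T]$ with $\sup_s\mathbb{E}[|\tilde X_s|^2]+\sup_s\mathbb{E}[|\tilde Y_s|^2]+\|\tilde Z\|_\infty<\infty$, and enlarge $H$ if necessary so that $H>\max(C,\|\tilde Z\|_\infty)$. Since $\chi_H(\tilde Z)=\tilde Z$ a.e., this triplet also solves the truncated \textup{(SLC)} problem, and the $L^\infty$ bound on $\tilde Z$ yields the required $L^2$ integrability. The uniqueness statement in Theorem~\ref{uniq}.3, applied to the truncated problem with decoupling field $u$, then forces $(\tilde X,\tilde Y,\tilde Z)=(X,Y,Z)$.

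The main subtlety lies in the uniqueness part: one must ensure that an \emph{arbitrary} solution satisfying only the stated bounds (in particular, without any a priori link to $u$ via the decoupling condition) indeed fits into the framework of Theorem~\ref{uniq}.3. This is precisely where the passive cutoff construction is essential, as the cutoff level $H$ can be chosen large enough to accommodate both the constructed solution's $Z$ (bounded by $C$) and the competitor's $\tilde Z$, after which the \textup{(SLC)} uniqueness applies directly.
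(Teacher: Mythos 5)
Your proof is correct and follows essentially the same approach as the paper's: invoke Theorem~\ref{controllM} to get a uniform bound on $Z$, pass to a passive cutoff so the system becomes an (SLC) problem with the same decoupling field $u$, and then appeal to Theorem~\ref{uniq} (parts 2 and 3) for existence and uniqueness; the bound on $Z$ ensures the cutoff is indeed passive for both the constructed solution and any competitor. The paper's own existence argument cites strong regularity (Theorem~\ref{uniqM}.2) directly rather than routing through Theorem~\ref{uniq}.3 for the truncated problem, but this is only a cosmetic difference — the underlying mechanism is identical.
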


\begin{proof}
  Existence follows from the fact that $u$ is also strongly regular according to 2. of Theorem \ref{uniqM} and controlled in $z$ according to Theorem \ref{controllM}. 
  
  Uniqueness follows from Corollary \ref{uniq}: Assume there are two solutions $(X,Y,Z)$ and $(\tilde X,\tilde Y,\tilde Z)$ to the FBSDE on $[t,T]$ both satisfying the aforementioned bound. But then they both solve an \textup{(SLC)}-conform FBSDE obtained through a passive cutoff. So they must coincide according to Corollary \ref{uniq}.
\end{proof}

\begin{definition}
  Let $I^{M}_{\mathrm{max}}\subseteq[0,T]$ for $\fbsde(\xi,(\mu,\sigma,f))$ be the union of all intervals $[t,T]\subseteq[0,T]$ such that there exists a weakly regular Markovian decoupling field $u$ on $[t,T]$.
\end{definition}

Unfortunately, the maximal interval might very well be open to the left. Therefore, we need to make our notions more precise in the following definitions.

\begin{definition}
  Let $0\leq t<T$.
  \begin{itemize}
    \item We call a function $u\colon(t,T]\times\mathbb{R}^n\to\mathbb{R}^m$ a Markovian decoupling field for $\fbsde(\xi,(\mu,\sigma,f))$ on $(t,T]$ if $u$ restricted to $[t',T]$ is a Markovian decoupling field for all $t'\in(t,T]$.
    \item A Markovian decoupling field $u$ on $(t,T]$ is said to be \emph{weakly regular} if $u$ restricted to $[t',T]$ is a weakly regular Markovian decoupling field for all $t'\in(t,T]$.
    \item A Markovian decoupling field $u$ on $(t,T]$ is said to be \emph{strongly regular} if $u$ restricted to $[t',T]$ is strongly regular for all $t'\in(t,T]$.
  \end{itemize}
\end{definition}

\begin{thm}[Global existence in weak form]\label{globalexistM}
  Let $(\xi,(\mu,\sigma,f))$ satisfy \textup{(MLLC)}. Then there exists a unique weakly regular Markovian decoupling field $u$ on $I^M_{\mathrm{max}}$. This $u$ is also deterministic, controlled in $z$ and strongly regular. 
  
  Moreover, either $I^{M}_{\mathrm{max}}=[0,T]$ or $I^{M}_{\mathrm{max}}=(t^{M}_{\mathrm{min}},T]$, where $0\leq t^{M}_{\mathrm{min}}<T$.
\end{thm}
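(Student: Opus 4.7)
My approach is to paste together the local Markovian decoupling fields produced by Theorem \ref{LocLip}, using the global uniqueness statement of Theorem \ref{uniqM} as the compatibility condition on overlaps, and then to rule out a closed left endpoint of $I^M_{\mathrm{max}}$ by a contradiction argument based on left-extension. The additional structural properties of $u$ then come essentially for free from the theory built up in this section.

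First I would construct $u$ on $I^M_{\mathrm{max}}$. Given any $s \in I^M_{\mathrm{max}}$, there exists by definition some $[t,T]\ni s$ carrying a weakly regular Markovian decoupling field $u^{[t,T]}$. If two such intervals $[t_1,T]$ and $[t_2,T]$ both contain $s$, then both fields restrict to weakly regular Markovian decoupling fields on the common interval $[\max(t_1,t_2),T]$, and part 1 of Theorem \ref{uniqM} forces them to agree there. So $u(s,\cdot):=u^{[t,T]}(s,\cdot)$ is unambiguously defined. By construction $u$ restricted to any compact $[t',T]\subseteq I^M_{\mathrm{max}}$ is a weakly regular Markovian decoupling field, and hence strongly regular (part 2 of Theorem \ref{uniqM}), controlled in $z$ (Theorem \ref{controllM}), and deterministic and continuous (Remark \ref{deterpluscontin}). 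These properties on every compact subinterval are precisely what is required of $u$ on $I^M_{\mathrm{max}}$, under our extended definitions for intervals open to the left.

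For the shape of $I^M_{\mathrm{max}}$, set $t^M_{\mathrm{min}}:=\inf I^M_{\mathrm{max}}$; since the local existence theorem forces $I^M_{\mathrm{max}}$ to be a non-trivial interval ending in $T$, we have $0\leq t^M_{\mathrm{min}}<T$. I would argue by contradiction that $t^M_{\mathrm{min}}\in I^M_{\mathrm{max}}$ is only possible when $t^M_{\mathrm{min}}=0$. Assuming $t^M_{\mathrm{min}}\in I^M_{\mathrm{max}}$ and $t^M_{\mathrm{min}}>0$, weak regularity of $u$ on $[t^M_{\mathrm{min}},T]$ yields $L_{u(t^M_{\mathrm{min}},\cdot),x}\leq L_{u,x}<L_{\sigma,z}^{-1}$ and $\|u(t^M_{\mathrm{min}},\cdot,0)\|_\infty<\infty$, so the triple $(u(t^M_{\mathrm{min}},\cdot),(\mu,\sigma,f))$ satisfies (MLLC) when $t^M_{\mathrm{min}}$ is treated as a new terminal time. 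Applying Theorem \ref{LocLip} yields $h>0$ and a weakly regular Markovian decoupling field $\tilde u$ on $[t^M_{\mathrm{min}}-h,t^M_{\mathrm{min}}]$. The pasted map
\[
\hat u := \tilde u\,\mathbf{1}_{[t^M_{\mathrm{min}}-h,\,t^M_{\mathrm{min}}]} + u\,\mathbf{1}_{(t^M_{\mathrm{min}},T]}
\]
is a decoupling field on $[t^M_{\mathrm{min}}-h,T]$ for the original problem by Lemma \ref{glue}, controlled in $z$ by Lemma \ref{gluecontrolM} (so genuinely Markovian), and weakly regular since $L_{\hat u,x}$ is the maximum of two constants both strictly below $L_{\sigma,z}^{-1}$ while $\|\hat u(\cdot,\cdot,0)\|_\infty$ is finite. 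This contradicts the minimality of $t^M_{\mathrm{min}}$, so either $t^M_{\mathrm{min}}=0$ (yielding $I^M_{\mathrm{max}}=[0,T]$) or $t^M_{\mathrm{min}}\notin I^M_{\mathrm{max}}$ (yielding $I^M_{\mathrm{max}}=(t^M_{\mathrm{min}},T]$).

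The main obstacle will be ensuring that the left-extension truly produces a Markovian decoupling field of the original problem, not merely of a truncation: this is exactly what Lemma \ref{gluecontrolM} was set up to supply. A secondary point I would be careful about is that the bound $h$ obtained from Theorem \ref{LocLip} at the new terminal condition depends only on the quantities listed in Remark \ref{hchoiceM}, so it does not degenerate as we approach $t^M_{\mathrm{min}}$, which is what makes the contradiction argument go through.
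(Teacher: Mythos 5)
Your proposal is correct and follows essentially the same route as the paper: construct $u$ on $I^M_{\mathrm{max}}$ by pasting the local fields from Theorem~\ref{LocLip} with compatibility on overlaps guaranteed by Theorem~\ref{uniqM}, inherit strong regularity, controlled-in-$z$, determinism and continuity from the compact-subinterval statements, and rule out $I^M_{\mathrm{max}}=[t,T]$ with $t>0$ by a left-extension via Theorem~\ref{LocLip} and Lemma~\ref{glue}. One small technical remark: your citation of Lemma~\ref{gluecontrolM} to verify that the pasted field is genuinely Markovian is not a perfectly clean fit, since that lemma is stated under the hypothesis that $T-t$ is small enough (used there to get control near $T$ via Remark~\ref{zconrem}), whereas your glueing point $t^M_{\mathrm{min}}$ is not close to $T$. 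What you actually need is the combination step in that lemma's proof together with the fact that $u$ is already controlled in $z$ on $[t^M_{\mathrm{min}},T]$ by Theorem~\ref{controllM}, so the smallness hypothesis is not needed; but this is more careful than the paper's own proof, which passes over the Markovian-ness of the extended field in silence.
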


\begin{proof}
  Let $t\in I^M_{\mathrm{max}}$. Obviously, there exists a Markovian decoupling field $\check{u}^{(t)}$ on $[t,T]$ satisfying $L_{\check{u}^{(t)},x}<L_{\sigma,z}^{-1}$ and $\sup_{s\in[t,T]}\|\check{u}^{(t)}(s,\cdot,0)\|_\infty<\infty$.
  $\check{u}^{(t)}$ is controlled in $z$ and strongly regular due to Theorems \ref{controllM} and \ref{uniqM}.
  We can further assume w.l.o.g. that $\check{u}^{(t)}$ is a continuous function on $[t,T]\times\mathbb{R}^n$ according to Remark \ref{deterpluscontin}.
  There is only one such $\check{u}^{(t)}$ according to Theorem \ref{uniqM}. Furthermore, for $t,t'\in I^M_{\mathrm{max}}$ the functions $\check{u}^{(t)}$ and $\check{u}^{(t')}$ coincide on $[t\vee t',T]$ because of Theorem \ref{uniqM}. 
  
  Define $u(t,\cdot):=\check{u}^{(t)}(t,\cdot)$ for all $t\in I^M_{\mathrm{max}}$. This function $u$ is a Markovian decoupling field on $[t,T]$, since it coincides with $\check{u}^{(t)}$ on $[t,T]$. Therefore, $u$ is a Markovian decoupling field on the whole interval $I^M_{\mathrm{max}}$ and satisfies $L_{u|_{[t,T]},x}<L_{\sigma,z}^{-1}$ and $\sup_{s\in[t,T]}\|u|_{[t,T]}(s,\cdot,0)\|_\infty<\infty$ for all $t\in I^M_{\mathrm{max}}$. 
  
  Uniqueness of $u$ follows directly from Theorem \ref{uniqM} applied to every interval $[t,T]\subseteq I^M_{\mathrm{max}}$. 
  
  Addressing the form of $I^M_{\mathrm{max}}$, we see that $I^M_{\mathrm{max}}=[t,T]$ with $t\in(0,T]$ is not possible: Assume otherwise. According to the above there exists a Markovian decoupling field $u$ on $[t,T]$ s.t. $L_{u,x}<L_{\sigma,z}^{-1}$ and $\sup_{s\in[t,T]}\|u(s,\cdot,0)\|_\infty<\infty$. But then $u$ can be extended a little bit to the left using Theorem \ref{LocLip} and Lemma \ref{glue}, which contradicts the definition of $I^M_{\mathrm{max}}$.
\end{proof}

The next result states that for a singularity $t^{M}_{\mathrm{min}}$ to occur $u_x$ has to "explode" at $t^M_{\mathrm{min}}$.

\begin{lemma}\label{explosionM}
  Let $(\xi,(\mu,\sigma,f))$ satisfy \textup{(MLLC)}. If $I^{M}_{\mathrm{max}}=(t^{M}_{\mathrm{min}},T]$, then $\lim_{t\downarrow t^{M}_{\mathrm{min}}}L_{u(t,\cdot),x}=L_{\sigma,z}^{-1}$, where $u$ is the Markovian decoupling field according to Theorem \ref{globalexistM}.
\end{lemma}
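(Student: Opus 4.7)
The plan is a contradiction argument built around Theorem~\ref{LocLip} (local existence) together with the uniform lower bound on the backward extension length furnished by Remark~\ref{hchoiceM}. By weak regularity of $u$ on every subinterval $[t,T]\subseteq(t^{M}_{\mathrm{min}},T]$,
\begin{equation*}
L_{u(t,\cdot),x}\le L_{u|_{[t,T]},x}<L_{\sigma,z}^{-1},
\end{equation*}
so $\limsup_{t\downarrow t^{M}_{\mathrm{min}}}L_{u(t,\cdot),x}\le L_{\sigma,z}^{-1}$ is free. It therefore suffices to rule out $\liminf_{t\downarrow t^{M}_{\mathrm{min}}}L_{u(t,\cdot),x}<L_{\sigma,z}^{-1}$. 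Assume otherwise and pick a constant $K<L_{\sigma,z}^{-1}$ (just a finite $K$ if $L_{\sigma,z}=0$) along with a sequence $t_n\downarrow t^{M}_{\mathrm{min}}$ satisfying $L_{u(t_n,\cdot),x}\le K$ for all $n$.

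For each $n$, I would apply Theorem~\ref{LocLip} to the FBSDE with terminal time $t_n$, terminal condition $u(t_n,\cdot)$ and the original coefficients $(\mu,\sigma,f)$. The data satisfies \textup{(MLLC)}: the only condition that is not directly inherited is the Lipschitz bound on the terminal condition, which is exactly $L_{u(t_n,\cdot),x}\le K<L_{\sigma,z}^{-1}$. This yields, for every $n$, a weakly regular Markovian decoupling field $\tilde u_n$ on some interval $[t_n-h_n,t_n]$ with $h_n>0$, whose value at $t_n$ agrees with $u(t_n,\cdot)$.

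The real content of the argument, and in my view the only delicate step, is choosing the extension length uniformly in $n$. This is exactly where Remark~\ref{hchoiceM} kicks in: the admissible $h_n$ depends only on $L_{u(t_n,\cdot),x}$, the product $L_{u(t_n,\cdot),x}\cdot L_{\sigma,z}$, the fixed quantities $\|\sigma(\cdot,\cdot,\cdot,0)\|_\infty$, $T$, $L_{\sigma,z}$, and the family $(L_H)_{H\ge 0}$, and is monotonically decreasing in the first two. Since all other quantities are independent of $n$ and $L_{u(t_n,\cdot),x}\le K$ with $K\cdot L_{\sigma,z}<1$ (trivially so when $L_{\sigma,z}=0$), there exists a single $h^*>0$ such that one may take $h_n\ge h^*$ for every $n$.

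I then close the argument by pasting. By Lemma~\ref{glue}, the map $\hat u_n:=\tilde u_n\mathbf{1}_{[t_n-h^*,t_n]}+u\mathbf{1}_{(t_n,T]}$ is a decoupling field on $[t_n-h^*,T]$; it is \emph{Markovian} because, for any initial condition on a subinterval straddling $t_n$, the associated triplet $(X,Y,Z)$ decomposes into two pieces each with bounded $Z$ (by the Markovian property of $\tilde u_n$ and of $u|_{[t_n,T]}$, respectively), so $\|Z\|_\infty<\infty$ holds overall. It is weakly regular since
\begin{equation*}
L_{\hat u_n,x}=\max\bigl(L_{\tilde u_n,x},\,L_{u|_{[t_n,T]},x}\bigr)<L_{\sigma,z}^{-1}
\end{equation*}
and $\sup_{s\in[t_n-h^*,T]}\|\hat u_n(s,\cdot,0)\|_\infty<\infty$ by finiteness on each piece. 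Choosing $n$ so large that $t_n-h^*<t^{M}_{\mathrm{min}}$ produces a weakly regular Markovian decoupling field on an interval extending strictly to the left of $t^{M}_{\mathrm{min}}$, contradicting the maximality of $I^{M}_{\mathrm{max}}$ and finishing the proof.
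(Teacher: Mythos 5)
Your proof is correct and follows the same approach as the paper: argue by contradiction, extract a sequence $t_n\downarrow t^M_{\mathrm{min}}$ with $\sup_n L_{u(t_n,\cdot),x}<L_{\sigma,z}^{-1}$, invoke Remark~\ref{hchoiceM} to obtain an $n$-independent extension length, apply Theorem~\ref{LocLip} together with Lemma~\ref{glue} to extend $u$ past $t^M_{\mathrm{min}}$, and contradict maximality of $I^M_{\mathrm{max}}$. The paper's own proof is considerably terser, leaving the pasting step and the checks of Markovianity and weak regularity implicit, whereas you spell them out — but the logical content is identical.
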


\begin{proof}
  We argue indirectly and assume otherwise. Then we can select times $t_n\downarrow t^M_{\mathrm{min}}$, $n\to\infty$ such that $\sup_{n\in\mathbb{N}}L_{u(t_n,\cdot),x}<L_{\sigma,z}^{-1}$. But then we may choose an $h>0$ due to Remark \ref{hchoiceM} which does not depend on $n$ and choose $n$ large enough to have $t_n-t^M_{\mathrm{min}}<h$. So $u$ can be extended to the larger interval $[(t_n-h)\vee 0,T]$ contradicting the definition of $I^M_{\mathrm{max}}$.
\end{proof}

\section{Solution to the Skorokhod embedding problem}\label{sec:solving}

In this section we present a solution to the Skorokhod embedding problem as stated in (SEP) at the beginning of Section \ref{sec:SEP} based on solutions of the associated system of FBSDEs.

\subsection{Weak solution}

Let us therefore return to our FBSDE \eqref{eq:fbsde} that can be rewritten slightly more generally as
\begin{align}\label{eq:fbsde2}
  X^{(1)}_s &= x^{(1)}+ \int_{t}^s 1 \dx W_r,  &X^{(2)}_s &= x^{(2)} +\int_{t}^s Z_r^2 \dx r, \notag \\
  Y_s       &= g(X^{(1)}_T)- \delta (X^{(2)}_T)-\int_{s}^{T}Z_r\dx W_r,  &u(s,X^{(1)}_s,X^{(2)}_s) &= Y_s,
\end{align}
for $s\in[t,T]$ and $(x^{(1)},x^{(2)})\in\mathbb{R}^2$. In particular, this FBSDE satisfies \textup{(MLLC)} and by choosing $(x^{(1)},x^{(2)}):=(0,0)$ and $T=1$ we have $X^{(1)}_1=W_1$ and $X^{(2)}_1=\int_{0}^1 Z_s^2 \dx s$, which makes the FBSDE equivalent to \eqref{eq:fbsde}.

With the general results of Section \ref{sec:markovian} at hand we are capable to solve this system of equations. In other words, we perform the {\bf second step} of our algorithm to solve the SEP.

\begin{lemma}\label{lem:ext}
  Assume that $\delta$ and $g$ are Lipschitz continuous. Then for the FBSDE \eqref{eq:fbsde2} there exists a unique weakly regular Markovian decoupling field $u$ on $[0,T]$. This $u$ is strongly regular, controlled in $z$, deterministic and continuous. 
  
  Especially, equation \eqref{eq:fbsde} has a unique solution $(Y,Z)$ such that $\|Z\|_\infty<\infty$.
\end{lemma}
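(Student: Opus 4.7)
The plan is to place the FBSDE \eqref{eq:fbsde2} inside the Markovian framework of Section \ref{sec:markovian} and to propagate the local existence theorem down to $t=0$ by means of a sharp a priori bound on the Lipschitz constant of the decoupling field. First I would verify the hypothesis (MLLC). The coefficients $\mu(t,x,y,z)=(0,z^{2})^{\top}$, $\sigma\equiv(1,0)^{\top}$ and $f\equiv 0$ are deterministic; since $\sigma$ is constant one has $L_{\sigma,z}=0$ and thus $L_{\sigma,z}^{-1}=\infty$, and on each ball $\{|z|\leq H\}$ the only non-Lipschitz component $z\mapsto z^{2}$ is Lipschitz with constant $2H$. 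The terminal $\xi(x^{(1)},x^{(2)})=g(x^{(1)})-\delta(x^{(2)})$ is Lipschitz by hypothesis, so $L_{\xi,x}<\infty=L_{\sigma,z}^{-1}$. Theorem \ref{globalexistM} therefore produces a unique weakly regular Markovian decoupling field $u$ on a maximal interval $I^{M}_{\max}$, and Theorem \ref{uniqM} together with Remark \ref{deterpluscontin} automatically delivers determinism, continuity, strong regularity and control in $z$.

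The central task is to prove $I^{M}_{\max}=[0,T]$. Arguing by contradiction, if $I^{M}_{\max}=(t^{M}_{\min},T]$ with $t^{M}_{\min}\geq 0$, then by Lemma \ref{explosionM} we would have $L_{u(t,\cdot),x}\to L_{\sigma,z}^{-1}=\infty$ as $t\downarrow t^{M}_{\min}$. I would rule this out by establishing the uniform a priori bounds $\|u_{x^{(1)}}\|_{\infty}\leq L_{g}$ and $\|u_{x^{(2)}}\|_{\infty}\leq L_{\delta}$. For a truncation $\chi_{H}$ with $H>L_{g}\vee L_{\delta}$, as in the proof of Theorem \ref{LocLip}, the resulting (SLC)-conforming FBSDE admits a weakly regular decoupling field $u^{H}$ on all of $[0,T]$ which solves (in a suitable sense) the quasi-linear PDE $u^{H}_{t}+\tfrac{1}{2}u^{H}_{x^{(1)}x^{(1)}}+u^{H}_{x^{(2)}}\chi_{H}(u^{H}_{x^{(1)}})^{2}=0$ backwards from the terminal $g(x^{(1)})-\delta(x^{(2)})$. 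Differentiating once in $x^{(1)}$ and once in $x^{(2)}$ yields backward parabolic equations for $p:=u^{H}_{x^{(1)}}$ and $v:=u^{H}_{x^{(2)}}$ whose first-order transport terms all vanish at any interior space-time extremum, so the backward parabolic maximum principle gives $\|p\|_{\infty}\leq L_{g}$ and $\|v\|_{\infty}\leq L_{\delta}$. Since then $Z^{H}=p\circ X^{H}$ satisfies $\|Z^{H}\|_{\infty}\leq L_{g}<H$, the truncation is passive, so $u^{H}$ is itself a Markovian decoupling field for the untruncated \eqref{eq:fbsde2} on $[0,T]$; by uniqueness (Theorem \ref{uniqM}) it coincides with $u$ on $I^{M}_{\max}$, contradicting $L_{u,x}\to\infty$. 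Hence $I^{M}_{\max}=[0,T]$, and the remaining claim about a unique solution of \eqref{eq:fbsde} with $\|Z\|_{\infty}<\infty$ follows from Lemma \ref{uniqXYZM} applied at initial value $(x^{(1)},x^{(2)})=(0,0)$ and $t=0$.

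The main obstacle is the a priori gradient bound for the truncated problem. The PDE for $v$ contains the nonlinear term $2vp\,v_{x^{(1)}}$, so the backward maximum principle has to be set up with care on the unbounded spatial domain $\mathbb{R}^{2}$ (typically by smoothing the cutoff, using an exponential weight to handle infinity, and then passing to the limit). A fully probabilistic alternative is to work with the variational FBSDE $(\nabla X,\nabla Y,\nabla Z)$: the identities $\nabla_{j}Y_{s}=\mathbb{E}_{s}[\nabla_{j}Y_{T}]$ combined with a Gr\"onwall/BMO closure exploiting the boundedness of $Z^{H}$ (using the BMO tools collected in Section \ref{sec:appendix}) deliver the same $L_{g},L_{\delta}$-bounds. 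Either route must be set up so as to avoid a circular dependence on the very quantity $L_{u,x}$ that one is trying to control.
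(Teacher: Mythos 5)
Your overall strategy is the same as the paper's: verify (MLLC), invoke Theorem~\ref{globalexistM} on $I^M_{\mathrm{max}}$, and rule out $I^M_{\mathrm{max}}\neq[0,T]$ via Lemma~\ref{explosionM} by producing uniform a priori bounds on the two spatial derivatives of $u$. The gap is in the key step — actually establishing $\|u_{x^{(1)}}\|_\infty\le L_g$ and $\|u_{x^{(2)}}\|_\infty\le L_\delta$ — where both of your suggested routes fall short of a complete argument.

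Your primary route (backward parabolic maximum principle for $p=u^H_{x^{(1)}}$ and $v=u^H_{x^{(2)}}$) presupposes classical regularity of $u^H$: you differentiate the quasi-linear PDE twice, which requires $u^H\in C^{1,2}$ at least, whereas the (SLC) decoupling-field theory used in the paper only produces Lipschitz continuity in $x$, weak differentiability, and continuity (Lemma~\ref{deter}, Remark~\ref{deterpluscontin}). Bootstrapping to the smoothness needed for the maximum-principle argument (mollifying $g,\delta$, obtaining higher interior Schauder estimates uniform in the mollification, justifying the interchange of limits) would be a substantial additional piece of work, and in fact the paper avoids the analytic route entirely. You also need the maximum principle to work on the unbounded domain $\mathbb{R}^2$ and for a system where the transport term $2\chi_H(p)\chi_H'(p)v\,p_{x^{(1)}}+\chi_H(p)^2 p_{x^{(2)}}$ is coupled to the other derivative through $v$; you acknowledge this is nontrivial but do not resolve it.

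Your probabilistic alternative is closer to what the paper actually does, but the identity $\nabla_j Y_s=\mathbb{E}_s[\nabla_j Y_T]$ (true because the driver $f\equiv 0$) does not directly deliver the bounds: for $j=2$ one has $\nabla_2 Y_T=-\delta'(X^{(2)}_T)\,\nabla_2 X^{(2)}_T$, and $\nabla_2 X^{(2)}_T$ is a priori unbounded. The paper's argument works with the quotient $V_s=\nabla_2 Y_s/\nabla_2 X^{(2)}_s=u_{x^{(2)}}(s,X_s)$ (requiring first a separate argument that $\nabla_2 X^{(2)}$ stays strictly positive, via the exponential representation $\nabla_2 X^{(2)}_{s}=\exp(\int_t^{s}2Z_r\tilde Z_r\dx r)$ — a point your sketch omits entirely), derives a \emph{linear} BSDE $\dx V=(-2Z V)\tilde Z\,\dx r+\tilde Z\,\dx W$, and then absorbs the drift into a Girsanov change of measure so that $V_s=\mathbb{E}_{\mathbb{Q},s}[-\delta'(X^{(2)}_T)]$, which yields the clean bound $\|V\|_\infty\le\|\delta'\|_\infty$. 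The "Grönwall/BMO closure" you mention instead would, as stated, give a bound depending on $\|\tilde Z\|_{BMO}$, and the control of $\|\tilde Z\|_{BMO}$ via Theorem~\ref{BSDEBMO} itself depends on $\|V\|_\infty$, which you are trying to bound — so without the measure change the argument does not obviously close, and certainly not with the sharp constants $L_g,L_\delta$ that the proof of Theorem~\ref{thm:weak solution} later relies on. To complete the proof you would need to add the positivity/invertibility of $\nabla X$, the chain-rule justification via Lemma~\ref{chainruleambr}, and the Girsanov step that the paper uses for both $V$ and $U=u_{x^{(1)}}(s,X_s)$.
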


\begin{proof}
  Using Theorem \ref{globalexistM} we know that there exists a unique weakly regular Markovian decoupling field $u$ on $I^M_{\mathrm{max}}$. This $u$ is strongly regular, controlled in $z$, deterministic and continuous. It remains to prove $I^M_{\mathrm{max}}=[0,T]$. Due to Lemma \ref{explosionM} it is sufficient to show the existence of a constant $C\in [t,\infty]$ such that $L_{u(t,\cdot ),x} \leq C < L_{\sigma, z}^{-1}$ for all $t\in I^M_{\mathrm{max}}$. In our case $L_{\sigma, z}^{-1}=\infty$, so we have to prove that the weak partial derivatives of $u$ with respect to $x^{(1)}$ and $x^{(2)}$ are both uniformly bounded.
  
  Fix $t\in I^M_{\mathrm{max}}$ and consider the corresponding FBSDE on $[t,T]$: First notice that the associated triplet $(X,Y,Z)$ depends on the initial value $x=(x^{(1)},x^{(2)})\in\mathbb{R}^2$, even in a weakly differentiable way with respect to the initial value $x$, according to the strong regularity of $u$. For more details about weak derivatives we refer to Chapter 2 of \cite{Fromm2015}, Section 2.1.2.
  
  Let us look at the matrix $\frac{\dx}{\dx x}X$. We observe that
  \begin{align*}
    & \frac{\dx}{\dx x^{(1)}}X^{(1)}_s=1,
    && \frac{\dx}{\dx x^{(1)}}X^{(2)}_s=\int_{t}^s 2 Z_r\frac{\dx}{\dx x^{(1)}} Z_r \dx r,\\
    & \frac{\dx}{\dx x^{(2)}}X^{(1)}_s=0,
    && \frac{\dx}{\dx x^{(2)}}X^{(2)}_s=1+\int_{t}^s 2 Z_r\frac{\dx}{\dx x^{(2)}} Z_r \dx r, \\
  \end{align*}
  a.s. for $s\in[t,T]$, for almost all $x=(x^{(1)},x^{(2)})\in\mathbb{R}^2$. In particular, the $2\times 2$-matrix $\frac{\dx}{\dx x}X_s$ is invertible if and only if $\frac{\dx}{\dx x^{(2)}}X^{(2)}_s$ ist not $0$. We will see later that it remains positive on the whole interval allowing us to apply the chain rule of Lemma \ref{chainruleambr} in order to write $\frac{\dx}{\dx x}u(s, X_s)\frac{\dx}{\dx x}X_s$.
  But let us first proceed by differentiating the backward equation in \eqref{eq:fbsde2} with respect to $ x^{(2)}$:
  \begin{equation*}
    \frac{\dx}{\dx x^{(2)}}Y_s=-\delta'(X^{(2)}_T)\frac{\dx}{\dx x^{(2)}}X^{(2)}_T-\int_{s}^{T}\frac{\dx}{\dx x^{(2)}}Z_r\dx W_r.
  \end{equation*}
  To be precise the above holds a.s. for every $s\in[t,T]$, for almost all $x=(x^{(1)},x^{(2)})\in\mathbb{R}^2$. 
  
  Now define a stopping time $\tau$ via
  \begin{equation*}
    \tau:=\inf \bigg\{ s \in [t,T]\, : \, \frac{\dx}{\dx x^{(2)}}X^{(2)}_s \leq 0\bigg \}\wedge T.
  \end{equation*}
  For $s\in [t, \tau)$ we have $\frac{\dx}{\dx x}u(s, X_s)\frac{\dx}{\dx x}X_s$ according to the chain rule of Lemma \ref{chainruleambr} and in particular $\frac{\dx}{\dx x^{(2)}}u(s, X^{(1)}_s, X^{(2)}_s)\frac{\dx}{\dx x^{(2)}}X^{(2)}_s=\frac{\dx}{\dx x^{(2)}}Y_s$. Let us set
  \begin{equation*}
    V_s:=\frac{\dx}{\dx x^{(2)}}u(s, X^{(1)}_s, X^{(2)}_s),\,s\in[t,T] \quad\text{ and }\quad \tilde{Z}_r:=\frac{\frac{\dx}{\dx x^{(2)}}Z_r}{\frac{\dx}{\dx x^{(2)}}X^{(2)}_r}\mathbf{1}_{\{r\in [t, \tau)\}}.
  \end{equation*}
  Then the dynamics of $\big(\frac{\dx}{\dx x^{(2)}}X^{(2)}_s\big)^{-1}$ can be expressed by
  \begin{equation}\label{eq:dynamicX}
    \left(\frac{\dx}{\dx x^{(2)}}X^{(2)}_{s\wedge \tilde\tau}\right)^{-1}=1-\int_t^{s\wedge \tilde\tau} 2 Z_r\tilde{Z}_r \left(\frac{\dx}{\dx x^{(2)}}X^{(2)}_r\right)^{-1} \dx r,
  \end{equation}
  for an arbitrary stopping time $\tilde\tau< \tau$ with values in $[t,T]$. We also have $\frac{\dx}{\dx x^{(2)}}Y_s=V_{s}\frac{\dx}{\dx x^{(2)}}X^{(2)}_s$ and thus
  \begin{equation*}
    V_{s}= \frac{\dx}{\dx x^{(2)}}Y_s\bigg(\frac{\dx}{\dx x^{(2)}}X^{(2)}_s\bigg)^{-1}, \quad s \in [t, \tau).
  \end{equation*}
  Applying It\^o's formula and using the dynamics of $\frac{\dx}{\dx x^{(2)}}Y$ and $\frac{\dx}{\dx x^{(2)}}X^{(2)}$  we easily obtain an equation describing the dynamics of $V_{s\wedge \tilde\tau}$:
  \begin{align}\label{Vdyn}
    V_{s\wedge \tilde\tau} &= V_t + \int_t^{s\wedge \tilde\tau } -2 Z_r \tilde{Z}_r \bigg(\frac{\dx}{\dx x^{(2)}}X^{(2)}_r\bigg)^{-1} \frac{\dx}{\dx x^{(2)}} Y_r \dx r
               + \int_t^{s\wedge \tilde\tau} \frac{\dx}{\dx x^{(2)}}Z_r \bigg(\frac{\dx}{\dx x^{(2)}}X^{(2)}_r\bigg)^{-1}  \dx W_r \nonumber\\
        &= V_t + \int_t^{s\wedge \tilde\tau} (- 2Z_r V_r) \tilde{Z}_r \dx r + \int_t^{s\wedge \tilde\tau}\tilde{Z}_r\dx W_r
  \end{align}
  for any stopping time $\tilde\tau< \tau$ with values in $[t,T]$.
  
  Note that,  since $V$ and $(-2ZV)$ are bounded processes, $\tilde{Z}\mathbf{1}_{[\cdot\leq\tilde\tau]}$ is in $BMO(\mathbb{P})$ according to Theorem \ref{BSDEBMO} with a  $BMO(\mathbb{P})$-norm which does not depend on $\tilde\tau< \tau$, and so in particular $\mathbb{E}[\int_t^\tau \vert 2Z_r\tilde Z_r \vert^2 \dx r ] < \infty$. From \eqref{eq:dynamicX} we can actually deduce that $\tau = T$ must hold almost surely. Indeed, \eqref{eq:dynamicX} implies that
  \begin{equation*}
    \left(\frac{\dx}{\dx x^{(2)}}X^{(2)}_{s\wedge \tilde\tau}\right)^{-1}= \exp\bigg(-\int_t^{s\wedge \tilde\tau} 2 Z_r\tilde{Z}_r  \dx r\bigg)
  \end{equation*}
  or equivalently
  \begin{equation*}
    \frac{\dx}{\dx x^{(2)}}X^{(2)}_{s\wedge \tilde\tau}= \exp\bigg(\int_t^{s\wedge \tilde\tau} 2 Z_r\tilde{Z}_r  \dx r\bigg)
  \end{equation*}
  for all stopping times $\tilde\tau< \tau$ with values in $[t,T]$. Using continuity of $s\mapsto \frac{\dx}{\dx x^{(2)}}X^{(2)}_{s}$ we obtain
  \begin{equation*}
    \frac{\dx}{\dx x^{(2)}}X^{(2)}_{\tau}= \exp\bigg(\int_t^{\tau} 2 Z_r\tilde{Z}_r  \dx r\bigg)>0,
  \end{equation*}
  which gives us $\tau = T$ a.s. because $\{\tau<T\}\subset \big\{ \frac{\dx}{\dx x^{(2)}}X^{(2)}_{\tau}=0 \big\}$, due to continuity of $\frac{\dx}{\dx x^{(2)}}X^{(2)}$.\\
  Hence, we have that $\frac{\dx}{\dx x^{(2)}}X^{(2)}$ is positive on $[t,T]$ and therefore $\frac{\dx}{\dx x}X$ is invertible on $[t,T]$.
 
  Setting $\tilde{W}_s:=W_s-\int_t^s2Z_rV_r\dx r$, $s\in[t,T]$, we can reformulate \eqref{Vdyn} to
  \begin{equation*}
    V_s=V_t+\int_t^s\tilde{Z}_r\dx \tilde{W}_r.
  \end{equation*}
  This means that $V_s$ can be viewed as the conditional expectation of
  \begin{equation*}
    V_T=\frac{\dx}{\dx x^{(2)}}u(T, X^{(1)}_T, X^{(2)}_T)=-\delta'(X^{(2)}_T)
  \end{equation*}
  with respect to $\mathcal{F}_s$ and some probability measure, which turns $\tilde{W}$ into a Brownian motion on $[t,T]$. Note here that $2Z_rV_r$ is bounded on $[t,T]$ because $\vert\vert Z \vert\vert_{\infty} < \infty$. Hence, we conclude that $V_t$ and therefore $\frac{\dx}{\dx x^{(2)}}u(t, x^{(1)}, x^{(2)})$ is bounded by $\|\delta'\|_\infty$ for almost all $x=(x^{(1)},x^{(2)})\in\mathbb{R}^2$. This value is independent of $t$. 
  
  Secondly, we have to bound $\frac{\dx}{\dx x^{(1)}}u(t, x^{(1)}, x^{(2)})$. To this end we differentiate the equations in \eqref{eq:fbsde2} with respect to $x^{(1)}$:
  \begin{align*}
    & \frac{\dx}{\dx x^{(1)}}X^{(1)}_s=1,\quad  \frac{\dx}{\dx x^{(1)}}X^{(2)}_s=\int_{t}^s 2 Z_r\frac{\dx}{\dx x^{(1)}} Z_r \dx r, \allowdisplaybreaks \\
    & \frac{\dx}{\dx x^{(1)}}Y_s= g'(X^{(1)}_T) - \delta'(X^{(2)}_T)\frac{\dx}{\dx x^{(1)}}X^{(2)}_T-\int_{s}^{T}\frac{\dx}{\dx x^{(1)}}Z_r\dx W_r, \\
    & \frac{\dx}{\dx x^{(1)}}u(s, X^{(1)}_s, X^{(2)}_s) + \frac{\dx}{\dx x^{(2)}}u(s, X^{(1)}_s, X^{(2)}_s)\frac{\dx}{\dx x^{(1)}}X^{(2)}_s=\frac{\dx}{\dx x^{(1)}}Y_s,
  \end{align*}
  and define
  \begin{equation*}
    U_s:=\frac{\dx}{\dx x^{(1)}}u(s, X^{(1)}_s, X^{(2)}_s) \quad \text{and}\quad
    \check{Z}_r:=\frac{\dx}{\dx x^{(1)}}Z_r-\tilde{Z}_r\frac{\dx}{\dx x^{(1)}}X^{(2)}_r.
  \end{equation*}
  Note that
  \begin{align*}
    \frac{\dx}{\dx x^{(1)}}X^{(2)}_s=\int_{t}^s 2 Z_r\left(\check{Z}_r+\tilde{Z}_r\frac{\dx}{\dx x^{(1)}}X^{(2)}_r\right)\dx r\quad \text{and} \quad
    U_s=\frac{\dx}{\dx x^{(1)}}Y_s-V_s\frac{\dx}{\dx x^{(1)}}X^{(2)}_s,
  \end{align*}
  which allows us to deduce the dynamics of $U$ from the dynamics of $\frac{\dx}{\dx x^{(1)}}Y$, $\frac{\dx}{\dx x^{(1)}}X^{(2)}$ and $V$ using It\^o formula:
  \begin{align}\label{Udyn}
    U_s =& U_t + \int_t^s 1 \dx \bigg( \frac{\dx}{\dx x^{(1)}}Y_r\bigg) - \int_t^s V_r \dx \bigg( \frac{\dx}{\dx x^{(1)}}X^{(2)}_r\bigg)
           - \int_t^s \frac{\dx}{\dx x^{(1)}}X^{(2)}_r  \dx V_r \nonumber\\\
        =& U_t + \int_t^s \frac{\dx}{\dx x^{(1)}}Z_r\dx W_r- 2\int_t^s V_r Z_r\left(\check{Z}_r+\tilde{Z}_r\frac{\dx}{\dx x^{(1)}}X^{(2)}_r\right) \dx r \nonumber\\\
         & -\int_t^s \frac{\dx}{\dx x^{(1)}}X^{(2)}_r\left( - 2Z_r V_r\tilde{Z}_r \dx r+\tilde{Z}_r\dx W_r\right)\nonumber\\
        =& U_t + \int_t^s (-2Z_rV_r\check{Z}_r)\dx r+\int_t^s\check{Z}_r\dx W_r
        = U_t + \int_t^s\check{Z}_r\dx \tilde W_r.
  \end{align}
  By the same argument as for the process $V$ we deduce that $U$ and therefore $\frac{\dx}{\dx x^{(1)}}u(t, x^{(1)}, x^{(2)})$ is bounded by $\|g'\|_\infty=L_g$ for almost all $(x^{(1)}, x^{(2)})$, where $L_g$ is the Lipschitz constant of $g$, i.e. the infimum of all Lipschitz constants. 
 
  This shows that $I^M_{\mathrm{max}}=[0,T]$. 
  
  Finally, Lemma \ref{uniqXYZM} shows that there is a unique solution $(X,Y,Z)$ to the FBSDE on $[0,T]$ for any initial value $(X_0^{(1)},X_0^{(2)})=(x^{(1)},x^{(2)})\in\mathbb{R}^2$ such that
  \begin{equation*}
    \sup_{s\in[0,T]}\mathbb{E}[|X_s|^2]+\sup_{s\in[0,T]}\mathbb{E}[|Y_s|^2]+\|Z\|_\infty<\infty,
  \end{equation*}
  which is equivalent to the simpler condition $\|Z\|_\infty<\infty$ as we claim.

  If $\|Z\|_\infty<\infty$, then according to the forward equation
  \begin{align*}
    \|X^{(2)}\|_\infty<\infty  \quad\text{and}\quad
    \sup_{s\in[0,T]}\mathbb{E}[|X_s|^2]=|x^{(1)}|^2+\sup_{s\in[0,T]}\mathbb{E}[|W_s|^2]=|x^{(1)}|^2+T<\infty,
  \end{align*}
  and according to the backward equation together with the Minkowski inequality
  \begin{align*}
    \bigg(\sup_{s\in[0,T]}\mathbb{E}[|Y_s|^2]\bigg)^{\frac{1}{2}}
    &= \left(\sup_{s\in[0,T]}\mathbb{E}\left[\left|\mathbb{E}\left[ g(X^{(1)}_T)- \delta (X^{(2)}_T)\bigg|\mathcal{F}_s\right]\right|^2\right]\right)^{\frac{1}{2}}  \\
    &\leq\left(\mathbb{E}\left[\left| g(X^{(1)}_T)- \delta (X^{(2)}_T)\right|^2\right]\right)^{\frac{1}{2}} \\
    &\leq|g(0)|+L_g\left(\mathbb{E}\left[\left| X^{(1)}_T\right|^2\right]\right)^{\frac{1}{2}}+|\delta(0)|+L_\delta\left(\mathbb{E}\left[\left| X^{(2)}_T\right|^2\right]\right)^{\frac{1}{2}}<\infty,
  \end{align*}
  where $L_g$ and $L_\delta$ are Lipschitz constants of $g$ and $\delta$, respectively.
\end{proof}

In the next lemma we investigate the properties of the control process $Z$ which was obtained in Lemma \ref{lem:ext}.

\begin{lemma}\label{lem:Zcontrol}
  Assume that $\delta$ and $g$ are Lipschitz continuous. Let $u$ be the unique weakly regular Markovian decoupling field associated to the problem \eqref{eq:fbsde2} on $[0,T]$ constructed in Lemma \ref{lem:ext}. Then for any $t\in[0,T)$ and initial condition $(X_t^{(1)},X_t^{(2)})=(x^{(1)},x^{(2)})\in\mathbb{R}^2$ the associated process $Z$ on $[t,T]$ satisfies $\|Z\|_\infty\leq L_{g}=\|g'\|_\infty$.
  
  Furthermore, if the weak derivative $\frac{\dx}{\dx x^{(1)}}u$ has a version which is continuous in the first two components $(s,x^{(1)})$ on $[t,T)\times\mathbb{R}^2$ then $Z_s(\omega)=\frac{\dx}{\dx x^{(1)}}u\big(s, X^{(1)}_s(\omega), X^{(2)}_s(\omega)\big)$ for almost all $(s,\omega)\in[t,T]\times\Omega$.
\end{lemma}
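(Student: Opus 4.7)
The structure of the FBSDE \eqref{eq:fbsde2} is such that the diffusion matrix is $\sigma = (1, 0)^{\top}$: only $X^{(1)}$ carries a Brownian component, while $X^{(2)}$ evolves in bounded variation through $\dx X^{(2)}_s = Z_s^2 \dx s$. Consequently, a formal It\^o expansion of $Y_s = u(s, X^{(1)}_s, X^{(2)}_s)$ would place $\frac{\dx}{\dx x^{(1)}} u(s, X^{(1)}_s, X^{(2)}_s)$ as the Brownian integrand, and matching this against the BSDE representation $Y_s = Y_T - \int_s^T Z_r \dx W_r$ would yield
\begin{equation*}
Z_s = \frac{\dx}{\dx x^{(1)}} u(s, X^{(1)}_s, X^{(2)}_s) \qquad \text{a.e.}
\end{equation*}
Since the proof of Lemma \ref{lem:ext} already established $\bigl|\frac{\dx}{\dx x^{(1)}} u\bigr| \leq \|g'\|_\infty = L_g$, this single identity would yield both assertions at once. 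The plan is therefore to prove the two parts through distinct regularization arguments, because the needed It\^o expansion is not available for the weakly differentiable $u$ directly.

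For the uniform bound $\|Z\|_\infty \leq L_g$, I would approximate $g$ and $\delta$ by $C^{\infty}_b$ functions $g^n, \delta^n$ whose Lipschitz constants remain controlled by $L_g$ and $L_\delta$. For such smooth data the corresponding Markovian decoupling field $u^n$ enjoys the regularity required to apply a classical It\^o formula, giving $Z^n_s = \frac{\dx}{\dx x^{(1)}} u^n(s, X^{(1),n}_s, X^{(2),n}_s)$. Inspecting the proof of Lemma \ref{lem:ext}, the bound on $\frac{\dx}{\dx x^{(1)}} u^n$ depends only on $L_g$ and $L_\delta$, so that $\|Z^n\|_\infty \leq L_g$ uniformly in $n$. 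Passage to the limit via stability of FBSDE solutions in $\mathbb{S}^2 \times \mathbb{H}^2$, coupled with the uniqueness statement of Lemma \ref{uniqXYZM}, transfers the sup-norm bound to $Z$.

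For the pointwise identity under the continuity hypothesis on $\frac{\dx}{\dx x^{(1)}} u$, I would mollify $u$ in the $x^{(1)}$-variable only, via $u^\epsilon(s, x^{(1)}, x^{(2)}) := \int \rho_\epsilon(x^{(1)} - y)\, u(s, y, x^{(2)}) \dx y$, and apply an It\^o-type formula to $u^\epsilon(s, X^{(1)}_s, X^{(2)}_s)$. Such a formula is available because $u^\epsilon$ is smooth in $x^{(1)}$, while the other arguments move along absolutely continuous paths and the remaining weak derivatives of $u$ are essentially bounded (via Lemma \ref{lem:ext}). The continuity hypothesis yields $\frac{\dx}{\dx x^{(1)}} u^\epsilon \to \frac{\dx}{\dx x^{(1)}} u$ locally uniformly along the path, and passing $\epsilon \downarrow 0$ together with uniqueness of the martingale representation of $Y$ identifies $Z_s$ with $\frac{\dx}{\dx x^{(1)}} u(s, X^{(1)}_s, X^{(2)}_s)$ for almost every $(s, \omega)$.

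The main technical obstacle I anticipate is the generalized It\^o formula: $u$ is only weakly differentiable in $s$ and $x^{(2)}$, so one has to invoke a change-of-variables result that accommodates weak derivatives integrated against absolutely continuous paths. The saving grace is that $\frac{\dx}{\dx x^{(2)}} u$ is essentially bounded by $\|\delta'\|_\infty$ and that $\dx X^{(2)}_s = Z_s^2 \dx s$ with $Z$ bounded, so the resulting drift integrals are well-defined. All such drift contributions must cancel against each other, because the BSDE has no driver, leaving only the Brownian integrand to be identified as $\frac{\dx}{\dx x^{(1)}} u(s, X^{(1)}_s, X^{(2)}_s)$.
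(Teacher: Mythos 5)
Your proposal takes a genuinely different route from the paper's, and it has gaps that would require substantial additional work to close.

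The paper avoids applying any It\^o formula to $u$ altogether. Instead, it picks a Lebesgue point $s$ of $Z$, computes $\frac{1}{h}\mathbb{E}[Y_{s+h}(W_{s+h}-W_s)\mid\mathcal{F}_s]$ in two ways, and lets $h\downarrow 0$. On one side, It\^o applied to the semimartingale $Y\cdot(W-W_s)$ gives $Z_s$; on the other, the decoupling condition $Y_{s+h}=u(s+h,X^{(1)}_{s+h},X^{(2)}_{s+h})$ is split by freezing the $x^{(2)}$-slot, and Gaussian integration by parts (Stein's identity, valid for weakly differentiable functions) converts the first piece into an average of $\frac{\dx}{\dx x^{(1)}}u$ against the Gaussian kernel, while the second piece vanishes at rate $O(h)$ using Lipschitz continuity in $x^{(2)}$ and $X^{(2)}_{s+h}-X^{(2)}_s=\int_s^{s+h}Z_r^2\dx r$ with $Z$ bounded. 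This produces both the bound $\|Z\|_\infty\leq\|g'\|_\infty$ and, under the continuity hypothesis on $\frac{\dx}{\dx x^{(1)}}u$, the identification, with no regularization of the data or the decoupling field.

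Your proposal, by contrast, relies on three steps that are not available in this framework. First, you assert that for mollified $g^n,\delta^n$ the decoupling field $u^n$ is regular enough for classical It\^o; but the forward diffusion here is $\sigma=(1,0)^\top$, so the associated quasi-linear PDE is degenerate in the $x^{(2)}$-direction, and neither Krylov-type interior estimates nor the four step scheme guarantee $C^{1,2}$ regularity. Nothing in the paper (or in the standard decoupling-field theory it cites) establishes classical smoothness of $u^n$ even for $C^\infty_b$ data. Second, the "passage to the limit via stability in $\mathbb{S}^2\times\mathbb{H}^2$" for a fully coupled FBSDE under perturbation of the terminal condition and of the forward drift through $Z$ is a nontrivial statement in its own right and is not proved anywhere in the paper; Lemma \ref{uniqXYZM} gives uniqueness with bounded $Z$, not stability. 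Third, for the identification step, the generalized It\^o formula you invoke for $u^\epsilon(s,X^{(1)}_s,X^{(2)}_s)$ needs control on the time regularity of $u$, but the paper only establishes that $u$ is continuous and weakly differentiable in $x$; no form of weak differentiability or absolute continuity in $s$ is proved. Each of these gaps would need its own argument, whereas the paper's elementary conditional-expectation computation sidesteps all of them.
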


\begin{proof}
  We already know that $Z$ is bounded according to Lemma \ref{lem:ext}, but not in the form of the more explicit bound $\|Z\|_\infty\leq L_{g}$. 
  
  Notice that $\lim_{h\downarrow 0}\frac{1}{h}\int_{s}^{s+h} Z_r(\omega)\dx r=Z_{s}(\omega)$ for almost all $(\omega,s)\in\Omega\times[t,T)$ due to the fundamental theorem of Lebesgue integral calculus. 
  
  Now take some $s\in [t,T)$ such that  $\lim_{h\downarrow 0}\frac{1}{h}\int_{s}^{s+h} Z_r\dx r=Z_{s}$ almost surely. Almost all $s\in [t,T)$ have this property. Choose any $h>0$ such that $s+h<T$ and consider the expression $\frac{1}{h}\mathbb{E}[Y_{s+h}(W_{s+h}-W_s)|\mathcal{F}_s]$ for small $h>0$. On the one hand we can write using It\^o's formula
  \begin{equation*}
  Y_{s+h}(W_{s+h}-W_s)=\int_s^{s+h}Y_{r}\dx W_{r}+\int_s^{s+h}(W_{r}-W_s)Z_{r}\dx W_{r}+\int_s^{s+h}Z_{r}\dx r,
  \end{equation*}
  which leads to
  \begin{equation*}
    \frac{1}{h}\mathbb{E}[Y_{s+h}(W_{s+h}-W_s)|\mathcal{F}_s]=\frac{1}{h}\mathbb{E}\left[\int_s^{s+h}Z_{r}\dx r\bigg|\mathcal{F}_s\right]\to Z_s\quad\text{as}\quad h\to 0.
  \end{equation*}
  On the other hand we can use the decoupling condition to write
  \begin{align*}
    Y_{s+h}(W_{s+h}-W_s)=&u\left(s+h,X^{(1)}_{s+h},X^{(2)}_{s+h}\right)(W_{s+h}-W_s)\\
    =&u\left(s+h,X^{(1)}_{s+h},X^{(2)}_{s}\right)(W_{s+h}-W_s)\\
     &+\left(u\left(s+h,X^{(1)}_{s+h},X^{(2)}_{s+h}\right)-u\left(s+h,X^{(1)}_{s+h},X^{(2)}_{s}\right)\right)(W_{s+h}-W_s).
  \end{align*}
  After applying conditional expectations to both sides of the above equation we investigate the two summands on the right hand side separately. 
  
  \textsc{First summand:} Let us recall that $X^{(1)}_{s}$ and $X^{(2)}_{s}$ are $\mathcal{F}_s$-measurable, $X^{(1)}_{s+h}=X^{(1)}_{s}+(W_{s+h}-W_s)$, $W_{s+h}-W_s$ is independent of $\mathcal{F}_s$, and $u$ is deterministic. These properties imply
  \begin{align*}
    \mathbb{E}\bigg[u\bigg(s+h,X^{(1)}_{s+h},X^{(2)}_{s}\bigg)(W_{s+h}-W_s)&\Big|\mathcal{F}_s\bigg]
    =\int_{\mathbb{R}}u\bigg(s+h,X^{(1)}_{s}+z\sqrt{h},X^{(2)}_{s}\big)\frac{z\sqrt{h}}{\sqrt{2\pi}}e^{-\frac{1}{2}z^2}\dx z\\
    &=\int_{\mathbb{R}}\frac{\dx}{\dx x^{(1)}}u\bigg(s+h,X^{(1)}_{s}+z\sqrt{h},X^{(2)}_{s}\bigg)\frac{h}{\sqrt{2\pi}}e^{-\frac{1}{2}z^2}\dx z,
  \end{align*}
  which means
  \begin{equation*}
    \lim_{h\downarrow 0}\frac{1}{h}\mathbb{E}\left[u\left(s+h,X^{(1)}_{s+h},X^{(2)}_{s}\right)(W_{s+h}-W_s)\Big|\mathcal{F}_s\right]=\frac{\dx}{\dx x^{(1)}}u\left(s,X^{(1)}_{s},X^{(2)}_{s}\right),
  \end{equation*}
  if $\frac{\dx}{\dx x^{(1)}}u$ is continuous in the first two components on $[0,T)\times\mathbb{R}^2$. Here we use that $\frac{\dx}{\dx x^{(1)}}u$ is bounded by $\|g'\|_\infty$ according to the proof of Lemma \ref{lem:ext}. But even if $\frac{\dx}{\dx x^{(1)}}u$ is not continuous in the first two components, we can still at least control the value
  \begin{equation*}
    \left|\frac{1}{h}\mathbb{E}\left[u\left(s+h,X^{(1)}_{s+h},X^{(2)}_{s}\right)(W_{s+h}-W_s)\Big|\mathcal{F}_s\right]\right|
  \end{equation*}
  by $\|g'\|_\infty$.
  
  \textsc{Second summand:} For the second summand we recall that $u$ is Lipschitz continuous in the last component with Lipschitz constant $\|\delta'\|_\infty$ and $X^{(2)}_{s+h}=X^{(2)}_{s}+\int_s^{s+h}Z^2_r\dx r$. These properties allow us to estimate
  \begin{align*}
    \frac{1}{h}&\left|\mathbb{E}\left[\left(u\left(s+h,X^{(1)}_{s+h},X^{(2)}_{s+h}\right)-u\left(s+h,X^{(1)}_{s+h},X^{(2)}_{s}\right)\right)(W_{s+h}-W_s)\Big|\mathcal{F}_s\right]\right| \\
    &\leq\frac{1}{h}\mathbb{E}\left[\left|u\left(s+h,X^{(1)}_{s+h},X^{(2)}_{s+h}\right)-u\left(s+h,X^{(1)}_{s+h},X^{(2)}_{s}\right)\right|\cdot |W_{s+h}-W_s|\Big|\mathcal{F}_s\right] \\
    &\leq\frac{1}{h}\mathbb{E}\left[\|\delta'\|_\infty\left(\int_s^{s+h}Z^2_r\dx r\right)\cdot |W_{s+h}-W_s|\Big|\mathcal{F}_s\right]\leq\frac{1}{h}\|\delta'\|_\infty h \|Z\|^2_\infty\mathbb{E}[|W_{s+h}-W_s|],
  \end{align*}
  which clearly tends to $0$ as $h\to 0$.
  
  \textsc{Conclusion:} We have shown
  \begin{equation*}
    Z_s=\lim_{h\downarrow 0}\frac{1}{h}\mathbb{E}[Y_{s+h}(W_{s+h}-W_s)|\mathcal{F}_s]=\lim_{h\downarrow 0}\frac{1}{h}\mathbb{E}\left[u\left(s+h,X^{(1)}_{s+h},X^{(2)}_{s}\right)(W_{s+h}-W_s)\Big|\mathcal{F}_s\right],
  \end{equation*}
  which is identical with $\frac{\dx}{\dx x^{(1)}}u\big(s,X^{(1)}_{s},X^{(2)}_{s}\big)$ a.s. if $\frac{\dx}{\dx x^{(1)}}u$ is continuous in the first two components on $[0,T)\times\mathbb{R}^2$ and bounded by $\|g'\|_\infty$ otherwise.
\end{proof}

In order to formulate the weak solution of the Skorokhod embedding problem in the next theorem,  we use the notations of Section \ref{sec:SEP}. As before we assume that $\beta$ is bounded away from $0$. Under this condition $H^{-1}$ is well-defined and Lipschitz continuous. Therefore, $\delta=\hat{\delta}\circ H^{-1}$ is Lipschitz continuous if $\hat\delta$ is Lipschitz continuous, which is equivalent to $\alpha$ being bounded.

\begin{thm}\label{thm:weak solution}
  Suppose $g$ and $\delta$ are both Lipschitz continuous with Lipschitz constants $L_g$ and $L_\delta$. Then there exist a Brownian motion $B$, a random time $\tilde \tau\leq H^{-1}(L_g^2)$ and a constant $c\in\mathbb{R}$ such that $ c+\int_0^{\tilde\tau}\alpha_s\dx s+\int_0^{\tilde\tau}\beta_s\dx B_s$ has law $\nu$.
\end{thm}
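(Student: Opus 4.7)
The plan is to assemble Lemma \ref{lem:ext}, Lemma \ref{lem:Zcontrol} and Lemma \ref{lem:weak}, which between them already contain essentially every analytical ingredient; the theorem itself is mostly a bookkeeping step that tracks the explicit bound on $\tilde\tau$.

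First, I would specialize to $T=1$ and apply Lemma \ref{lem:ext} to the FBSDE \eqref{eq:fbsde2} with deterministic initial condition $(x^{(1)},x^{(2)})=(0,0)$. This yields a unique weakly regular Markovian decoupling field $u$ on $[0,1]$, which is in particular deterministic, together with processes $(X^{(1)},X^{(2)},Y,Z)$ satisfying the system. Because $X^{(1)}_1=W_1$ and $X^{(2)}_1=\int_0^1 Z_s^2\dx s$, the pair $(Y,Z)$ solves equation \eqref{eq:fbsde} with $\|Z\|_\infty<\infty$. Lemma \ref{lem:Zcontrol} then refines this to the explicit bound $\|Z\|_\infty\leq L_g$, and, as observed at the end of the proof of Lemma \ref{lem:ext}, this implies $Y\in\mathbb{S}^2(\mathbb{R})$ and $Z\in\mathbb{H}^2(\mathbb{R})$, which are exactly the hypotheses required by Lemma \ref{lem:weak}.

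Second, I would invoke Lemma \ref{lem:weak}, which produces (on a possibly enlarged probability space) a Brownian motion $B$ and a random time $\tilde\tau=H^{-1}\bigl(\int_0^1 Z_s^2\dx s\bigr)$ with $\mathbb{E}[\tilde\tau]<\infty$ such that
\begin{equation*}
  Y_0+G_0+\int_0^{\tilde\tau}\alpha_s\dx s+\int_0^{\tilde\tau}\beta_s\dx B_s=g(W_1).
\end{equation*}
Since $u$ is deterministic and the initial condition is deterministic, $Y_0=u(0,0,0)\in\mathbb{R}$ is a genuine constant, so setting $c:=Y_0+G_0$ is legitimate. By construction of $g$ in the first step of the algorithm, $g(W_1)$ has law $\nu$, which identifies the law of $c+\int_0^{\tilde\tau}\alpha_s\dx s+\int_0^{\tilde\tau}\beta_s\dx B_s$ as $\nu$.

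Finally, the quantitative bound $\tilde\tau\leq H^{-1}(L_g^2)$ is immediate from $\|Z\|_\infty\leq L_g$: indeed, $\int_0^1 Z_s^2\dx s\leq L_g^2$ pointwise, and since $H^{-1}$ is monotonically increasing the bound propagates. There is no real obstacle here; the only thing to verify carefully is that the weak regularity and deterministic nature of $u$ make $Y_0$ a genuine constant, so that the embedded variable can be written in the desired form $c+G_{\tilde\tau}^B$ with $c\in\mathbb{R}$ rather than as a random shift.
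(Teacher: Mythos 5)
Your proof is correct and follows the paper's own argument essentially verbatim: Lemma \ref{lem:ext} produces the solution, Lemma \ref{lem:Zcontrol} refines the bound to $\|Z\|_\infty\leq L_g$, and Lemma \ref{lem:weak} delivers $B$ and $\tilde\tau$, with the bound $\tilde\tau\leq H^{-1}(L_g^2)$ propagating through the monotone $H^{-1}$. You are in fact slightly more careful than the paper on one point: the paper writes $c:=Y_0$, whereas Lemma \ref{lem:weak} gives $Y_0+G_0+\int_0^{\tilde\tau}\alpha_s\dx s+\int_0^{\tilde\tau}\beta_s\dx B_s=g(W_1)$, so the correct constant matching the theorem's statement is $c=Y_0+G_0$ as you write, and your remark that the deterministic nature of $u$ makes $Y_0$ a genuine real number is exactly what legitimizes this.
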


\begin{proof}
  Using Lemma \ref{lem:ext} FBSDE \eqref{eq:fbsde} can be solved and according to Lemma \ref{lem:Zcontrol} the corresponding $Z$ is bounded by $L_g$. Due to Lemma \ref{lem:weak} there is a constant $c:=Y_0$, a Brownian motion $B$ and random time $\tilde\tau$ with the required properties.
  
  Moreover, $\tilde \tau= H^{-1}\big(\int_0^1 Z^2_s \dx s\big)$ is bounded by $H^{-1}(L^2_g)$ since $Z$ is bounded by $L_g$ and $H^{-1}$ is increasing.
\end{proof}

\begin{remark}
  It is a priori not clear that the random time $\tilde\tau$ is also a stopping time with respect to $\left(\mathcal{F}^B_s\right)_{s\in[0,\infty)}:=\big(\sigma(B_r,r\in[0,s])\big)_{s\in [0,\infty)}$ as also mentioned in Remark 1.2 in \cite{Ankirchner2008}. Therefore, we shall prove a sufficient criterion for this in terms of regularity properties of the Markovian decoupling field $u$.
\end{remark}

\begin{remark}
  The boundedness of the stopping time solving the SEP has not been investigated so frequently. However, very recently it gained attention in \cite{Ankirchner2011} and \cite{Ankirchner2015}. Especially, its economic interest comes from its applications in the context of game theory (see \cite{Seel2013}).
\end{remark}

\subsection{Strong solution}

This subsection is devoted to the {\bf fourth step} of our algorithm, i.e. to translate the results of the preceding section into a solution of the Skorokhod embedding problem in the strong sense.

Our main goal is to show that if $g$ and $\delta$ are sufficiently smooth, then $\tilde\tau$ and $B$ constructed so far have the property that $\tilde\tau$ is indeed a stopping time with respect to the filtration $\left(\mathcal{F}^B_s\right)_{s\in[0,\infty)}$ generated by the Brownian motion $B$, and thus a functional of the trajectories of $B$. The same functional applied to the trajectories of the original Brownian motion $W$ will then provide the strong solution. For this purpose, we assume that $g$ and $\delta$ are three times weakly differentiable with bounded derivatives. We also require that $g$ is non-decreasing and not constant. Our arguments shall be based on a deep analysis of regularity properties of the associated decoupling field $u$. In the whole subsection we denoted by $u$ the unique weakly regular Markovian decoupling field to the problem \eqref{eq:fbsde2} as constructed in Lemma \ref{lem:ext}, assume for convenience $T=1$ and use the notation as in Section \ref{sec:SEP}.

\begin{thm}\label{thm:mainresult}
  Assume that $\frac{\dx}{\dx x^{(1)}}u$ is $\mathbb{R}\backslash\{0\}$-valued on $[0,1)\times\mathbb{R}^2$ and Lipschitz continuous in the first two components on compact subsets of $[0,1)\times\mathbb{R}^2$. Then $\tilde\tau$ is a stopping time with respect to the filtration $(\mathcal{F}^B_\cdot)=(\mathcal{F}^B_s)_{s\in[0,\infty)}$.
\end{thm}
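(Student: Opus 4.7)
The plan is to transport the problem into the time-changed scale driven by $B$ and exhibit a closed SDE--ODE system for $(\sigma_r, W_{\sigma_r})$ whose coefficients are locally Lipschitz by the hypothesis. Pathwise uniqueness will then force $\sigma$ to be adapted to $(\mathcal{F}^B_r)$, so that $\tilde\tau=\inf\{r:\sigma_r=1\}$ becomes a hitting time of an adapted continuous process, and therefore an $(\mathcal{F}^B_r)$-stopping time.

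First I would derive the system. From the definition of $\sigma$ in Lemma \ref{lem:weak} we have $X^{(2)}_{\sigma_r}=H(r)$ for $r\in[0,\tilde\tau]$, so the decoupling condition gives $Y_{\sigma_r}=u(\sigma_r,W_{\sigma_r},H(r))$. Combining the identity $\beta_r\dx B_r=\dx Y_{\sigma_r}=Z_{\sigma_r}\dx W_{\sigma_r}$ with Lemma \ref{lem:Zcontrol} (applicable since $\tfrac{\dx}{\dx x^{(1)}}u$ is assumed continuous in the first two components on $[0,1)\times\mathbb{R}^2$), and differentiating the time-change identity $\int_0^{\sigma_r}Z^2_s\dx s=H(r)$, I obtain, writing $\Xi_r:=W_{\sigma_r}$,
\begin{align*}
  \dx\Xi_r&=\frac{\beta_r}{\tfrac{\dx}{\dx x^{(1)}}u(\sigma_r,\Xi_r,H(r))}\dx B_r,\\
  \dx\sigma_r&=\frac{\beta_r^2}{\left(\tfrac{\dx}{\dx x^{(1)}}u(\sigma_r,\Xi_r,H(r))\right)^2}\dx r,
\end{align*}
with $(\Xi_0,\sigma_0)=(0,0)$, valid for $r\in[0,\tilde\tau)$.

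Next I would verify that the hypothesis on $\tfrac{\dx}{\dx x^{(1)}}u$ makes the coefficients of this system locally Lipschitz in $(\sigma,\xi)$ on the open strip $\{(s,x)\in[0,1)\times\mathbb{R}\}$: on any compact subset, $\tfrac{\dx}{\dx x^{(1)}}u$ is Lipschitz and, being continuous and nowhere zero, bounded away from $0$; therefore its reciprocal is Lipschitz as well. Introducing stopping times $\tau_n:=\inf\{r:\sigma_r\geq 1-\tfrac{1}{n}\text{ or }|\Xi_r|\geq n\}$ reduces matters on each $[0,\tau_n]$ to an SDE with globally Lipschitz coefficients. The pair $(\Xi,\sigma)$ constructed above is, by construction, a weak solution driven by $B$; pathwise uniqueness together with weak existence yields, via Yamada--Watanabe, a strong solution on $[0,\tau_n]$, i.e.\ $(\sigma_r,\Xi_r)_{r\leq\tau_n}$ is $(\mathcal{F}^B_r)$-adapted. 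Since $\sigma_r\uparrow 1$ as $r\uparrow\tilde\tau$ and $\sup_{s\leq 1}|W_s|<\infty$ almost surely, we have $\tau_n\uparrow\tilde\tau$ a.s., so $(\sigma_r)_{r<\tilde\tau}$ is $(\mathcal{F}^B_r)$-adapted. Using monotonicity and continuity of $\sigma$, the identity $\{\tilde\tau\leq r\}=\{\sigma_r\geq 1\}\in\mathcal{F}^B_r$ finishes the argument.

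The main obstacle is the behavior of the coefficient $\beta_r/\tfrac{\dx}{\dx x^{(1)}}u$ as $\sigma_r\uparrow 1$: the hypothesis is imposed only on $[0,1)\times\mathbb{R}^2$ and excludes the boundary, so no global Lipschitz bound is available, and a pure Yamada--Watanabe argument on $[0,\tilde\tau]$ fails. The localization via the $\tau_n$, combined with the a priori control $\sigma_r\leq 1$ and the boundedness in probability of $W_{\sigma_r}$ inherited from $W$ on $[0,1]$, is what allows the strong solvability to propagate up to $\tilde\tau$. A secondary subtlety is making sure that the strong solvability on $[0,\tau_n]$ does not pick up unwanted information from the auxiliary Brownian motion $\tilde B$ used beyond $\tilde\tau$; this is immediate since $B$ coincides with $\int_0^\cdot\beta_s^{-1}\dx Y_{\sigma_s}$ on $[0,\tilde\tau]$ and the SDE is solved strictly inside this interval.
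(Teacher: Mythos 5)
Your proposal follows essentially the same route as the paper: derive the closed SDE--ODE system for the pair $(\sigma_r, W_{\sigma_r})$ driven by $B$, observe that the hypothesis on $\tfrac{\dx}{\dx x^{(1)}}u$ makes the coefficients locally Lipschitz (with the reciprocal controlled because the derivative is continuous and nowhere zero on compacts), localize so as to stay in a compact region, identify the localized process with the unique adapted solution of a Lipschitz SDE, and finally write $\tilde\tau$ as a supremum of stopping times. Two small points worth tightening in the write-up: the $\tau_n$ are defined as hitting times of the a priori non-$(\mathcal{F}^B_\cdot)$-adapted pair $(\sigma,\Xi)$, so one must first construct the adapted strong solution of a globally Lipschitz modification of the SDE, show $(\sigma,\Xi)$ coincides with it on the localized interval, and only then read off $\tau_n$ as a hitting time of an adapted process (this is exactly how the paper breaks the circularity, and the Yamada--Watanabe route needs the same scaffolding); and the closing identity $\{\tilde\tau\leq r\}=\{\sigma_r\geq 1\}$ invokes adaptedness of the extension of $\sigma$ past $\tilde\tau$, which has not been established -- the clean conclusion from your own construction is $\tilde\tau=\sup_n\tau_n$, so that $\{\tilde\tau\leq r\}=\bigcap_n\{\tau_n\leq r\}\in\mathcal{F}^B_r$, which is precisely the argument the paper uses.
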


\begin{proof}
  We consider the system \eqref{eq:fbsde2} for $t=0$ and $x^{(1)}=x^{(2)}=0$. According to Lemma \ref{lem:Zcontrol} we can assume $Z=\frac{\dx}{\dx x^{(1)}}u\big(\cdot, X^{(1)}_\cdot, X^{(2)}_\cdot\big)$ and, thereby, we have
  \begin{equation*}
    X^{(2)}_s=\int_0^s Z^2_r\dx r=\int_0^s \left(\frac{\dx}{\dx x^{(1)}}u\left(r, X^{(1)}_r, X^{(2)}_r\right)\right)^2\dx r
  \end{equation*}
  for all $s\in[0,T]$. Hence, we can assume that $X^{(1)}$ starts in $0$, and is Lipschitz continuous and strictly increasing in $s$ due to positivity of $\big(\frac{\dx}{\dx x^{(1)}}u\big)^2$ on $[0,1)\times\mathbb{R}^2$. Therefore, for every $\omega\in\Omega$ the mapping $H^{-1}\big(X^{(2)}_\cdot(\omega)\big) \colon [0,1]\to [0,\infty)$ is Lipschitz continuous and strictly increasing in time and has a continuous and strictly increasing inverse function on the interval $\big[0,H^{-1}\big(X^{(2)}_1(\omega)\big)\big]=[0,\tilde\tau(\omega)]$.
  It is straightforward to see that this inverse is given by the process $\sigma$ from the proof of Lemma \ref{lem:weak}. Let us calculate the weak derivative of $\sigma$: Firstly, note $\big(H^{-1}\big)'(x)=(H'(H^{-1}(x)))^{-1}$ and also $H^{-1}(X^{(2)}_{\sigma_r}(\omega))=r$ or equivalently $X^{(2)}_{\sigma_r}(\omega)=H(r)$. So, we obtain
  \begin{align}\label{eq:sigmadym}
    \frac{\dx}{\dx r}\sigma_r
    = \frac{1}{\left(H^{-1}\right)'\left(X^{(2)}_{\sigma_r}\right)Z^2_{\sigma_r}}
    = \frac{H'(r)}{\left(\frac{\dx}{\dx x^{(1)}}u\left(\sigma_r, X^{(1)}_{\sigma_r}, X^{(2)}_{\sigma_r}\right)\right)^2}
    = \frac{\beta_r^2}{\left(\frac{\dx}{\dx x^{(1)}}u\right)^2\left(\sigma_r, W_{\sigma_r}, H(r)\right)}
  \end{align}
  on $\{\sigma_r<1\}$. Observe at this point that $ \{\sigma_r<1\} = \big\{r<H^{-1}\big(X^{(2)}_1\big)\big\}=\{r<\tilde{\tau}\}$. If we define $\sigma_r:=1$ for $r> \tilde\tau$, then $\sigma$ is still continuous and we have $\tilde\tau=\inf\left\{r\in[0,\infty)\,|\,\sigma_r\geq 1\right\}$. It is also straightforward to see $Z_{\sigma_r}=\frac{\dx}{\dx x^{(1)}}u\left(\sigma_r, W_{\sigma_r}, H(r)\right)$ for $r\in[0,\tilde\tau)$.
  
  Now, remember $B_r=\int_0^r\frac{1}{\beta_s}\dx Y_{\sigma_s}$ for $r\in[0,\tilde\tau]$ and also $Y_s-Y_0=\int_0^s Z_r\dx W_r$ for $s\in[0,1]$, so
  \begin{equation*}
    \int_0^r \frac{\beta_s}{Z_{\sigma_s}}\dx B_{s}=\int_0^r \frac{\beta_s}{Z_{\sigma_s}}\frac{1}{\beta_s}\dx Y_{\sigma_s}=\int_0^r \frac{1}{Z_{\sigma_s}}Z_{\sigma_s}\dx W_{\sigma_s}=W_{\sigma_r}.
  \end{equation*}
  So, if we define $\Sigma_r:=W_{\sigma_r}$, we have the dynamics
  \begin{equation*}
    \Sigma_r=\int_0^r \frac{\beta_s}{\frac{\dx}{\dx x^{(1)}}u\left(\sigma_s, \Sigma_s, H(s)\right)}\dx B_{s},
  \end{equation*}
  for $r\in[0,\tilde\tau)$. Hence, to sum up $\sigma$ and $\Sigma$ fulfill on $[0,\tilde\tau)$ the dynamics
  \begin{align*}
    \sigma_r=\int_0^r \frac{\beta_s^2}{\left(\frac{\dx}{\dx x^{(1)}}u\right)^2\left(\sigma_s, \Sigma_s, H(s)\right)}\dx s\quad\text{and}\quad
    \Sigma_r=\int_0^r \frac{\beta_s}{\frac{\dx}{\dx x^{(1)}}u\left(\sigma_s, \Sigma_s, H(s)\right)}\dx B_{s},
  \end{align*}
  where $r\in[0,\tilde\tau)$. Note that this dynamical system is locally Lipschitz continuous in $(\sigma,\Sigma)$.
  
  Moreover, for any $K_1,K_2>0$ and $K_3\in (0,1)$ define a bounded random variable $\tau_{K_1,K_2,K_3}$ via
  \begin{equation*}
    \tau_{K_1,K_2,K_3}:=K_1\wedge \inf\left\{r\in[0,\infty)\,|\,|\Sigma_r|\geq K_2\right\}\wedge\inf\left\{r\in[0,\infty)\,|\,\sigma_r\geq K_3\right\}.
  \end{equation*}
  Note that $\sigma$ and $\Sigma$ both remain bounded on $[0,\tau_{K_1,K_2,K_3}]$. Therefore, on $[0,\tau_{K_1,K_2,K_3}]$ the pair $(\sigma,\Sigma)$ coincides with the unique solution $(\sigma^{K_1,K_2,K_3},\Sigma^{K_1,K_2,K_3})$ to a Lipschitz problem, which is automatically progressively measurable w.r.t. the filtration $(\mathcal{F}^B_\cdot)$. Note that
  \begin{equation*}
    \tau_{K_1,K_2,K_3}=K_1\wedge \inf\left\{r\in[0,\infty\,\big|\,|\Sigma^{K_1,K_2,K_3}_r|\geq K_2\right\}\wedge \inf\left\{r\in[0,\infty)\,\big|\,\sigma^{K_1,K_2,K_3}_r\geq K_3\right\},
  \end{equation*}
  which is clearly a stopping time w.r.t. $(\mathcal{F}^B_\cdot)$. Furthermore, due to continuity of $\Sigma$ and $\sigma$ we Observe that
  \begin{equation*}
    \tilde\tau=\sup_{K_3\in(0,1),K_1,K_2>0}\tau_{K_1,K_2,K_3},
  \end{equation*}
  which makes it a stopping time with respect to $(\mathcal{F}^B_\cdot)$.
\end{proof}

In order to deduce sufficient conditions for Theorem \ref{thm:mainresult} to hold, we need to investigate higher order derivatives of $u$. For this purpose we consider the following system:
\begin{align}\label{eq:fbsde3}
  X^{(1)}_s &= x^{(1)}+ \int_{t}^s 1 \dx W_r, \qquad \qquad X^{(2)}_s = x^{(2)} +  \int_{t}^s  \left(Z^{(0)}_r\right)^2 \dx r, \notag\allowdisplaybreaks \\
  Y^{(0)}_s &= g(X^{(1)}_T)- \delta (X^{(2)}_T)-\int_{s}^{T}Z^{(0)}_r\dx W_r,  &u^{(0)}(s,X^{(1)}_s,X^{(2)}_s) &= Y^{(0)}_s, \notag \\
  Y^{(1)}_s &= g'(X^{(1)}_T)-\int_{s}^T Z^{(1)}_r \dx W_r-\int_{s}^{T}\left(-2Z^{(0)}_rY^{(2)}_r\right)Z^{(1)}_r\dx r,  &u^{(1)}(s,X^{(1)}_s,X^{(2)}_s) &= Y^{(1)}_s, \notag \\
  Y^{(2)}_s &= -\delta'(X^{(2)}_T)-\int_{s}^T Z^{(2)}_r \dx W_r-\int_{s}^{T}\left(-2Z^{(0)}_rY^{(2)}_r\right)Z^{(2)}_r\dx r,  &u^{(2)}(s,X^{(1)}_s,X^{(2)}_s) &= Y^{(2)}_s.
\end{align}

\begin{lemma}\label{lem:fbsde3}
  Assume that $g$, $\delta$, $g'$ and $\delta'$ are Lipschitz continuous. For the above problem \eqref{eq:fbsde3} we have $I^M_{\mathrm{max}}=[0,T]$. Furthermore, we obtain
  \begin{equation*}
    u^{(0)}=u,\quad u^{(1)}=\frac{\dx}{\dx x^{(1)}}u\quad \textrm{and}\quad u^{(2)}=\frac{\dx}{\dx x^{(2)}}u,\quad a.e.,
  \end{equation*}
  where $u$ is the unique weakly regular Markovian decoupling field to the problem \eqref{eq:fbsde2}. 
  
  In particular, $u$ is twice weakly differentiable w.r.t. $x$ with uniformly bounded derivatives.
\end{lemma}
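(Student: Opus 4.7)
The plan is to treat \eqref{eq:fbsde3} as a single Markovian FBSDE with forward state $(X^{(1)}, X^{(2)}) \in \mathbb{R}^2$ and augmented backward component $(Y^{(0)}, Y^{(1)}, Y^{(2)}) \in \mathbb{R}^3$, and to apply the theory of Section \ref{sec:markovian}. First I verify \textup{(MLLC)}: $\sigma\equiv(1,0)^T$ gives $L_{\sigma,z}^{-1}=\infty$; the drift $\mu$ and driver $f$ are polynomial in $(y,z)$, hence locally Lipschitz on bounded sets in $z$; and $\xi(x)=(g(x^{(1)})-\delta(x^{(2)}),\,g'(x^{(1)}),\,-\delta'(x^{(2)}))$ is Lipschitz by hypothesis. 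Theorem \ref{globalexistM} then yields a unique weakly regular Markovian decoupling field $U^\ast=(u^{(0)},u^{(1)},u^{(2)})$ on $I^{M}_{\mathrm{max}}$.

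Next I identify the three components by uniqueness. The subsystem $(X,Y^{(0)},Z^{(0)})$ of \eqref{eq:fbsde3} is autonomous and coincides with \eqref{eq:fbsde2}, so Lemma \ref{lem:ext} combined with Theorem \ref{uniqM} forces $u^{(0)}=u$ on $I^{M}_{\mathrm{max}}$; in particular $Z^{(0)}$ is a.s.\ bounded by $L_g$ (Lemma \ref{lem:Zcontrol}). For the remaining components I exploit the strong regularity of $u$: for initial data $(t_1,x)\in I^{M}_{\mathrm{max}}\times\mathbb{R}^2$, the derivative processes $U_s:=\frac{\dx}{\dx x^{(1)}}u(s,X_s)$ and $V_s:=\frac{\dx}{\dx x^{(2)}}u(s,X_s)$ were shown in the proof of Lemma \ref{lem:ext}, cf.\ \eqref{Vdyn} and \eqref{Udyn}, to satisfy exactly the BSDEs for $Y^{(1)},Y^{(2)}$ in \eqref{eq:fbsde3}. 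Since the coefficient $-2Z^{(0)}Y^{(2)}$ in those drivers is bounded, uniqueness for the resulting linear BSDEs (or Lemma \ref{uniqXYZM} applied after one has a bounded-$Z$ solution) gives $Y^{(1)}=U$, $Y^{(2)}=V$, and hence $u^{(1)}(t,\cdot)=\frac{\dx}{\dx x^{(1)}}u(t,\cdot)$, $u^{(2)}(t,\cdot)=\frac{\dx}{\dx x^{(2)}}u(t,\cdot)$ a.e.\ on $I^{M}_{\mathrm{max}}$.

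The crux is proving $I^{M}_{\mathrm{max}}=[0,T]$. By Lemma \ref{explosionM} (with $L_{\sigma,z}^{-1}=\infty$) it suffices to rule out $L_{U^\ast(t,\cdot),x}\to\infty$ as $t\downarrow t^{M}_{\mathrm{min}}$. The component $u^{(0)}$ is uniformly Lipschitz in $x$ by Lemma \ref{lem:ext}. For $u^{(1)},u^{(2)}$ I differentiate the BSDEs of \eqref{eq:fbsde3} in $x$ once more; since $g'$ and $\delta'$ are Lipschitz, the weak derivatives $g''$ and $\delta''$ exist and are bounded, so the resulting linear BSDEs for $\frac{\dx}{\dx x}Y^{(1)}$ and $\frac{\dx}{\dx x}Y^{(2)}$ have bounded terminal data. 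The coefficient $\lambda:=-2Z^{(0)}Y^{(2)}$ is bounded (both factors are), hence $\int_0^\cdot\lambda\dx W$ is in BMO, and under the corresponding Girsanov change of measure these derivative BSDEs reduce to martingales modulo bounded drift perturbations, with terminal values controlled by $\|g''\|_\infty$ and $\|\delta''\|_\infty$. The BMO estimates of Section \ref{sec:appendix} then yield uniform-in-$t$ bounds on $L_{u^{(1)},x}$ and $L_{u^{(2)},x}$, excluding the singular scenario.

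The final conclusion follows at once: $u^{(i)}=\frac{\dx}{\dx x^{(i)}}u$ is a component of a weakly regular Markovian decoupling field on $[0,T]$, hence bounded and Lipschitz in $x$ uniformly in $s$, so $u$ is twice weakly differentiable in $x$ with uniformly bounded first and second derivatives. The main obstacle is evidently the third step: differentiating the coupled nonlinear system a second time and controlling the resulting derivative BSDEs (whose terminal data involve $g''$ and $\delta''$, defined only a.e.) via BMO-stable Girsanov transformations, in such a way that the bounds depend only on $L_g$, $L_\delta$, $\|g''\|_\infty$, $\|\delta''\|_\infty$ and $T$, not on $t\in I^{M}_{\mathrm{max}}$.
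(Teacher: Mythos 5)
Your overall architecture is correct and matches the paper's: verify \textup{(MLLC)} for \eqref{eq:fbsde3}, invoke Theorem~\ref{globalexistM}, identify $u^{(0)}=u$, $Y^{(1)}=U$, $Y^{(2)}=V$ by uniqueness arguments, and then rule out $I^{M}_{\mathrm{max}}\neq[0,T]$ via Lemma~\ref{explosionM} by bounding the second derivatives of $u$. However, the way you propose to obtain the second-derivative bound contains a genuine gap. You claim that, after differentiating the BSDEs for $Y^{(1)},Y^{(2)}$ in $x$, ``the resulting linear BSDEs for $\frac{\dx}{\dx x}Y^{(1)}$ and $\frac{\dx}{\dx x}Y^{(2)}$ have bounded terminal data.'' This is false: for instance
\begin{equation*}
\frac{\dx}{\dx x^{(2)}}Y^{(2)}_T \;=\; -\,\delta''\big(X^{(2)}_T\big)\,\frac{\dx}{\dx x^{(2)}}X^{(2)}_T,
\end{equation*}
and $\frac{\dx}{\dx x^{(2)}}X^{(2)}_T$ is not known to be uniformly bounded at this stage (its bound is only established later, in Lemma~\ref{lem:Zcontrol2}, \emph{after} Lemma~\ref{lem:fbsde3} is available). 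So a direct bound on $\frac{\dx}{\dx x}Y^{(j)}$ via its terminal condition does not go through.

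What the paper actually does is switch to the processes
\begin{equation*}
Y^{(jk)}_s \;:=\; \frac{\dx}{\dx x^{(k)}}u^{(j)}\big(s,X^{(1)}_s,X^{(2)}_s\big), \qquad j,k\in\{1,2\},
\end{equation*}
which, via the decoupling condition and the chain rule, are related to $\frac{\dx}{\dx x^{(i)}}Y^{(j)}$ through $\big(\frac{\dx}{\dx x^{(2)}}X^{(2)}\big)^{-1}$ and $\frac{\dx}{\dx x^{(1)}}X^{(2)}$ (see \eqref{y12}, \eqref{y11}). The terminal values $Y^{(11)}_T=g''(X^{(1)}_T)$, $Y^{(22)}_T=-\delta''(X^{(2)}_T)$, $Y^{(12)}_T=Y^{(21)}_T=0$ are then bounded with \emph{no} reference to the forward derivatives. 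Crucially, an It\^o-product computation causes all occurrences of $\frac{\dx}{\dx x^{(1)}}X^{(2)}$ and $\big(\frac{\dx}{\dx x^{(2)}}X^{(2)}\big)^{-1}$ to cancel, leaving a closed $4$-dimensional \emph{linear} BSDE system for $(Y^{(jk)})$ whose driver has the structure required by Lemma~\ref{lindimcont}. This cancellation is not automatic; it is the heart of the argument and is missing from your proposal. Finally, the phrase ``bounded drift perturbations'' is misleading: the driver terms multiplying the unknowns $(Y^{(jk)})$ involve the processes $Z^{(1)},Z^{(2)}$, which are only $BMO(\mathbb{P})$ (via Theorem~\ref{BSDEBMO}), not uniformly bounded; Lemma~\ref{lindimcont} is precisely designed for linear systems with BMO rather than bounded coefficients, and invoking it (or an equivalent Girsanov/exponential-weight argument at BMO level) is essential.
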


\begin{proof}
  The proof is in parts akin to the proof of Lemma \ref{lem:ext} and we will seek to keep these parts short. 
  
  Let $u^{(i)}$, $i=0,1,2$, be the unique weakly regular Markovian decoupling field on $I^M_{\mathrm{max}}$. We can assume $u^{(i)}$ to be continuous functions on $I^M_{\mathrm{max}}\times\mathbb{R}^2$ (cf. Theorem \ref{globalexistM}). Let $t\in I^M_{\mathrm{max}}$. For an arbitrary initial condition $x\in\mathbb{R}^2$ we consider the corresponding processes $X^{(1)}$, $X^{(2)}$, $Y^{(0)}$, $Y^{(1)}$, $Y^{(2)}$, $Z^{(0)}$, $Z^{(1)}$ and $Z^{(2)}$ on $[t,T]$. Note that $X^{(1)}, X^{(2)}, Y^{(0)}, Z^{(0)}$ solve the FBSDE \eqref{eq:fbsde2}, which implies that they coincide with the processes $X^{(1)}, X^{(2)}, Y, Z$ from \eqref{eq:fbsde2} if we assume
  \begin{equation*}
    \sum_{i=1}^2\sup_{s\in[t,T]}\mathbb{E}_{0,\infty}[|X^{(i)}_s|^2]+\sup_{s\in[t,T]}\mathbb{E}_{0,\infty}[|Y_s|^2]+\|Z\|_\infty+\sum_{i=0}^2\sup_{s\in[t,T]}\mathbb{E}_{0,\infty}[|Y^{(i)}_s|^2]+\sum_{i=0}^2\|Z^{i}\|_\infty<\infty,
  \end{equation*}
  according to Lemma \ref{uniqXYZM}. This condition is fulfilled due to strong regularity and the fact that we work with Markovian decoupling fields.
  
  Now, $Y^{(0)}=Y$ implies $u(t,x)=u^{(0)}(t,x)$ for all $t\in I^M_{\mathrm{max}}, x\in\mathbb{R}^2$, where $I^M_{\mathrm{max}}$ is the maximal interval for the problem given by \eqref{eq:fbsde3}.
  We now claim that $Y^{(1)}$ and $Y^{(2)}$ are bounded processes: Using the backward equation we have
  \begin{equation*}
    Y^{(2)}_s = \mathbb{E}_s\left[-\delta'(X^{(2)}_T)\right]-\mathbb{E}_s\left[\int_{s}^{T}\left(-2Z^{(0)}_rY^{(2)}_r\right)Z^{(2)}_r\dx r\right]
  \end{equation*}
  and, therefore,
  \begin{equation*}
    |Y^{(2)}_s| \leq \|\delta'\|_\infty+\int_{s}^{T}2\|Z^{(0)}\|_\infty\|Z^{(2)}\|_\infty\mathbb{E}_s\left[\left|Y^{(2)}_r\right|\right]\dx r,
  \end{equation*}
  for $s\in[t,T]$, which using Gronwall's lemma implies
  \begin{equation*}
    |Y^{(2)}_s|=\mathbb{E}_s\left[\left|Y^{(2)}_s\right|\right]\leq \|\delta'\|_\infty\exp\left(2T\|Z^{(0)}\|_\infty\|Z^{(2)}\|_\infty\right).
  \end{equation*}
  This in turn automatically implies boundedness of $Y^{(1)}$ according to its dynamics. Furthermore, $Y^{(1)},Z^{(1)}$ and $Y^{(2)},Z^{(2)}$ satisfy the BSDE which is also fulfilled by the processes $U,\check{Z}$ and $V,\tilde{Z}$ from the proof of Lemma \ref{lem:ext} (see \eqref{Vdyn} and \eqref{Udyn}) and so in particular
  \begin{align*}
    Y^{(2)}_s-V_s
    &=0-\int_{s}^T \left(Z^{(2)}_r -\tilde{Z}_r \right)\dx W_r-\int_{s}^{T}\left(-2Z^{(0)}_r\right)\left(Y^{(2)}_rZ^{(2)}_r-V_r\tilde{Z}_r\right)\dx r \\
    &=0-\int_{s}^T \left(Z^{(2)}_r -\tilde{Z}_r \right)\dx W_r-\int_{s}^{T}\left(-2Z^{(0)}_r\right)\left(\left(Y^{(2)}_r V_r\right)Z^{(2)}_r+V_r\left(Z^{(2)}_r-\tilde{Z}_r\right)\right)\dx r.
  \end{align*}
  Using the boundedness of $Z^{(0)}$, $Z^{(2)}$ and $V$ this implies using Lemma \ref{lindimcont} that  $Y^{(2)}-V$ is $0$ almost everywhere. Therefore, after setting $\tilde{W}_s:=W_s-\int_t^s2Z^{(0)}_rV_r\dx r$, $s\in[t,T]$ we get from the above equation $\int_{s}^T \big(Z^{(2)}_r -\tilde{Z}_r \big)\dx \tilde{W}_r=0$ a.s. for $s\in[t,T]$. Since $\tilde{W}$ is a Brownian motion under some probability measure equivalent to $\mathbb{P}$ we also have $Z^{(2)}-\tilde{Z}=0$ a.e.
  
  Similarly, one shows that $Y^{(1)}$ and $U$ as well as  $Z^{(1)}$ and $\check{Z}$ coincide so
  \begin{equation*}
    Y^{(1)}=U,\quad Y^{(2)}=V,\quad  Z^{(1)}=\check{Z}\quad\textrm{and}\quad Z^{(2)}=\tilde{Z}\qquad\textrm{ a.e.}
  \end{equation*}
  Now, remember $U_s=\frac{\dx}{\dx x^{(1)}}u(s,X^{(1)}_s,X^{(2)}_s)$. Together with $u^{(1)}(s,X^{(1)}_s,X^{(2)}_s) = Y^{(1)}_s$ and $Y^{(1)}=U$ this yields $u^{(1)}(t,\cdot)=\frac{\dx}{\dx x^{(1)}}u(t,\cdot)$ and, therefore, $u^{(1)}=\frac{\dx}{\dx x^{(1)}}u$ a.e. on $I^M_{\mathrm{max}}$. Similarly, we get $u^{(2)}=\frac{\dx}{\dx x^{(2)}}u$.
  Further, note that $u^{(1)}=\frac{\dx}{\dx x^{(1)}}u$ is continuous. This makes Lemma \ref{lem:Zcontrol} applicable, so
  \begin{equation}\label{zzuy}
    Z^{(0)}=Z=U=Y^{(1)}\,\textrm{ a.e.}
  \end{equation}
  Thereby $Y^{(1)}$ and $Y^{(2)}$ satisfy the following dynamics:
  \begin{align}
    Y^{(1)}_s &= g'(X^{(1)}_T)-\int_{s}^T Z^{(1)}_r \dx W_r-\int_{s}^{T}\left(-2Y^{(1)}_rY^{(2)}_r\right)Z^{(1)}_r\dx r, \label{y1u} \\
    Y^{(2)}_s &= -\delta'(X^{(2)}_T)-\int_{s}^T Z^{(2)}_r \dx W_r-\int_{s}^{T}\left(-2Y^{(1)}_rY^{(2)}_r\right)Z^{(2)}_r\dx r, \quad s\in[t,T]\label{y2u},
  \end{align}
  which implies using the chain rule of Lemma \ref{chainruleambr}:
  \begin{align*}
    \frac{\dx}{\dx x^{(i)}}Y^{(1)}_s =& g''(X^{(1)}_T)\frac{\dx}{\dx x^{(i)}}X^{(1)}_T-\int_{s}^T \frac{\dx}{\dx x^{(i)}}Z^{(1)}_r \dx W_r\\
    &-\int_{s}^{T}(-2)\left(\left(\frac{\dx}{\dx x^{(i)}}Y^{(1)}_rY^{(2)}_r+Y^{(1)}_r\frac{\dx}{\dx x^{(i)}}Y^{(2)}_r\right)Z^{(1)}_r+Y^{(1)}_rY^{(2)}_r\frac{\dx}{\dx x^{(i)}}Z^{(1)}_r\right)\dx r,
  \end{align*}
  and
  \begin{align*}
    \frac{\dx}{\dx x^{(i)}}Y^{(2)}_s = &-\delta''(X^{(2)}_T)\frac{\dx}{\dx x^{(i)}}X^{(2)}_T-\int_{s}^T \frac{\dx}{\dx x^{(i)}}Z^{(2)}_r \dx W_r\\
    &-\int_{s}^{T}(-2)\left(\left(\frac{\dx}{\dx x^{(i)}}Y^{(1)}_rY^{(2)}_r+Y^{(1)}_r\frac{\dx}{\dx x^{(i)}}Y^{(2)}_r\right)Z^{(2)}_r+Y^{(1)}_rY^{(2)}_r\frac{\dx}{\dx x^{(i)}}Z^{(2)}_r\right)\dx r,
  \end{align*}
  for $i=1,2$. Let us recall some statements about the forward process obtained in the proof of Lemma \ref{lem:ext}:
  \begin{equation*}
    \frac{\dx}{\dx x^{(2)}}X^{(2)}>0,\quad \frac{\dx}{\dx x^{(1)}}X^{(1)}=1, \quad \frac{\dx}{\dx x^{(2)}}X^{(1)}=0, \quad\textrm{a.e.},
  \end{equation*}
  and
  \begin{align}
    \label{Xdyn1}&\left(\frac{\dx}{\dx x^{(2)}}X^{(2)}_s\right)^{-1}=1-\int_t^s2Y^{(1)}_rZ^{(2)}_r \left(\frac{\dx}{\dx x^{(2)}}X^{(2)}_r\right)^{-1} \dx r,\\
    \label{Xdyn2}&\frac{\dx}{\dx x^{(1)}}X^{(2)}_s=\int_{t}^s 2 Y^{(1)}_r\left(Z^{(1)}_r+Z^{(2)}_r\frac{\dx}{\dx x^{(1)}}X^{(2)}_r\right)\dx r.
  \end{align}
  Using the chain rule of Lemma \ref{chainruleambr} and the decoupling condition, we have
  \begin{align*}
    \frac{\dx}{\dx x^{(1)}}Y^{(i)}_s&=\frac{\dx}{\dx x^{(1)}}u^{(i)}(s,X^{(1)}_s,X^{(2)}_s)+\frac{\dx}{\dx x^{(2)}}u^{(i)}(s,X^{(1)}_s,X^{(2)}_s)\frac{\dx}{\dx x^{(1)}}X^{(2)}_s, \\
    \frac{\dx}{\dx x^{(2)}}Y^{(i)}_s&=\frac{\dx}{\dx x^{(2)}}u^{(i)}(s,X^{(1)}_s,X^{(2)}_s)\frac{\dx}{\dx x^{(2)}}X^{(2)}_s,\quad i=1,2.
  \end{align*}
  Let us set
  \begin{align}
    Y^{(12)}_s &:= \frac{\dx}{\dx x^{(2)}}u^{(1)}(s,X^{(1)}_s,X^{(2)}_s)=\left(\frac{\dx}{\dx x^{(2)}}Y^{(1)}_s\right)\left(\frac{\dx}{\dx x^{(2)}}X^{(2)}_s\right)^{-1},\label{y12} \\
    Y^{(22)}_s &:= \frac{\dx}{\dx x^{(2)}}u^{(2)}(s,X^{(1)}_s,X^{(2)}_s)=\left(\frac{\dx}{\dx x^{(2)}}Y^{(2)}_s\right)\left(\frac{\dx}{\dx x^{(2)}}X^{(2)}_s\right)^{-1},\notag \allowdisplaybreaks\\
    Y^{(11)}_s &:= \frac{\dx}{\dx x^{(1)}}u^{(1)}(s,X^{(1)}_s,X^{(2)}_s)=\frac{\dx}{\dx x^{(1)}}Y^{(1)}_s-Y^{(12)}_s\frac{\dx}{\dx x^{(1)}}X^{(2)}_s, \label{y11}\\
    Y^{(21)}_s &:= \frac{\dx}{\dx x^{(1)}}u^{(2)}(s,X^{(1)}_s,X^{(2)}_s)=\frac{\dx}{\dx x^{(1)}}Y^{(2)}_s-Y^{(22)}_s\frac{\dx}{\dx x^{(1)}}X^{(2)}_s.\notag
  \end{align}
  We can apply the It\^o formula to deduce dynamics of $Y^{(12)}$ and $Y^{(11)}$ from dynamics of $\frac{\dx}{\dx x^{(2)}}Y^{(1)}$, $\big(\frac{\dx}{\dx x^{(2)}}X^{(2)}\big)^{-1}$, $\frac{\dx}{\dx x^{(1)}}Y^{(1)}$ and $\frac{\dx}{\dx x^{(1)}}X^{(2)}$: 
  
  Let us define $Z^{(12)}_s:=\big(\frac{\dx}{\dx x^{(2)}}Z^{(1)}_s\big)\big(\frac{\dx}{\dx x^{(2)}}X^{(2)}_s\big)^{-1}$, so we can write using \eqref{y12}
  \begin{align*}
    Y^{(12)}_s =&0-\int_{s}^T Z^{(12)}_r \dx W_r
     -\int_{s}^{T}\bigg\{(-2)\left(\bigg(\frac{\dx}{\dx x^{(2)}}Y^{(1)}_rY^{(2)}_r+Y^{(1)}_r\frac{\dx}{\dx x^{(2)}}Y^{(2)}_r\right)Z^{(1)}_r
    \\&+Y^{(1)}_rY^{(2)}_r\frac{\dx}{\dx  x^{(2)}}Z^{(1)}_r\bigg)\left(\frac{\dx}{\dx x^{(2)}}X^{(2)}_r\right)^{-1}
    -2\frac{\dx}{\dx x^{(2)}}Y^{(1)}_s  Y^{(1)}_rZ^{(2)}_r \left(\frac{\dx}{\dx x^{(2)}}X^{(2)}_r\right)^{-1}\bigg\} \dx r.
  \end{align*}
  Using the definitions of $Y^{(12)}$, $Y^{(22)}$ and $Z^{(12)}$ we can simplify this to
  \begin{align*}
    Y^{(12)}_s = &0-\int_{s}^T Z^{(12)}_r \dx W_r\\
    & -\int_{s}^{T}(-2)\left(\left(Y^{(12)}_rY^{(2)}_r+Y^{(1)}_rY^{(22)}_r\right)Z^{(1)}_r+Y^{(1)}_rY^{(2)}_rZ^{(12)}_r+Y^{(12)}_rY^{(1)}_rZ^{(2)}_r\right) \dx r.
  \end{align*}
  Let us now define $Z^{(11)}_s:=\frac{\dx}{\dx x^{(1)}}Z^{(1)}_s-Z^{(12)}_s\frac{\dx}{\dx x^{(1)}}X^{(2)}_s$, so we can write using \eqref{y11}
  \begin{align*}
    Y^{(11)}_s
    =& g''(X^{(1)}_T)-\int_{s}^T Z^{(11)}_r \dx W_r \\
     & -\int_{s}^{T}\bigg\{(-2)\left(\left(\frac{\dx}{\dx x^{(1)}}Y^{(1)}_rY^{(2)}_r+Y^{(1)}_r\frac{\dx}{\dx x^{(1)}}Y^{(2)}_r\right)Z^{(1)}_r+Y^{(1)}_rY^{(2)}_r\frac{\dx}{\dx x^{(1)}}Z^{(1)}_r\right) \\
     &-(-2)\left(\left(Y^{(12)}_rY^{(2)}_r+Y^{(1)}_rY^{(22)}_r\right)Z^{(1)}_r+Y^{(1)}_rY^{(2)}_rZ^{(12)}_r+Y^{(12)}_rY^{(1)}_rZ^{(2)}_r\right)\frac{\dx}{\dx x^{(1)}}X^{(2)}_r \\
     &-Y^{(12)}_r \cdot 2\cdot Y^{(1)}_r\left(Z^{(1)}_r+Z^{(2)}_r\frac{\dx}{\dx x^{(1)}}X^{(2)}_r\right)\bigg\} \dx r.
  \end{align*}
  This can be simplified using \eqref{y11} to
  \begin{align*}
    Y^{(11)}_s
    =& g''(X^{(1)}_T)-\int_{s}^T Z^{(11)}_r \dx W_r\\
     & -\int_{s}^{T}\bigg\{(-2)\bigg(\bigg(Y^{(11)}_rY^{(2)}_r+Y^{(1)}_r\frac{\dx}{\dx x^{(1)}}Y^{(2)}_r\bigg)Z^{(1)}_r+Y^{(1)}_rY^{(2)}_r \frac{\dx}{\dx x^{(1)}}Z^{(1)}_r\bigg) \\
     &-(-2)\bigg(Y^{(1)}_r Y^{(22)}_rZ^{(1)}_r+Y^{(1)}_rY^{(2)}_r Z^{(12)}_r+Y^{(12)}_rY^{(1)}_rZ^{(2)}_r\bigg)\frac{\dx}{\dx x^{(1)}}X^{(2)}_r \\
     &-Y^{(12)}_r \cdot 2\cdot Y^{(1)}_r\bigg(Z^{(1)}_r+Z^{(2)}_r\frac{\dx}{\dx x^{(1)}}X^{(2)}_r\bigg)\bigg\} \dx r.
  \end{align*}
  Similarly, we merged further terms using the structure of $Y^{(21)}$ and $Z^{(11)}$ to get
  \begin{align*}
    Y^{(11)}_s =&g''(X^{(1)}_T)-\int_{s}^T Z^{(11)}_r \dx W_r\\
    &-\int_{s}^{T}\bigg\{(-2)\bigg(\bigg(Y^{(11)}_rY^{(2)}_r+Y^{(1)}_rY^{(21)}_r\bigg)Z^{(1)}_r+Y^{(1)}_rY^{(2)}_rZ^{(11)}_r\bigg) \\
    &-(-2)\bigg(Y^{(12)}_rY^{(1)}_rZ^{(2)}_r\bigg)\frac{\dx}{\dx x^{(1)}}X^{(2)}_r-Y^{(12)}_r \cdot 2\cdot Y^{(1)}_r\bigg(Z^{(1)}_r+Z^{(2)}_r\frac{\dx}{\dx x^{(1)}}X^{(2)}_r\bigg)\bigg\} \dx r\\
    =&g''(X^{(1)}_T)-\int_{s}^T Z^{(11)}_r \dx W_r\\
    &-\int_{s}^{T}(-2)\left(\left(Y^{(11)}_rY^{(2)}_r+Y^{(1)}_rY^{(21)}_r\right)Z^{(1)}_r+Y^{(1)}_rY^{(2)}_rZ^{(11)}_r+Y^{(12)}_r Y^{(1)}_rZ^{(1)}_r\right) \dx r.
  \end{align*}
  Analogously to $Y^{(12)}$ we can deduce dynamics of  $Y^{(22)}$:
  \begin{align*}
    Y^{(22)}_s =&-\delta''(X^{(2)}_T)-\int_{s}^T Z^{(22)}_r \dx W_r\\
    &-\int_{s}^{T}(-2)\left(\left(Y^{(12)}_rY^{(2)}_r+Y^{(1)}_rY^{(22)}_r\right)Z^{(2)}_r+Y^{(1)}_rY^{(2)}_rZ^{(22)}_r+Y^{(22)}_rY^{(1)}_rZ^{(2)}_r\right) \dx r.
  \end{align*}
  From here we can, analogously to $ Y^{(11)}$, deduce the dynamics of $Y^{(21)}$:
  \begin{align*}
    Y^{(21)}_s =&0-\int_{s}^T Z^{(21)}_r \dx W_r\\
    &-\int_{s}^{T}(-2)\left(\left(Y^{(11)}_rY^{(2)}_r+Y^{(1)}_rY^{(21)}_r\right)Z^{(2)}_r+Y^{(1)}_rY^{(2)}_rZ^{(21)}_r+Y^{(22)}_r Y^{(1)}_rZ^{(1)}_r\right) \dx r.
  \end{align*}
  And so we have finally obtained the complete dynamics of the $4$-dimensional process $(Y^{(ij)})$, $i,j=1,2$, which are clearly linear in it. Furthermore, remember:
  \begin{itemize}
    \item $Y^{(1)}$, $Y^{(2)}$ are uniformly bounded independently of $(t,x)$ due to the decoupling condition, $u^{(i)}=\frac{\dx}{\dx x^{(i)}}u$, $i=1,2$ and Lemma \ref{lem:ext},
    \item $Z^{(1)}$, $Z^{(2)}$ are $BMO(\mathbb{P})$ processes with uniformly bounded $BMO(\mathbb{P})$-norms independently of $(t,x)$ due to \eqref{y1u}, \eqref{y2u} and Theorem \ref{BSDEBMO},
    \item $(Y^{(ij)})$, $i,j=1,2$ are bounded according to their definition (with a bound which may depend on $t,x$ at this point),
    \item $(Z^{(ij)})$, $i,j=1,2$ are in $BMO(\mathbb{P})$ according to Theorem \ref{BSDEBMO},
    \item $(Y_T^{(ij)})_{i,j=1,2}$ is uniformly bounded by $\|g''\|_\infty+\|\delta''\|_\infty<\infty$.
  \end{itemize}
  Therefore, Lemma \ref{lindimcont} is applicable and $(Y^{(ij}))_{i,j=1,2}$ is uniformly bounded, independently of $(t,x)$. In particular, $Y^{(ij)}_t=\frac{\dx}{\dx x^{(j)}}u^{(i)}(t,x)$, $i,j=1,2$, can be controlled independently of $t\in I^M_{\mathrm{max}}$, $x\in\mathbb{R}^2$, while $\frac{\dx}{\dx x^{(j)}}u^{(0)}(t,x)$, $j=1,2$, has the same property as we already know. This shows $I^M_{\mathrm{max}}=[0,T]$ using Lemma \ref{explosionM}.
\end{proof}

\begin{lemma}\label{lem:Zcontrol2}
  Let $g$, $\delta$, $g'$, $\delta'$ be Lipschitz continuous functions. Let $\left(u^{(i)}\right)_{i=0,1,2}$ be the unique weakly regular Markovian decoupling field to problem \eqref{eq:fbsde3} constructed in Theorem \ref{lem:fbsde3}.
  
  Suppose that $\frac{\dx}{\dx x^{(1)}}u^{(i)}$, $i=0,1,2$, has a version which is continuous in the first two components $(s,x^{(1)})$ on $[t,T)\times\mathbb{R}^2$ for some $t\in[0,T)$. Then for any initial condition $(X_t^{(1)},X_t^{(2)})=(x^{(1)},x^{(2)})=x\in\mathbb{R}^2$ the associated processes $Z^{(i)}$, $i=0,1,2$, on $[t,T]$ satisfy
  \begin{equation*}
  Z^{(i)}_s(\omega)=\frac{\dx}{\dx x^{(1)}}u^{(i)}\left(s, X^{(1)}_s(\omega), X^{(2)}_s(\omega)\right), \quad i=0,1,2,
  \end{equation*}
  for almost all $(s,\omega)\in[t,T]\times\Omega$. 
  
  Furthermore, in this case the processes
  \begin{equation*}
    \frac{\dx}{\dx x^{(1)}}X^{(2)},\quad \frac{\dx}{\dx x^{(2)}}X^{(2)}\quad\textrm{and}\quad\bigg(\frac{\dx}{\dx x^{(2)}}X^{(2)}\bigg)^{-1} \textrm{ on } [t,T],
  \end{equation*}
  can be bounded uniformly, i.e. independently of $(t,x)$.
\end{lemma}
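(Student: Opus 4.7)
The plan is to first establish the identification of $Z^{(i)}_s$ with $\frac{\dx}{\dx x^{(1)}}u^{(i)}(s, X^{(1)}_s, X^{(2)}_s)$ by adapting the argument of Lemma \ref{lem:Zcontrol} to each component $i\in\{0,1,2\}$, and then to combine this identification with the explicit formulas for the forward Jacobians from the proof of Lemma \ref{lem:fbsde3} to extract the uniform bounds. Since $u^{(0)}=u$ and the backward equation for $Y^{(0)}$ has vanishing driver, the case $i=0$ is a direct application of Lemma \ref{lem:Zcontrol} to $u$, because the hypothesis on $\frac{\dx}{\dx x^{(1)}}u^{(0)}$ coincides with that on $\frac{\dx}{\dx x^{(1)}}u$, and $u$ is Lipschitz in $x^{(2)}$ by Lemma \ref{lem:ext}. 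Hence the genuine work lies in the cases $i=1,2$.

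For $i\in\{1,2\}$ I would write the backward dynamics as $\dx Y^{(i)}_r=-f^{(i)}_r\,\dx r+Z^{(i)}_r\,\dx W_r$ with $f^{(i)}_r=-2Z^{(0)}_r Y^{(2)}_r Z^{(i)}_r$, apply It\^o's formula to $Y^{(i)}_r(W_r-W_s)$, and take conditional expectations to arrive at
\[
\frac{1}{h}\mathbb{E}\big[Y^{(i)}_{s+h}(W_{s+h}-W_s)\,\big|\,\mathcal{F}_s\big] = \frac{1}{h}\mathbb{E}\Big[\int_s^{s+h} Z^{(i)}_r\,\dx r\,\Big|\,\mathcal{F}_s\Big] - \frac{1}{h}\mathbb{E}\Big[\int_s^{s+h}(W_r-W_s)f^{(i)}_r\,\dx r\,\Big|\,\mathcal{F}_s\Big].
\]
The Lebesgue differentiation theorem identifies the first right-hand term with $Z^{(i)}_s$ for almost every $(s,\omega)$. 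For the second, Cauchy--Schwarz, combined with the boundedness of $Y^{(2)}$ and $Z^{(0)}$ (from the proofs of Lemma \ref{lem:fbsde3} and Lemma \ref{lem:ext}) and the $\mathrm{BMO}(\mathbb{P})$-property of $Z^{(i)}$, yields a bound of the form $C\big(\mathbb{E}[\int_s^{s+h}(Z^{(i)}_r)^2\,\dx r\,|\,\mathcal{F}_s]\big)^{1/2}$, which vanishes as $h\downarrow 0$ by conditional dominated convergence. I would then compute the same limit using the decoupling condition $Y^{(i)}_{s+h}=u^{(i)}(s+h,X^{(1)}_{s+h},X^{(2)}_{s+h})$ and the splitting and Gaussian integration by parts from Lemma \ref{lem:Zcontrol}; the Lipschitz continuity of $u^{(i)}$ in $x^{(2)}$ required for controlling the $X^{(2)}_{s+h}\leadsto X^{(2)}_s$ remainder follows from the uniform bounds on $\frac{\dx}{\dx x^{(2)}}u^{(i)}$ in Lemma \ref{lem:fbsde3}, while the assumed continuity of $\frac{\dx}{\dx x^{(1)}}u^{(i)}$ in $(s,x^{(1)})$, together with its uniform $L^\infty$-bound, lets me pass to the limit and identify it with $\frac{\dx}{\dx x^{(1)}}u^{(i)}(s,X^{(1)}_s,X^{(2)}_s)$.

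For part two, the identification in part one, combined with the uniform $L^\infty$-bound on $\frac{\dx}{\dx x^{(1)}}u^{(i)}$ established in the proof of Lemma \ref{lem:fbsde3}, yields an $(t,x)$-independent bound on $\|Z^{(i)}\|_\infty$. Likewise, $Y^{(1)}=u^{(1)}(\cdot,X^{(1)},X^{(2)})$ and $Y^{(2)}=u^{(2)}(\cdot,X^{(1)},X^{(2)})$ are uniformly bounded since $u^{(1)}$ and $u^{(2)}$ are bounded. Equation \eqref{Xdyn1} is then a scalar linear ODE for $(\frac{\dx}{\dx x^{(2)}}X^{(2)})^{-1}$ with uniformly bounded coefficient $2Y^{(1)}Z^{(2)}$, with explicit solution $(\frac{\dx}{\dx x^{(2)}}X^{(2)}_s)^{-1}=\exp\!\big(-\int_t^s 2Y^{(1)}_r Z^{(2)}_r\,\dx r\big)$; both this expression and its reciprocal are then bounded by $\exp(2T\|Y^{(1)}\|_\infty\|Z^{(2)}\|_\infty)$. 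Finally, \eqref{Xdyn2} is a linear inhomogeneous ODE for $\frac{\dx}{\dx x^{(1)}}X^{(2)}$ with vanishing initial data and bounded coefficients, resolved by variation of constants to give a uniform bound depending only on $T$, $\|Y^{(1)}\|_\infty$, $\|Z^{(1)}\|_\infty$ and $\|Z^{(2)}\|_\infty$.

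The main obstacle is the identification in part one for $i\in\{1,2\}$: unlike in Lemma \ref{lem:Zcontrol}, the backward equations are not those of pure martingales, so the driver contribution must be controlled by hand. The decisive observation is that $f^{(i)}$ is the product of a bounded process and a $\mathrm{BMO}$ process, which is precisely what makes the conditional $L^2$-mass over shrinking intervals tend to zero and thereby allows the martingale calculation of Lemma \ref{lem:Zcontrol} to go through essentially unchanged.
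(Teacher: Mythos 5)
Your proposal follows essentially the same route as the paper's proof: adapt the difference-quotient argument of Lemma \ref{lem:Zcontrol} to each of $Y^{(0)},Y^{(1)},Y^{(2)}$, handle the extra driver contribution for $i=1,2$ via boundedness of $Z^{(0)},Y^{(2)}$ and the BMO-property of $Z^{(i)}$, and then derive the Jacobian bounds from the explicit exponential representation of $(\tfrac{\dx}{\dx x^{(2)}}X^{(2)})^{-1}$ plus Gronwall (resp. variation of constants) for $\tfrac{\dx}{\dx x^{(1)}}X^{(2)}$, using the identification just proved to import the uniform bounds on $\frac{\dx}{\dx x^{(1)}}u^{(i)}$ from Lemma \ref{lem:fbsde3}. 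The only blemish is a harmless sign slip in the SDE for $Y^{(i)}$ (the drift is $(-2Z^{(0)}_rY^{(2)}_r)Z^{(i)}_r$, not its negative), which has no bearing on the estimate; otherwise the argument matches the paper, with your Cauchy--Schwarz/conditional-DCT treatment of the driver term merely making explicit a step the paper states without detail.
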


\begin{proof}
  The first part of the proof works analogously to the proof of Lemma \ref{lem:Zcontrol}. So we keep our arguments short. For $i=0,1,2$ we consider $\frac{1}{h}\mathbb{E}[Y^{(i)}_{s+h}(W_{s+h}-W_s)|\mathcal{F}_s]$ for small $h>0$. As in the proof of Lemma \ref{lem:Zcontrol}, we use It\^o's formula applied to \eqref{eq:fbsde3} to obtain
  \begin{align*}
    Y^{(i)}_{s+h}(W_{s+h}-W_s)
    =&\int_s^{s+h}Y^{(i)}_{r}\dx W_{r}+\int_s^{s+h}(W_{r}-W_s)Z^{(i)}_{r}\dx W_{r}\\
     &+\int_s^{s+h}(W_{r}-W_s)\left(-2Z^{(0)}_rY^{(2)}_r\right)Z^{(i)}_{r}\dx r+\int_s^{s+h}Z^{(i)}_{r}\dx r,
  \end{align*}
  and also
  \begin{equation*}
    Y^{(0)}_{s+h}(W_{s+h}-W_s)=\int_s^{s+h}Y^{(0)}_{r}\dx W_{r}+\int_s^{s+h}(W_{r}-W_s)Z^{(0)}_{r}\dx W_{r}+\int_s^{s+h}Z^{(0)}_{r}\dx r,
  \end{equation*}
  which leads to
  \begin{equation*}
    \frac{1}{h}\mathbb{E}[Y^{(0)}_{s+h}(W_{s+h}-W_s)|\mathcal{F}_s]=\frac{1}{h}\mathbb{E}\left[\int_s^{s+h}Z^{(0)}_{r}\dx r\bigg|\mathcal{F}_s\right]\to Z^{(0)}_s\quad\textrm{for}\quad h\to 0,
  \end{equation*}
  and
  \begin{equation*}
    \frac{1}{h}\mathbb{E}[Y^{(i)}_{s+h}(W_{s+h}-W_s)|\mathcal{F}_s]=\frac{1}{h}\mathbb{E}\left[\int_s^{s+h}Z^{(i)}_{r}\left(1+(W_{r}-W_s)\left(-2Z^{(0)}_rY^{(2)}_r\right)\right)\dx r\bigg|\mathcal{F}_s\right]\to Z^{(i)}_s
  \end{equation*}
  as $h\to 0$ for $i=1,2$. The arguments are valid for almost all $s\in[t,T]$.
  
  On the other hand we can use the decoupling condition to rewrite
  \begin{align*}
    Y^{(i)}_{s+h}(W_{s+h}-W_s) = &u^{(i)}\left(s+h,X^{(1)}_{s+h},X^{(2)}_{s}\right)(W_{s+h}-W_s)\\
      &+\left(u^{(i)}\left(s+h,X^{(1)}_{s+h},X^{(2)}_{s+h}\right)-u^{(i)}\left(s+h,X^{(1)}_{s+h},X^{(2)}_{s}\right)\right)(W_{s+h}-W_s).
  \end{align*}
  Let us deal separately with the two summands. For the first one recall that $X^{(1)}_{s}$ and $X^{(2)}_{s}$ are $\mathcal{F}_s$-measurable, $X^{(1)}_{s+h}=X^{(1)}_{s}+(W_{s+h}-W_s)$, $W_{s+h}-W_s$ is independent of $\mathcal{F}_s$, and $u$ is deterministic, i.e. is assumed to be a function of $\left(s,x^{(1)},x^{(2)}\right)\in[0,T]\times\mathbb{R}^2$. A combination of these properties leads to
  \begin{equation*}
    \lim_{h\downarrow 0}\frac{1}{h}\mathbb{E}\left[u^{(i)}\left(s+h,X^{(1)}_{s+h},X^{(2)}_{s}\right)(W_{s+h}-W_s)\Big|\mathcal{F}_s\right]=\frac{\dx}{\dx x^{(1)}}u^{(i)}\left(s,X^{(1)}_{s},X^{(2)}_{s}\right),
  \end{equation*}
  if $\frac{\dx}{\dx x^{(1)}}u^{(i)}$ is continuous in the first two components on $[t,T)\times\mathbb{R}^2$, where we use that $\frac{\dx}{\dx x^{(1)}}u^{(i)}$ is bounded.
  
  For the second summand recall that $u^{(i)}$ is also Lipschitz continuous in the last component with some Lipschitz constant $L$ and $X^{(2)}_{s+h}=X^{(2)}_{s}+\int_s^{s+h}\big( Z_r^{(0)}\big)^2\dx r$. These properties allow us to estimate
  \begin{align*}
    \frac{1}{h}&\left|\mathbb{E}\left[\left(u^{(i)}\left(s+h,X^{(1)}_{s+h},X^{(2)}_{s+h}\right)-u^{(i)}\left(s+h,X^{(1)}_{s+h},X^{(2)}_{s}\right)\right)(W_{s+h}-W_s)\Big|\mathcal{F}_s\right]\right| \\
    &\leq\frac{1}{h}\mathbb{E}\left[L\cdot\left(\int_s^{s+h}\big( Z_r^{(0)}\big)^2\dx r\right)\cdot |W_{s+h}-W_s|\Big|\mathcal{F}_s\right]\leq\frac{1}{h}L\cdot h \|Z^{(0)}\|^2_\infty\mathbb{E}[|W_{s+h}-W_s|],
  \end{align*}
  which tends to $0$ as $h\to 0$.
  
  Therefore, we can conclude
  \begin{equation*}
    Z^{(i)}_s=\lim_{h\downarrow 0}\frac{1}{h}\mathbb{E}[Y^{(i)}_{s+h}(W_{s+h}-W_s)|\mathcal{F}_s]=\frac{\dx}{\dx x^{(1)}}u^{(i)}\left(s,X^{(1)}_{s},X^{(2)}_{s}\right)
  \end{equation*}
  if $\frac{\dx}{\dx x^{(1)}}u^{(i)}$ is continuous in the first two components on $[t,T)\times\mathbb{R}^2$, for $i=0,1,2$.
  
  Now recall \eqref{Xdyn1} and \eqref{Xdyn2} from the proof of Theorem \ref{lem:fbsde3}:
  \begin{align*}
    &\bigg(\frac{\dx}{\dx x^{(2)}}X^{(2)}_s\bigg)^{-1}=1-\int_t^s2Y^{(1)}_rZ^{(2)}_r \bigg(\frac{\dx}{\dx x^{(2)}}X^{(2)}_r\bigg)^{-1} \dx r,\\
    &\frac{\dx}{\dx x^{(1)}}X^{(2)}_s=\int_{t}^s 2 Y^{(1)}_r\bigg(Z^{(1)}_r+Z^{(2)}_r\frac{\dx}{\dx x^{(1)}}X^{(2)}_r\bigg)\dx r,
  \end{align*}
  a.s. for $s\in[t,T]$. The first equation implies
  \begin{equation*}
    \bigg(\frac{\dx}{\dx x^{(2)}}X^{(2)}_s\bigg)^{-1}=\exp\bigg(-\int_t^s2Y^{(1)}_rZ^{(2)}_r \dx r\bigg).
  \end{equation*}
  Using $Z^{(2)}=\frac{\dx}{\dx x^{(1)}}u^{(2)}(\cdot,X^{(1)}_{\cdot},X^{(2)}_{\cdot})$, $Y^{(1)}=Z^{(0)}=\frac{\dx}{\dx x^{(1)}}u^{(0)}(\cdot,X^{(1)}_{\cdot},X^{(2)}_{\cdot})$ (see \eqref{zzuy} in the proof of Theorem \ref{lem:fbsde3}) and uniform boundedness of $\frac{\dx}{\dx x^{(1)}}u^{(i)}$ for $i=0,1,2$ we see that this implies uniform boundedness of $\big(\frac{\dx}{\dx x^{(2)}}X^{(2)}_s\big)^{-1}$ and its inverse $\frac{\dx}{\dx x^{(2)}}X^{(2)}_s$.
  
  Furthermore, we have 
  \begin{equation*}
    \left|\frac{\dx}{\dx x^{(1)}}X^{(2)}_s\right|\leq 2T\|Y^{(1)}Z^{(1)}\|_\infty+\int_{t}^s 2 \|Y^{(1)}Z^{(2)}\|_\infty\left|\frac{\dx}{\dx x^{(1)}}X^{(2)}_r\right|\dx r.
  \end{equation*}
  By Gronwall's lemma together with uniform boundedness of $Z^{(1)}=\frac{\dx}{\dx x^{(1)}}u^{(1)}(\cdot,X^{(1)}_{\cdot},X^{(2)}_{\cdot})$,  $Z^{(2)}$ and $Y^{(1)}$ implies the uniform boundedness of $\frac{\dx}{\dx x^{(1)}}X^{(2)}$.
\end{proof}

For the subsequent results we employ the following notation:
\begin{itemize}
  \item For a real number $H>0$ let $\chi_H:\mathbb{R}\rightarrow\mathbb{R}$ be defined via $\chi_H(x):=(-H)\vee \left(x\wedge H\right)$ for $x\in\mathbb{R}$. In particular, $\chi_H$ is bounded, Lipschitz continuous and coincides with the identity function on the interval $[-H,H]$.
  \item For real numbers $y^{(ij)}$ and $y^{(i)}$ we denote by $y^{(ij)\wedge H}$ and $y^{(i)\wedge H}$ the values $\chi_H(y^{(ij)})$ and $\chi_H(y^{(i)})$ for $i,j=1,2$.
\end{itemize}

To prove sufficiently regularity properties of the decoupling field $u$, we need to consider for $H>0$ the following even higher dimensional system of equations:
\begin{align*}
  X^{(1)}_s  = x^{(1)}+ \int_{t}^s 1 \dx W_r,\quad
  X^{(2)}_s  = x^{(2)} +\int_{t}^s \left(Z^{(0)}_r\right)^2 \dx r
\end{align*}
with backward equations
\begin{align*}
  Y^{(0)}_s  &= g(X^{(1)}_T)- \delta (X^{(2)}_T)-\int_{s}^{T}Z^{(0)}_r\dx W_r,  & u^{(0)}(s,X^{(1)}_s,X^{(2)}_s) &= Y^{(0)}_s,& \notag \\
  Y^{(1)}_s  &= g'(X^{(1)}_T)-\int_{s}^T Z^{(1)}_r \dx W_r-\int_{s}^{T}\left(-2Z^{(0)}_rY^{(2)}_r\right)Z^{(1)}_r\dx r, &  u^{(1)}(s,X^{(1)}_s,X^{(2)}_s) &= Y^{(1)}_s,& \notag \\
  Y^{(2)}_s  &= -\delta'(X^{(2)}_T)-\int_{s}^T Z^{(2)}_r \dx W_r-\int_{s}^{T}\left(-2Z^{(0)}_rY^{(2)}_r\right)Z^{(2)}_r\dx r,& u^{(2)}(s,X^{(1)}_s,X^{(2)}_s) &= Y^{(2)}_s,
\end{align*}
and
\begin{align*}
  Y^{(11)}_s =&  g''(X^{(1)}_T)- \int_{s}^T Z^{(11)}_r \dx W_r - \int_{s}^{T}(-2)\bigg\{\left(Y^{(11)\wedge H}_rY^{(2)\wedge H}_r+Y^{(1)\wedge H}_rY^{(21)\wedge H}_r\right)Z^{(1)}_r& \notag \\
  &+Y^{(1)\wedge H}_rY^{(2)\wedge H}_rZ^{(11)}_r+Y^{(12)\wedge H}_rY^{(1)\wedge H}_rZ^{(1)}_r\bigg\}\dx r, \notag \allowdisplaybreaks\\
  Y^{(12)}_s =&  0- \int_{s}^T Z^{(12)}_r \dx W_r-\int_{s}^{T}(-2)\bigg\{\left(Y^{(12)\wedge H}_rY^{(2)\wedge H}_r+Y^{(1)\wedge H}_rY^{(22)\wedge H}_r\right)Z^{(1)}_r \notag \\
  & +Y^{(1)\wedge H}_rY^{(2)\wedge H}_rZ^{(12)}_r+Y^{(12)\wedge H}_rY^{(1)\wedge H}_rZ^{(2)}_r\bigg\}\dx r, \notag \allowdisplaybreaks\\
  Y^{(21)}_s =& 0- \int_{s}^T Z^{(21)}_r \dx W_r-\int_{s}^{T}(-2)\bigg\{\left(Y^{(11)\wedge H}_rY^{(2)\wedge H}_r+Y^{(1)\wedge H}_rY^{(21)\wedge H}_r\right)Z^{(2)}_r \notag \\
  &+Y^{(1)\wedge H}_rY^{(2)\wedge H}_rZ^{(21)}_r+Y^{(22)\wedge H}_rY^{(1)\wedge H}_rZ^{(1)}_r\bigg\}\dx r, \notag \allowdisplaybreaks\\
  Y^{(22)}_s =&-\delta''(X^{(2)}_T)- \int_{s}^T Z^{(22)}_r \dx W_r-\int_{s}^{T}(-2)\bigg\{\left(Y^{(12)\wedge H}_rY^{(2)\wedge H}_r+Y^{(1)\wedge H}_rY^{(22)\wedge H}_r\right)Z^{(2)}_r \notag \\
  &+Y^{(1)\wedge H}_rY^{(2)\wedge H}_rZ^{(22)}_r+Y^{(22)\wedge H}_rY^{(1)\wedge H}_rZ^{(2)}_r\bigg\}\dx r, \notag
\end{align*}
and with the decoupling conditions
\begin{align}\label{eq:fbsde4}
  u^{(11)}(s,X^{(1)}_s,X^{(2)}_s) &= Y^{(11)}_s,&   u^{(12)}(s,X^{(1)}_s,X^{(2)}_s) &= Y^{(12)}_s, & \notag \\
  u^{(21)}(s,X^{(1)}_s,X^{(2)}_s) &= Y^{(21)}_s,&   u^{(22)}(s,X^{(1)}_s,X^{(2)}_s) &= Y^{(22)}_s. &
\end{align}
With \eqref{eq:fbsde4} we will always refer to all the above equations.

\begin{lemma}\label{lem:fbsde4}
  Let $g$, $\delta$, $g'$, $\delta'$, $g''$, $\delta''$ be Lipschitz continuous functions. For sufficiently large $H>0$ the above problem \eqref{eq:fbsde4} satisfies $I^M_{\mathrm{max}}=[0,T]$ and in addition
  \begin{align*}
    &u^{(0)}=u,\quad u^{(1)}=\frac{\dx}{\dx x^{(1)}}u, \quad u^{(2)}=\frac{\dx}{\dx x^{(2)}}u, \quad u^{(11)}=\frac{\dx^2}{\left(\dx x^{(1)}\right)^2}u, \\
    &u^{(12)}=\frac{\dx}{\dx x^{(2)}}\frac{\dx}{\dx x^{(1)}}u, \quad u^{(21)}=\frac{\dx}{\dx x^{(1)}}\frac{\dx}{\dx x^{(2)}}u,\quad u^{(22)}=\frac{\dx^2}{\left(\dx x^{(2)}\right)^2}u,\quad a.e.,
  \end{align*}
  where $u$ is the unique weakly regular Markovian decoupling field to the problem \eqref{eq:fbsde2}. In particular, $u$ is three times weakly differentiable w.r.t. $x$ with uniformly bounded derivatives.
\end{lemma}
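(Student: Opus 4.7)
The plan is to mirror the strategy of Lemma \ref{lem:fbsde3} one derivative higher. The truncation via $\chi_H$ in the drivers of the $Y^{(ij)}$-equations makes those drivers Lipschitz in $(y,z)$, so the augmented system \eqref{eq:fbsde4} satisfies \textup{(MLLC)} and Theorem \ref{globalexistM} produces a unique weakly regular Markovian decoupling field $(u^{(0)},u^{(1)},u^{(2)},u^{(11)},u^{(12)},u^{(21)},u^{(22)})$ on some maximal interval $I^M_{\mathrm{max}}$. Since the equations for $(X^{(1)},X^{(2)},Y^{(0)},Y^{(1)},Y^{(2)})$ do not involve the $Y^{(ij)}$-components, Lemma \ref{lem:fbsde3} applies and yields the identifications $u^{(0)}=u$, $u^{(1)}=\frac{\dx}{\dx x^{(1)}}u$, $u^{(2)}=\frac{\dx}{\dx x^{(2)}}u$ together with uniform boundedness of $Y^{(1)}$, $Y^{(2)}$ and uniformly bounded $BMO(\mathbb{P})$-norms for $Z^{(0)}$, $Z^{(1)}$, $Z^{(2)}$.

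Next I would identify the $Y^{(ij)}$ with the second-order weak derivatives of $u$. Exactly as in the proof of Lemma \ref{lem:fbsde3}, differentiating the backward equations for $Y^{(1)}$ and $Y^{(2)}$ with respect to $x^{(1)}$ and $x^{(2)}$, combining with the forward equations \eqref{Xdyn1} and \eqref{Xdyn2} for $\frac{\dx}{\dx x^{(i)}}X^{(2)}$, and using It\^o's formula on the processes $Y^{(ij)}_s := \frac{\dx}{\dx x^{(j)}}u^{(i)}(s,X^{(1)}_s,X^{(2)}_s)$ (suitably rescaled by $\frac{\dx}{\dx x^{(2)}}X^{(2)}$), one recovers exactly the backward dynamics written in \eqref{eq:fbsde4} without the truncation. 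By uniqueness of solutions of linear BSDEs with $BMO$ coefficients (Lemma \ref{lindimcont}), these processes coincide a.e.\ with the $Y^{(ij)}$ obtained from the decoupling field, provided $H$ is chosen large enough that the cutoff is passive.

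The crucial a priori estimate is the passivity of the cutoff. The drivers of the $Y^{(ij)}$-BSDEs are linear in $(Y^{(kl)\wedge H})$ with coefficients that are products of the bounded processes $Y^{(1)\wedge H}$, $Y^{(2)\wedge H}$ and the $BMO(\mathbb{P})$ processes $Z^{(1)}$, $Z^{(2)}$, all of which have norms controlled independently of $(t,x)\in I^M_{\mathrm{max}}\times\mathbb{R}^2$. The terminal conditions $g''(X^{(1)}_T)$, $-\delta''(X^{(2)}_T)$, $0$, $0$ are uniformly bounded by $\|g''\|_\infty+\|\delta''\|_\infty$. Invoking Lemma \ref{lindimcont} for this four-dimensional linear $BMO$-BSDE yields a uniform $L^\infty$ bound $M$ on $(Y^{(ij)})$ depending only on $T$, $\|g''\|_\infty$, $\|\delta''\|_\infty$, the sup-norms of $Y^{(1)}$, $Y^{(2)}$ and the $BMO$-norms of $Z^{(1)}$, $Z^{(2)}$. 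Choosing $H>M$ makes $Y^{(ij)\wedge H}=Y^{(ij)}$, so the truncated and untruncated systems coincide on $I^M_{\mathrm{max}}$, which gives the claimed identifications $u^{(ij)}=\frac{\dx^2}{\dx x^{(j)}\,\dx x^{(i)}}u$ a.e.

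To conclude $I^M_{\mathrm{max}}=[0,T]$, observe that the bound $M$ controls $\|u^{(ij)}\|_\infty$ uniformly on $I^M_{\mathrm{max}}$, and the same argument applied to the (strong) regularity of $u^{(ij)}$ bounds $L_{u^{(ij)},x}$ uniformly away from $L_{\sigma,z}^{-1}=\infty$; together with the analogous uniform control of $u^{(0)},u^{(1)},u^{(2)}$ already established, Lemma \ref{explosionM} forbids a singularity at any $t^M_{\mathrm{min}}>0$. I expect the main obstacle to be the a priori sup-norm estimate on $(Y^{(ij)})$: one has to be careful that the linear $BMO$-driver bookkeeping truly closes, in the sense that only \emph{uniformly} bounded quantities (independent of $(t,x)$ and of $H$) appear in the coefficients of the linear BSDE fed into Lemma \ref{lindimcont}. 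Once that is secured, the passive-cutoff principle and the propagation of the derivative identifications go through by the same mechanism as in Lemma \ref{lem:fbsde3}.
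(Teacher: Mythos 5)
Your overall strategy -- truncate, obtain the decoupling field via the \textup{(MLLC)} theory, import the first-order identifications from Lemma \ref{lem:fbsde3}, establish the passivity of the cutoff via Lemma \ref{lindimcont}, and finish by precluding explosion via Lemma \ref{explosionM} -- is exactly the paper's. The second-order bookkeeping you describe (linear drivers in $Y^{(kl)\wedge H}$ with uniformly controlled coefficients, terminal data bounded by $\|g''\|_\infty+\|\delta''\|_\infty$, one application of Lemma \ref{lindimcont}, then take $H$ large) is correct and matches the paper.

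However, your final step leaves a genuine gap. Bounding $\|u^{(ij)}\|_\infty$ is not what Lemma \ref{explosionM} requires; you need a uniform bound on $L_{u^{(ij)},x}$, i.e.\ on the weak derivatives $\frac{\dx}{\dx x^{(i)}}u^{(jk)}$, which are \emph{third}-order derivatives of $u$. You wave at this with ``the same argument applied to the (strong) regularity of $u^{(ij)}$,'' but this is precisely the most delicate part, and it does not simply iterate: differentiating the $Y^{(jk)}$-BSDEs produces terms involving $\frac{\dx}{\dx x^{(i)}}Z^{(jk)}$, $\frac{\dx}{\dx x^{(i)}}Y^{(l)}$, $\frac{\dx}{\dx x^{(i)}}Z^{(l)}$, and -- through the chain rule -- $\frac{\dx}{\dx x^{(i)}}X^{(2)}$ and $\big(\frac{\dx}{\dx x^{(2)}}X^{(2)}\big)^{-1}$. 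To feed this into Lemma \ref{lindimcont} with constants independent of $(t,x)$ you need the uniform bounds on $Z^{(i)}=\frac{\dx}{\dx x^{(1)}}u^{(i)}(\cdot,X^{(1)},X^{(2)})$ \emph{and} on $\frac{\dx}{\dx x^{(1)}}X^{(2)}$, $\frac{\dx}{\dx x^{(2)}}X^{(2)}$, $\big(\frac{\dx}{\dx x^{(2)}}X^{(2)}\big)^{-1}$, all of which come from Lemma \ref{lem:Zcontrol2} (which in turn requires first verifying continuity of $\frac{\dx}{\dx x^{(1)}}u^{(i)}$ via Remark \ref{deterpluscontin}). Your proposal never invokes Lemma \ref{lem:Zcontrol2}, so the coefficient control in the third-order linear BSDE does not close, and the contradiction with Lemma \ref{explosionM} is not actually reached. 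You also misplace the ``main obstacle'': the second-order sup-norm estimate is the easy part; the hard part is precisely this third-order estimate and the preparatory forward-derivative bounds.
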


\begin{proof}
  The proof is in parts akin to the proof of Lemma \ref{lem:ext} and we will again seek to keep these parts short. 
  
  Assume $I^M_{\mathrm{max}}=(t^M_{\mathrm{min}},T]$ and $t\in I^M_{\mathrm{max}}$. Let $u^{(i)}$ and $u^{(jk)}$, $i=0,1,2$, $j,k=1,2$, be the associated weakly regular decoupling field on $I^M_{\mathrm{max}}$. We want to control $\frac{\dx}{\dx x}u^{(i)}u(t,\cdot)$, $\frac{\dx}{\dx x}u^{(jk)}(t,\cdot)$, $i=0,1,2$ and $j,k=1,2$, independently of $t$ to create a contradiction with respect to Lemma \ref{explosionM}. 
  
  For this purpose we consider the first three components of the decoupling field. Since $\left(u^{(i)}\right)_{i=0,1,2}$ is clearly a weakly regular Markovian decoupling field to the problem \eqref{eq:fbsde3} the mappings $\left(u^{(i)}\right)_{i=0,1,2}$ in \eqref{eq:fbsde3} and in \eqref{eq:fbsde4} are identical according to Theorem \ref{uniqM} and the processes $X^{(1)}$, $X^{(2)}$, $Y^{(i)}$, $Z^{(i)}$, $i=0,1,2$, in \eqref{eq:fbsde3} must coincide with the identically denoted processes in \eqref{eq:fbsde4} according to strong regularity. This is true for every $t\in I^M_{\mathrm{max}}$ and initial condition $x\in\mathbb{R}^2$. Hence, we can apply Theorem \ref{lem:fbsde3} and get
  \begin{equation*}
    u^{(0)}=u,\quad u^{(1)}=\frac{\dx}{\dx x^{(1)}}u, \quad u^{(2)}=\frac{\dx}{\dx x^{(2)}}u\quad \textrm{ on}\quad I^M_{\mathrm{max}}.
  \end{equation*}
  In particular, the last two functions are uniformly bounded.
  
  Furthermore, we saw in the proof of Theorem \ref{lem:fbsde3} that  $Y^{(1)}$ and $Y^{(2)}$ are uniformly bounded independently of $(t,x)$ and $Z^{(1)}$ and $Z^{(2)}$ are $BMO(\mathbb{P})$ processes with uniformly bounded $BMO(\mathbb{P})$-norms independently of $(t,x)$. Especially, $Y^{(i)\wedge H}=Y^{(i)}$ for $i=1,2$ if we make $H$ large enough. We will make this assumption from now on. 
  
  The processes $Y^{(jk)}$, $j,k=1,2$, satisfy
  \begin{align*}
    Y^{(jk)}_s =&Y^{(jk)}_T- \int_{s}^T Z^{(jk)}_r \dx W_r \\
     &-\int_{s}^{T}\bigg(\sum_{l_1,l_2,l_3,l_4=1,2} \alpha^{(jk)}_{l_1,l_2,l_3,l_4}Y^{(l_1)}_rZ^{(l_2)}_rY^{(l_3l_4)\wedge H}_r+Y^{(1)}_rY^{(2)}_rZ^{(jk)}_r\bigg)\dx r,
  \end{align*}
  where  $\alpha^{(jk)}_{l_1,l_2,l_3,l_4}$ is always either $0$ or $-2$. Since due to the structure of the terminal condition $Y^{(jk)}_T$ are uniformly bounded, we can apply Lemma \ref{lindimcont} to obtain uniform boundedness of $Y^{(jk)}$ as processes on $[t,T]$ independently of $(t,x)$.
  
  In particular, $Y^{(jk)\wedge H}=Y^{(jk)}$ for $j,k=1,2$ if we make $H$ large enough. We will make this assumption from now on. 
  
  This implies that the processes $Y^{(jk)}$, $j,k=1,2$, coincide with the identically denoted processes in the proof of Theorem \ref{lem:fbsde3} since they satisfy the same stochastic differential equations with the same terminal condition and we can apply Lemma \ref{lindimcont} to the difference of these four-dimensional processes obtaining that this difference must vanish. This implies however that $Y^{(jk)}_t=\frac{\dx}{\dx x^{(k)}}u^{(j)}\left(t, x^{(1)}, x^{(2)}\right)$ for almost all $(x^{(1)}, x^{(2)})$. So we obtain $u^{(jk)}=\frac{\dx}{\dx x^{(k)}}u^{(j)}$, $j,k=1,2$ a.e and these functions are uniformly bounded according to Theorem \ref{lem:fbsde3}. 
  
  According to Remark \ref{deterpluscontin}, the functions $\frac{\dx}{\dx x^{(1)}}u=u^{(1)}$ and $\frac{\dx}{\dx x^{(1)}}u^{(i)}=u^{(i1)}$, $i=1,2$, are continuous on $[t,T]\times\mathbb{R}^2$ and we can apply Lemma \ref{lem:Zcontrol2} to get $Z^{(i)}=\frac{\dx}{\dx x^{(1)}}u^{(i)}\big(\cdot, X^{(1)}_\cdot, X^{(2)}_\cdot\big)$ for $i=0,1,2$. Hence, $Z^{(i)}$ are uniformly bounded for $i=0,1,2$.
  
  Let us now analyze higher order derivatives $\frac{\dx}{\dx x^{(i)}}u^{(jk)}$ for $i,j,k=1,2$. As usual this is done by investigating equations characterizing the dynamics of $\frac{\dx}{\dx x^{(i)}}Y^{(jk)}$ for $i,j,k=1,2$. Using strong regularity we obtain
  \begin{align*}
    \frac{\dx}{\dx x^{(i)}}Y^{(jk)}_s = &\frac{\dx}{\dx x^{(i)}}Y^{(jk)}_T- \int_{s}^T \frac{\dx}{\dx x^{(i)}}Z^{(jk)}_r \dx W_r \\
    &-\int_{s}^{T}\bigg(G^{(jk)}_r+\sum_{l_1,l_2,l_3,l_4=1,2}\alpha^{(jk)}_{l_1,l_2,l_3,l_4}H^{(jk),l_1,l_2,l_3,l_4}_r\bigg)\dx r,
  \end{align*}
  where
  \begin{align*}
    & H^{i,(jk),l_1,l_2,l_3,l_4}_r=\frac{\dx}{\dx x^{(i)}}Y^{(l_1)}_rZ^{(l_2)}_rY^{(l_3l_4)}_r+ Y^{(l_1)}_r\frac{\dx}{\dx x^{(i)}}Z^{(l_2)}_rY^{(l_3l_4)}_r+Y^{(l_1)}_rZ^{(l_2)}_r\frac{\dx}{\dx x^{(i)}}Y^{(l_3l_4)}_r,\\
    &G^{i,(jk)}_r=\frac{\dx}{\dx x^{(i)}}Y^{(1)}_rY^{(2)}_rZ^{(jk)}_r+Y^{(1)}_r\frac{\dx}{\dx x^{(i)}}Y^{(2)}_rZ^{(jk)}_r+Y^{(1)}_rY^{(2)}_r\frac{\dx}{\dx x^{(i)}}Z^{(jk)}_r.
  \end{align*}
  This already implies that $\frac{\dx}{\dx x^{(i)}}Y^{(jk)}$, $i,j,k=1,2$, is uniformly bounded according to Lemma \ref{lindimcont}. The lemma is applicable since
  \begin{itemize}
    \item $\frac{\dx}{\dx x^{(i)}}Y^{(jk)}_T$ is either $0$ or has the structure $g^{(3)}(X^{(1)}_T)\frac{\dx}{\dx x^{(i)}}X^{(1)}_T$ or $-\delta^{(3)}(X^{(2)}_T)\frac{\dx}{\dx x^{(i)}}X^{(2)}_T$ which is uniformly bounded due to the Lipschitz continuity of $g'',\delta''$ and Lemma \ref{lem:Zcontrol2},
    \item $\frac{\dx}{\dx x^{(i)}}Y^{(l)}_r=\frac{\dx}{\dx x^{(1)}}u^{(l)}(r,X^{(1)}_r, X^{(2)}_r)\frac{\dx}{\dx x^{(i)}}X^{(1)}_r+\frac{\dx}{\dx x^{(2)}}u^{(l)}(r,X^{(1)}_r, X^{(2)}_r)\frac{\dx}{\dx x^{(i)}}X^{(2)}_r$ is also uniformly bounded according to Theorem \ref{lem:fbsde3} and  Lemma \ref{lem:Zcontrol2},
    \item $\frac{\dx}{\dx x^{(i)}}Y^{(jk)}_r=\frac{\dx}{\dx x^{(1)}}u^{(jk)}(r,X^{(1)}_r, X^{(2)}_r)\frac{\dx}{\dx x^{(i)}}X^{(1)}_r+\frac{\dx}{\dx x^{(2)}}u^{(jk)}(r,X^{(1)}_r, X^{(2)}_r)\frac{\dx}{\dx x^{(i)}}X^{(2)}_r$ is a bounded processes on $[t,T]$ according to Lemma \ref{lem:Zcontrol2} (but not necessarily uniformly in $t$ at this point),
    \item $\frac{\dx}{\dx x^{(i)}}Z^{(l)}_r=\frac{\dx}{\dx x^{(i)}}u^{(l)}\left(r, X^{(1)}_r, X^{(2)}_r\right)=\frac{\dx}{\dx x^{(i)}}Y^{(l1)}_r$ for all $l=1,2$,
    \item $Y^{(l_1l_2)}$, $Y^{(l)}$, $Z^{(l)}$ are always uniformly bounded as was already mentioned,
    \item $Z^{(l_1l_2)}$ are $BMO(\mathbb{P})$-processes with uniformly bounded $BMO(\mathbb{P})$ - norms according to the equations describing $Y^{(l_1l_2)}$ and Theorem \ref{BSDEBMO}.
  \end{itemize}
  Let $j,k\in\{1,2\}$. As a consequence of the decoupling condition together with the chain rule of Lemma \ref{chainruleambr}, we have
  \begin{align*}
    &\frac{\dx}{\dx x^{(1)}}Y^{(jk)}_r=\frac{\dx}{\dx x^{(1)}}u^{(jk)}(r,X^{(1)}_r, X^{(2)}_r)+\frac{\dx}{\dx x^{(2)}}u^{(jk)}(r,X^{(1)}_r, X^{(2)}_r)\frac{\dx}{\dx x^{(1)}}X^{(2)}_r,\\
    &\frac{\dx}{\dx x^{(2)}}Y^{(jk)}_r=\frac{\dx}{\dx x^{(2)}}u^{(jk)}(r,X^{(1)}_r, X^{(2)}_r)\frac{\dx}{\dx x^{(2)}}X^{(2)}_r.
  \end{align*}
  Using the boundedness of $\big(\frac{\dx}{\dx x^{(2)}}X^{(2)}\big)^{-1}$, the last equation implies that $\frac{\dx}{\dx x^{(2)}}u^{(jk)}(t,x^{(1)},x^{(2)})$ is bounded for almost all $x^{(1)},x^{(2)}$ by a uniform constant. Now the first equation together with uniform boundedness of $\frac{\dx}{\dx x^{(1)}}X^{(2)}_r$ and $\frac{\dx}{\dx x^{(1)}}Y^{(jk)}_r$ implies uniform boundedness of $\frac{\dx}{\dx x^{(1)}}u^{(jk)}$ as well.
  
  Considering Lemma \ref{explosionM} we have a contradiction and the proof is complete.
\end{proof}

\begin{thm}\label{thm:strong}
  Let $T=1$ and $g$, $\delta$, $g'$, $\delta'$, $g''$, $\delta''$ be Lipschitz continuous functions. Suppose additionally that $g$ is increasing and not constant.
  Then the Markovian decoupling field $u$ from Lemma \ref{lem:ext} fulfills the requirements of Theorem \ref{thm:mainresult}.
\end{thm}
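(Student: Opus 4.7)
The goal is to verify the two hypotheses of Theorem~\ref{thm:mainresult} on $u^{(1)} := \frac{\dx}{\dx x^{(1)}} u$: strict positivity on $[0,1)\times\mathbb{R}^2$ and Lipschitz continuity in the first two components on compact subsets. Since $g,\delta,g',\delta',g'',\delta''$ are assumed Lipschitz, Lemma~\ref{lem:fbsde4} applies. It gives that $u$ is three times weakly differentiable in $x$ with uniformly bounded derivatives, identifies $u^{(1)}$ as a weakly regular Markovian decoupling field of the $Y^{(1)}$-equation, and, via Remark~\ref{deterpluscontin}, ensures that $u^{(1)}$ is continuous on $[0,1]\times\mathbb{R}^2$. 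These are the main inputs.

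For strict positivity I intend to re-use the martingale representation constructed in the proof of Lemma~\ref{lem:ext}. Starting the forward system at an arbitrary $(s,x^{(1)},x^{(2)})\in[0,1)\times\mathbb{R}^2$, the process $U_r=u^{(1)}(r,X^{(1)}_r,X^{(2)}_r)$ satisfies $U_r=U_s+\int_s^r \check Z_q\,\dx \tilde W_q$ for a Brownian motion $\tilde W$ under an equivalent measure $\tilde{\mathbb{P}}$, hence is a bounded $\tilde{\mathbb{P}}$-martingale with terminal value $U_T=g'(X^{(1)}_T)$. Since $\tilde{\mathbb{P}}\sim\mathbb{P}$ and $X^{(1)}_T-x^{(1)}=W_T-W_s$ is centred Gaussian with variance $T-s>0$, the conditional $\tilde{\mathbb{P}}$-law of $X^{(1)}_T$ has full support on $\mathbb{R}$. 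Under our assumptions $g'$ is continuous (because $g''$ is Lipschitz), non-negative (because $g$ is non-decreasing) and not identically zero (because $g$ is not constant), hence strictly positive on an open interval. It follows that
\begin{equation*}
u^{(1)}(s,x^{(1)},x^{(2)}) = U_s = \mathbb{E}^{\tilde{\mathbb{P}}}[g'(X^{(1)}_T)] > 0.
\end{equation*}

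For Lipschitz continuity, spatial regularity in $x^{(1)}$ is immediate: $u^{(11)}=\frac{\dx}{\dx x^{(1)}} u^{(1)}$ is globally bounded by Lemma~\ref{lem:fbsde4}, so $u^{(1)}$ is globally Lipschitz in $x^{(1)}$. Temporal regularity requires more work. The plan is to apply It\^o's formula to $Y^{(1)}_s = u^{(1)}(s, X^{(1)}_s, X^{(2)}_s)$, using the identifications $Z^{(0)}=u^{(1)}$, $Z^{(1)}=u^{(11)}$, $Y^{(2)}=u^{(2)}$, and to match the resulting drift with the driver of the BSDE for $Y^{(1)}$. This yields a semilinear parabolic identity
\begin{equation*}
\partial_s u^{(1)} + \tfrac12\, u^{(111)} + (u^{(1)})^2 u^{(12)} - 2\, u^{(1)} u^{(2)} u^{(11)} = 0,
\end{equation*}
in which every summand besides $\partial_s u^{(1)}$ is a uniformly bounded function by Lemma~\ref{lem:fbsde4}; thus $\partial_s u^{(1)}$ is uniformly bounded on $[0,1)\times\mathbb{R}^2$ and $u^{(1)}$ is globally Lipschitz in $s$. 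The main obstacle is justifying this identity rigorously at the level of weak derivatives, which can be bypassed by the purely probabilistic route of writing $u^{(1)}(s,x)-u^{(1)}(s',x)$ as the BSDE drift on $[s,s']$ (of order $O(s'-s)$, by boundedness of all factors) plus $\mathbb{E}[u^{(1)}(s',X^{(1)}_{s'},X^{(2)}_{s'})-u^{(1)}(s',x)]$, and performing a second-order Taylor expansion of $u^{(1)}(s',\cdot,\cdot)$ around $x$: the first-order Brownian term vanishes in expectation, while boundedness of $u^{(11)},u^{(12)}$ and their spatial derivatives (again from Lemma~\ref{lem:fbsde4}) controls the remaining first-order $X^{(2)}$-term and the quadratic remainder by $O(s'-s)$.
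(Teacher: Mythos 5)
Your proposal is correct and follows essentially the same route as the paper: strict positivity via the representation $u^{(1)}(s,x)=\mathbb{E}_{\mathbb{Q}}\bigl[g'(X^{(1)}_T)\bigr]>0$ under an equivalent measure, and time-Lipschitz continuity via the decomposition of $u^{(1)}(s,x)-u^{(1)}(t,x)$ into a BSDE-drift term of order $O(s-t)$ plus a Gaussian-average Taylor remainder controlled by the uniform bounds on the second and third derivatives from Lemma~\ref{lem:fbsde4} (the paper makes the quadratic-remainder bound precise via Lemma~\ref{secdivcon}). Two minor remarks: the semilinear identity you write as a heuristic should read $\partial_s u^{(1)}+\tfrac12 u^{(111)}+(u^{(1)})^2u^{(12)}+2u^{(1)}u^{(2)}u^{(11)}=0$ (the last sign is off), which is harmless since you correctly fall back to the probabilistic decomposition; and the paper dispenses with continuity of $g'$ by observing that the $\mathbb{Q}$-law of $X^{(1)}_T$ is equivalent to Lebesgue measure, so $\mathbb{E}_{\mathbb{Q}}\bigl[g'(X^{(1)}_T)\bigr]=0$ would force $g'=0$ a.e.\ and hence $g$ constant.
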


\begin{proof}
  Let $\left(u^{(0)},u^{(1)},u^{(2)},u^{(11)},u^{(12)},u^{(21)},u^{(22)}\right)$ be the unique Markovian decoupling field to the problem \eqref{eq:fbsde4} on $[0,T]$. We have $u^{(0)}=u$, $u^{(1)}=\frac{\dx}{\dx x^{(1)}}u$, etc. according to Theorem \ref{lem:fbsde4}. 
  
  Let us show that $\frac{\dx}{\dx x^{(1)}}u$ is Lipschitz continuous in the first component. For this purpose we consider for a starting time $t\in [0,T]$ and initial condition $x\in\mathbb{R}^2$ the associated FBSDE \eqref{eq:fbsde4} on $[t,1]$. Recall that
  \begin{equation}\label{thisdec}
    Y^{(1)}_s=\frac{\dx}{\dx x^{(1)}}u(s, X^{(1)}_s, X^{(2)}_s),\quad s\in[t,1],
  \end{equation}
  satisfies
  \begin{equation}\label{thisy1}
    Y^{(1)}_s=Y^{(1)}_t + \int_t^s \left(-2Z^{(0)}_rY^{(2)}_r\right)Z^{(1)}_r\dx r+\int_t^sZ^{(1)}_r\dx W_r,\quad s\in[t,1],
  \end{equation}
  where
  \begin{itemize}
    \item $Z^{(0)}=\frac{\dx}{\dx x^{(1)}}u^{(0)}\left(\cdot, X^{(1)}_\cdot, X^{(2)}_\cdot\right)=Y^{(1)}$ a.e. according to Lemma \ref{lem:Zcontrol2}, which is applicable since $\left(\frac{\dx}{\dx x^{(1)}}u^{(i)}\right)_{i=1,2}=\left(u^{(i1)}\right)_{i=1,2}$ and $\frac{\dx}{\dx x^{(1)}}u^{(0)}=u^{(1)}$ are continuous on $[t,1]$ according to Remark \ref{deterpluscontin},
    \item $Z^{(0)}=Y^{(1)}$ and $Y^{(2)}$ are bounded by $\left\|\frac{\dx}{\dx x^{(1)}}u\right\|_\infty$ and $\left\|\frac{\dx}{\dx x^{(2)}}u\right\|_\infty$,
    \item $Z^{(1)}=\frac{\dx}{\dx x^{(1)}}u^{(1)}\big(\cdot, X^{(1)}_\cdot, X^{(2)}_\cdot\big)$ a.e. according to Lemma \ref{lem:Zcontrol2}, which is applicable as already mentioned. So $Z^{(1)}$ is bounded by $\left\|\frac{\dx}{\dx x^{(1)}}u^{(1)}\right\|_\infty$.
  \end{itemize}
  Let $s\in (t,1]$. Using the triangular inequality we obtain
  \begin{align*}
    \left|\frac{\dx}{\dx x^{(1)}}u(s,x)-\frac{\dx}{\dx x^{(1)}}u(t,x)\right|
    \leq &\left|\frac{\dx}{\dx x^{(1)}}u(s,x)-\mathbb{E}\left[\frac{\dx}{\dx x^{(1)}}u(s, X^{(1)}_s, X^{(2)}_s)\right]\right| \\
         &+\left|\mathbb{E}\left[\frac{\dx}{\dx x^{(1)}}u(s, X^{(1)}_s, X^{(2)}_s)\right]-\frac{\dx}{\dx x^{(1)}}u(t,x)\right|.
  \end{align*}
  Applying the triangular inequality for a second time together with \eqref{thisdec} we get
  \begin{align*}
    \bigg|\frac{\dx}{\dx x^{(1)}}&u(s,x)-\frac{\dx}{\dx x^{(1)}}u(t,x)\bigg|\\
    \leq &\left|\frac{\dx}{\dx x^{(1)}}u(s,x^{(1)}, x^{(2)})-\mathbb{E}\left[\frac{\dx}{\dx x^{(1)}}u(s, X^{(1)}_s, x^{(2)})\right]\right| \\
    & +\left|\mathbb{E}\left[\frac{\dx}{\dx x^{(1)}}u(s, X^{(1)}_s, x^{(2)})\right]-\mathbb{E}\left[\frac{\dx}{\dx x^{(1)}}u(s, X^{(1)}_s, X^{(2)}_s)\right]\right|+
    \left|\mathbb{E}\left[Y^{(1)}_s-Y^{(1)}_t\right]\right|.
  \end{align*}
  Let us now control the three summands on the right-hand-side separately.

  \textsc{First summand:} Let us define
  \begin{equation*}
    \varphi(z):=\frac{\dx}{\dx x^{(1)}}u(s,x^{(1)}, x^{(2)})-\frac{\dx}{\dx x^{(1)}}u(s, x^{(1)}+z, x^{(2)}),\quad z\in\mathbb{R},
  \end{equation*}
  and note:
  \begin{itemize}
    \item $\left|\frac{\dx}{\dx x^{(1)}}u(s,x^{(1)}, x^{(2)})-\mathbb{E}\left[\frac{\dx}{\dx x^{(1)}}u(s, X^{(1)}_s, x^{(2)})\right]\right|=
    \left|\int_{\mathbb{R}}\varphi(\sqrt{s-t}z)\frac{1}{\sqrt{2\pi}}e^{-\frac{1}{2}z^2}\dx z\right|$ as
    $X^{(1)}_s=x^{(1)}+W_s-W_t\sim\mathcal{N}\left(x^{(1)},s-t\right),$
    \item $\varphi$ is Lipschitz continuous with Lipschitz constant $L_{u^{(1)}}$, which is the Lipschitz constant of
    $\frac{\dx}{\dx x^{(1)}}u=u^{(1)}$ w.r.t. the last two components, and $\varphi(0)=0$,
    \item $\varphi'$ is Lipschitz continuous with Lipschitz constant $L_{u^{(11)}}$, which is the Lipschitz constant of $\frac{\dx^2}{\left(\dx x^{(1)}\right)^2}u=u^{(11)}$ w.r.t. the last two components.
  \end{itemize}
  And so using Lemma \ref{secdivcon} we obtain
  \begin{equation*}
    \left|\frac{\dx}{\dx x^{(1)}}u(s,x^{(1)}, x^{(2)})-\mathbb{E}\left[\frac{\dx}{\dx x^{(1)}}u(s, X^{(1)}_s, x^{(2)})\right]\right|\leq \frac{1}{2}(s-t)\cdot L_{u^{(11)}}.
  \end{equation*}
  
  \textsc{Second summand:} One sees that 
  \begin{align*}
    \bigg|\mathbb{E}\bigg[\frac{\dx}{\dx x^{(1)}}u(&s, X^{(1)}_s, x^{(2)})\bigg]-\mathbb{E}\bigg[\frac{\dx}{\dx x^{(1)}}u(s, X^{(1)}_s, X^{(2)}_s)\bigg]\bigg| \\
    &\leq\mathbb{E}\left[\left|\frac{\dx}{\dx x^{(1)}}u(s, X^{(1)}_s, x^{(2)})-\frac{\dx}{\dx x^{(1)}}u(s, X^{(1)}_s, X^{(2)}_s)\right|\right]\leq
    L_{u^{(1)}}\mathbb{E}\left[\left|X^{(2)}_s-x^{(2)}\right|\right],
  \end{align*}
  while
  \begin{equation*}
    \left|X^{(2)}_s-x^{(2)}\right|=\left|\int_t^s\left(Z^{(0)}_r\right)^2\dx r\right|\leq (s-t)\cdot \left\|Y^{(1)}\right\|^2_\infty\leq (s-t)\cdot \left\|\frac{\dx}{\dx x^{(1)}}u\right\|^2_\infty \textrm{ a.s.},
  \end{equation*}
  where we used $Z^{(0)}=Y^{(1)}$ a.e.
  
  \textsc{Third summand:} Using \eqref{thisy1} we have
  \begin{align*}
    \bigg|\mathbb{E}\bigg[Y^{(1)}_s-Y^{(1)}_t\bigg]\bigg|
    \leq 2\cdot(t-s)\cdot\left\|\frac{\dx}{\dx x^{(1)}}u\right\|_\infty\cdot\left\|\frac{\dx}{\dx x^{(2)}}u\right\|_\infty\cdot\left\|\frac{\dx}{\dx x^{(1)}}u^{(1)}\right\|_\infty.
  \end{align*}
  
  \textsc{Conclusion:} We have shown
  \begin{equation*}
    \left|\frac{\dx}{\dx x^{(1)}}u(s,x)-\frac{\dx}{\dx x^{(1)}}u(t,x)\right|\leq C |s-t|,
  \end{equation*}
  with some constant $C\in[0,\infty)$, which does not depend on $t,x$ or $s$. In other words $\frac{\dx}{\dx x^{(1)}}u$ is Lipschitz continuous in time. Since it is also Lipschitz continuous in space, it is a Lipschitz continuous function on its whole domain $[0,T]\times\mathbb{R}^2$.

  It remains to show that $\frac{\dx}{\dx x^{(1)}}u$ is $\mathbb{R}\backslash\{0\}$-valued on $[0,1)\times\mathbb{R}^2$:
  Clearly $g'$ is non-negative and does not vanish. Let $t\in [0,1)$, $x\in\mathbb{R}^2$. Consider the associated FBSDE on $[t,1]$. Using \eqref{thisy1} we can write
  \begin{equation*}
    \frac{\dx}{\dx x^{(1)}}u(s,x)=g'(X^{(1)}_T)-\int_t^T Z^{(1)}_r\dx \left(W_r+\int_t^r\left(-2Y^{(1)}_\kappa Y^{(2)}_\kappa\right)\dx\kappa\right). 
  \end{equation*} 
  So there is a probability measure $\mathbb{Q}\sim\mathbb{P}$ such that $\frac{\dx}{\dx x^{(1)}}u(s,x)=\mathbb{E}_{\mathbb{Q}}\big[g'\big(X^{(1)}_T\big)\big]\geq 0$. Now note that $X^{(1)}_T=x^{(1)}+W_T-W_t$ has a non-degenerate normal distribution w.r.t. $\mathbb{P}$. Therefore its distribution is equivalent to the Lebesgue measure. But since $\mathbb{Q}\sim\mathbb{P}$ the distribution of $X^{(1)}_T$ w.r.t. $\mathbb{Q}$ must also be equivalent to the Lebesgue measure. This shows
  \begin{equation*}
    \frac{\dx}{\dx x^{(1)}}u(s,x)=\mathbb{E}_{\mathbb{Q}}\left[g'\left(X^{(1)}_T\right)\right]>0
  \end{equation*}
  since otherwise $g'=0$ a.e. would hold.
\end{proof}

In Theorem \ref{thm:strong} we implicitly solved the Skorokhod embedding problem. To obtain a strong solution is now an immediate consequence.

\begin{corollary}
  Provided $G_0$, $\alpha$ and $\beta$ as in \eqref{eq:gaussian} together with a probability measure $\nu$ such that the corresponding $g$ and $\delta$ fulfil the requirements of Theorem \ref{thm:strong}. Then, there exists a bounded stopping time $\tau$ and a constant $c\in \mathbb{R}$ such that $c+G_{\tau}$ has the law $\nu$.
\end{corollary}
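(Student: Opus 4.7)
The plan is to simply assemble the pieces already available. Under the stated hypotheses on $g$ and $\delta$, Theorem \ref{thm:strong} tells us that the unique weakly regular Markovian decoupling field $u$ from Lemma \ref{lem:ext} satisfies the regularity conditions required by Theorem \ref{thm:mainresult}, namely that $\frac{\dx}{\dx x^{(1)}}u$ is $\mathbb{R}\setminus\{0\}$-valued on $[0,1)\times\mathbb{R}^2$ and Lipschitz in the first two components on compact subsets. Invoking Theorem \ref{thm:weak solution} then produces a Brownian motion $B$, a random time $\tilde\tau\leq H^{-1}(L_g^2)$ (hence bounded) and a constant $c\in\mathbb{R}$ such that $c+\int_0^{\tilde\tau}\alpha_s\dx s+\int_0^{\tilde\tau}\beta_s\dx B_s$ has law $\nu$, while Theorem \ref{thm:mainresult} upgrades $\tilde\tau$ to a genuine stopping time with respect to the filtration $(\mathcal{F}^B_s)$ generated by $B$.

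The remaining piece, which is the ``fourth step'' of the algorithm sketched in Section \ref{sec:SEP}, is the pathwise transfer from $B$ to the originally given Brownian motion $W$. Since $\tilde\tau$ is an $(\mathcal{F}^B_s)$-stopping time, there exists a measurable functional $\Psi\colon C([0,\infty);\mathbb{R})\to[0,\infty)$ on Wiener space with $\tilde\tau=\Psi(B)$ almost surely. I would then define $\tau:=\Psi(W)$, which is automatically a stopping time for the filtration $(\mathcal{F}_t)$ generated by $W$ and is bounded by $H^{-1}(L_g^2)$. Because $\alpha$ and $\beta$ are deterministic, the stochastic integral $\int_0^{\cdot}\beta_s\dx W_s$ admits a pathwise representation as a measurable functional of the Brownian path (obtained e.g.\ by a Riemann-sum or Wong--Zakai approximation), and the same functional applied to $B$ reproduces $\int_0^{\cdot}\beta_s\dx B_s$. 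Hence the law of $c+G_0+\int_0^{\tau}\alpha_s\dx s+\int_0^{\tau}\beta_s\dx W_s$ coincides with that of $c+G_0+\int_0^{\tilde\tau}\alpha_s\dx s+\int_0^{\tilde\tau}\beta_s\dx B_s$, which equals $\nu$ after absorbing $G_0$ into the additive constant.

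No serious obstacle remains at this point: the deep work has been done in Theorem \ref{thm:strong}, which secures the regularity of $u$, and in Theorem \ref{thm:mainresult}, which promotes $\tilde\tau$ to an $(\mathcal{F}^B_\cdot)$-stopping time. The only mild subtlety is the distributional transfer argument, but because the integrands are deterministic functions of time, the stochastic integrals are measurable functionals of the Brownian path, so the equality in law reduces to the statement that $B$ and $W$ induce the same Wiener measure on path space.
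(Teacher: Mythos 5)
Your proof is correct and follows exactly the route the paper intends: Theorem \ref{thm:strong} feeds into Theorem \ref{thm:mainresult} to upgrade $\tilde\tau$ to an $(\mathcal{F}^B_\cdot)$-stopping time, Theorem \ref{thm:weak solution} gives the bound $\tilde\tau\leq H^{-1}(L_g^2)$ and the embedding with respect to $B$, and the final step is the pathwise/functional transfer from $B$ back to $W$ sketched as ``step four'' in Section~\ref{sec:SEP}. The paper leaves the corollary without proof as an ``immediate consequence,'' and your write-up is precisely the argument one would supply; the only cosmetic difference is that you invoke an abstract Wiener-space functional $\Psi$ with $\tilde\tau=\Psi(B)$, whereas the proof of Theorem \ref{thm:mainresult} in fact exhibits $\tilde\tau$ explicitly as the hitting time of $1$ by the solution $\sigma$ of the Lipschitz system driven by $B$, so one could define $\tau$ directly by solving the same system driven by $W$ and taking the analogous hitting time.
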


\appendix

\section{BMO-Processes and their properties}\label{sec:appendix}

Let $(\Omega,\mathcal{F}_T,(\mathcal{F}_t)_{t\in[0,T]},\mathbb{P})$ be a complete filtered probability space such that the filtration satisfies the usual hypotheses. Moreover, we assume that there exists a $d$-dimensional Brownian motion $W$ on $[0,T]$ independent of $\mathcal{F}_0$ and that $\mathcal{F}_t=\sigma(\mathcal{F}_0,\mathcal{F}^W_t)$, where $\mathcal{F}^W$ is the natural filtration generated by $W$ and $\mathcal{F}_0$ contains all null sets.

For a probability measure $\mathbb{Q}$ and any $q>0$ and $m\in\mathbb{N}$ we define $\mathcal{H}^q(\mathbb{R}^m,\mathbb{Q})$ as the space of all progressively measurable processes $(Z_t)_{t\in[0,T]}$ with values in $\mathbb{R}^m$ such that $\|Z\|_{\mathcal{H}^q}^q:=\mathbb{E}_\mathbb{Q}\big[\big(\int_0^T|Z_s|^2\dx s\big)^{q/2}\big]<\infty$.

\begin{definition}
  Let $\mathbb{Q}\sim\mathbb{P}$ be an equivalent probability measure and define
  \begin{multline*}
    BMO(\mathbb{Q}):=\big\{Z\colon[0,T]\times\Omega\,\big | \,Z \text{ is progressively measurable and vector-valued } \text{ s.t. } \\
     \exists C\geq 0\,\forall t\in[0,T]: \mathbb{E}_\mathbb{Q}\big[\int_t^T|Z_s|^2\dx s\big|\mathcal{F}_t\big]\leq C \text{ a.s.} \big\}.
  \end{multline*}
  By vector-valued we mean that $Z$ assumes values in some normed vector space. 
\end{definition}
The smallest constant $C$ such that the above bound holds is denoted by $\|Z\|^2_{BMO(\mathbb{Q})}$. For processes $Z\notin BMO(\mathbb{Q})$ we set $\|Z\|_{BMO(\mathbb{Q})}:=\infty$. Furthermore, we call a martingale $M$ a BMO-martingale if $$M_t=M_0+\int_0^t Z_s \dx W_s=:M_0+(Z\bullet W)_t$$ with some $\mathbb{R}^{1\times d}$-valued $Z\in BMO(\mathbb{P})$.

Also, if a progressively measurable process $Z$ is only defined on a subinterval of $[0,T]$, the statement $Z\in BMO(\mathbb{Q})$ means that its natural extension to $[0,T]$, obtained by setting it equal to $0$ everywhere outside its initial domain, is in $BMO(\mathbb{Q})$.

In the following we provide auxiliary results concerning BM0-processes.

\begin{lemma}\label{energ}
  For a probability measure $\mathbb{Q}\sim\mathbb{P}$ let $Z\in BMO(\mathbb{Q})$ be $\mathbb{R}^m$-valued. Then $Z\in\mathcal{H}^{2n}(\mathbb{R}^m,\mathbb{Q})$ for all $n\in\mathbb{N}$ and $\|Z\|_{\mathcal{H}^{2n}(\mathbb{R}^m,\mathbb{Q})}\leq \sqrt[2n]{n!}\,\|Z\|_{BMO(\mathbb{Q})}$.
\end{lemma}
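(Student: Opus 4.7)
The plan is to prove the stronger pointwise bound
\[
  \mathbb{E}_\mathbb{Q}\!\left[\Big(\int_t^T |Z_s|^2\dx s\Big)^n \,\Big|\, \mathcal{F}_t\right] \leq n!\, C^n \quad\text{a.s.}
\]
for every $t\in[0,T]$ and every $n\in\mathbb{N}$, where $C:=\|Z\|_{BMO(\mathbb{Q})}^2$. Setting $t=0$, taking expectations, and extracting the $2n$-th root then yields the claimed inequality, since by definition $\|Z\|_{\mathcal{H}^{2n}(\mathbb{R}^m,\mathbb{Q})}^{2n} = \mathbb{E}_\mathbb{Q}[(\int_0^T |Z_s|^2\dx s)^n]$.

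I would proceed by induction on $n$. The base case $n=1$ is exactly the defining property of $BMO(\mathbb{Q})$. For the induction step, set $f(t) := \int_t^T |Z_s|^2 \dx s$; since $f$ is absolutely continuous in $t$ with $f'(t) = -|Z_t|^2$, the chain rule gives the pathwise identity
\[
  f(t)^n \;=\; n\int_t^T f(s)^{n-1} |Z_s|^2 \dx s
  \;=\; n\int_t^T \Big(\int_s^T |Z_r|^2 \dx r\Big)^{n-1} |Z_s|^2 \dx s.
\]
Taking $\mathbb{E}_\mathbb{Q}[\,\cdot\mid\mathcal{F}_t]$ on both sides and applying conditional Fubini, the right-hand side becomes
\[
  n \int_t^T \mathbb{E}_\mathbb{Q}\!\left[|Z_s|^2\, \mathbb{E}_\mathbb{Q}\!\Big[\Big(\int_s^T |Z_r|^2 \dx r\Big)^{n-1} \,\Big|\, \mathcal{F}_s\Big] \,\Big|\, \mathcal{F}_t\right]\dx s,
\]
where the inner conditional expectation was introduced via the tower property since $|Z_s|^2$ is $\mathcal{F}_s$-measurable. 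The inductive hypothesis bounds the inner expectation by $(n-1)!\,C^{n-1}$ almost surely, so the whole quantity is at most
\[
  n!\, C^{n-1} \,\mathbb{E}_\mathbb{Q}\!\left[\int_t^T |Z_s|^2 \dx s \,\Big|\, \mathcal{F}_t\right] \;\leq\; n!\, C^n,
\]
by one more application of the $BMO(\mathbb{Q})$ bound. This completes the induction.

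No step is really delicate; the only thing to be careful about is the measurability needed to apply the tower property (which is immediate because $Z$ is progressively measurable, so the inner conditional expectation $\mathbb{E}_\mathbb{Q}[(\int_s^T |Z_r|^2\dx r)^{n-1}\mid\mathcal{F}_s]$ can be chosen jointly measurable in $(s,\omega)$) and the finiteness needed to justify conditional Fubini, which follows inductively because all quantities are non-negative and the inductive bound gives integrability at the previous step. Once the pointwise conditional bound is established, setting $t=0$ yields $\mathbb{E}_\mathbb{Q}[(\int_0^T |Z_s|^2\dx s)^n] \leq n!\,\|Z\|_{BMO(\mathbb{Q})}^{2n}$, and extracting the $2n$-th root gives the energy inequality.
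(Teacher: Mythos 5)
Your proof is correct and follows the same approach as the paper: the paper introduces $A_t=\int_0^t|Z_s|^2\dx s$, notes the BMO bound $\mathbb{E}_\mathbb{Q}[A_T-A_t\,|\,\mathcal{F}_t]\leq\|Z\|_{BMO(\mathbb{Q})}^2$, and then simply invokes the classical ``energy inequalities'' to conclude $\mathbb{E}_\mathbb{Q}[A_T^n]\leq n!\,\|Z\|_{BMO(\mathbb{Q})}^{2n}$. You supply the inductive proof of precisely that cited inequality (via the pathwise identity $f(t)^n=n\int_t^T f(s)^{n-1}|Z_s|^2\dx s$, Tonelli, and the tower property), so your argument is a self-contained expansion of the paper's one-line reference rather than a different route.
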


\begin{proof}
  Let us define $A_t:=\int_0^t |Z_s|^2\dx s$, $t\in[0,T]$, which is progressively measurable, non-decreasing, starts at $0$ and satisfies $\mathbb{E}_{\mathbb{Q}}[A_T-A_t|\mathcal{F}_t]\leq \|Z\|^2_{BMO(\mathbb{Q})}$ for all $t\in[0,T]$. Therefore, using energy inequalities we have
  \begin{equation*}
    \mathbb{E}_{\mathbb{Q}}[(A_T)^n]\leq n! \left(\|Z\|^2_{BMO(\mathbb{Q})}\right)^n,
  \end{equation*}
  which implies the assertion.
\end{proof}

\begin{lemma}\label{expzcontrol}
  For all $K>0$ there is a constant $C>0$, which is increasing in $K$, such that
  \begin{equation*}
    \mathbb{E}_{\mathbb{Q}}\bigg[\exp\bigg(\int_t^T |Z_s|\dx s\bigg)\bigg|\mathcal{F}_t\bigg]\leq C\quad\text{a.s., for all }t\in[0,T],
  \end{equation*}
  all probability measures $\mathbb{Q}\sim\mathbb{P}$ and all $Z\in BMO(\mathbb{Q})$ such that $\|Z\|_{BMO(\mathbb{Q})}\leq K$.
\end{lemma}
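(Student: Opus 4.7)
The plan is to reduce the claim to a bound on exponential moments of $\int_t^T |Z_s|^2\dx s$ via an elementary AM--GM step, and then to control the latter via the conditional analogue of the energy inequality already used in the proof of Lemma \ref{energ}.

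For the first step, I would use the pointwise bound $|z|\leq \tfrac{1}{2c} + \tfrac{c}{2}|z|^2$ (valid for any $c>0$), which integrated in time gives
\[
\int_t^T |Z_s|\dx s \;\leq\; \frac{T}{2c} + \frac{c}{2}\int_t^T|Z_s|^2\dx s.
\]
Setting $A_{t,T} := \int_t^T|Z_s|^2\dx s$ and choosing $c := 1/K^2$, this yields the deterministic estimate
\[
\exp\!\left(\int_t^T|Z_s|\dx s\right)\;\leq\; e^{TK^2/2}\exp\!\left(\frac{A_{t,T}}{2K^2}\right),
\]
so that the task reduces to bounding $\mathbb{E}_{\mathbb{Q}}[\exp(A_{t,T}/(2K^2))\mid\mathcal{F}_t]$ by a universal constant.

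For the second step, I need the conditional version of the energy inequality appearing in the proof of Lemma \ref{energ}, namely $\mathbb{E}_{\mathbb{Q}}[A_{t,T}^n\mid\mathcal{F}_t]\leq n!\,K^{2n}$ for every integer $n\geq 0$. This is proved by induction on $n$: writing
\[
A_{t,T}^n \;=\; n!\int\!\cdots\!\int_{t\leq s_1\leq\cdots\leq s_n\leq T}|Z_{s_1}|^2\cdots |Z_{s_n}|^2\,\dx s_n\cdots \dx s_1,
\]
applying Fubini and conditioning the innermost integral on $\mathcal{F}_{s_{n-1}}$, the BMO bound $\mathbb{E}_{\mathbb{Q}}[\int_{s_{n-1}}^T|Z_{s_n}|^2\dx s_n\mid\mathcal{F}_{s_{n-1}}]\leq K^2$ together with the tower property produces the recursion $\mathbb{E}_{\mathbb{Q}}[A_{t,T}^n\mid\mathcal{F}_t]\leq nK^2\,\mathbb{E}_{\mathbb{Q}}[A_{t,T}^{n-1}\mid\mathcal{F}_t]$, from which the claim follows.

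The third step just puts these together by Taylor expansion:
\[
\mathbb{E}_{\mathbb{Q}}\!\left[\exp\!\left(\frac{A_{t,T}}{2K^2}\right)\bigg|\mathcal{F}_t\right] \;=\; \sum_{n=0}^{\infty}\frac{1}{n!\,(2K^2)^n}\,\mathbb{E}_{\mathbb{Q}}[A_{t,T}^n\mid\mathcal{F}_t] \;\leq\; \sum_{n=0}^{\infty}2^{-n}\;=\;2.
\]
Hence one may take $C := 2\,e^{TK^2/2}$, which is manifestly increasing in $K$ and independent of $t$, $\mathbb{Q}$ and $Z$. The only nontrivial point is the conditional energy estimate; the AM--GM reduction is what makes the resulting geometric series converge regardless of how large $K$ is, which is the main obstacle to using the more naive series bound $\sum_n \lambda^n K^{2n}$ directly on $\exp(\lambda\int_t^T |Z_s|\dx s)$.
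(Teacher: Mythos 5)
Your proof is correct and rests on the same fundamental ingredient as the paper's, namely the conditional form of the energy inequality $\mathbb{E}_{\mathbb{Q}}[A_{t,T}^n\mid\mathcal{F}_t]\leq n!\,K^{2n}$, but the reduction step differs in a way that is worth noting. The paper expands $\exp(\int_t^T|Z_s|\dx s)$ directly into its Taylor series and handles each term via the pathwise Cauchy--Schwarz bound $\int_t^T|Z_s|\dx s\leq\sqrt{(T-t)\int_t^T|Z_s|^2\dx s}$ together with a conditional Jensen step, arriving at the series $\sum_k \frac{T^{k/2}}{\sqrt{k!}}K^k$, whose convergence then has to be established by a separate ratio-test argument. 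You instead apply AM--GM pointwise, $|z|\leq\frac{1}{2c}+\frac{c}{2}|z|^2$, and crucially let the parameter $c=1/K^2$ depend on the BMO bound, which turns the problem into controlling $\mathbb{E}_{\mathbb{Q}}[\exp(A_{t,T}/(2K^2))\mid\mathcal{F}_t]$; after the series expansion, the factorials cancel exactly and you are left with the geometric series $\sum_n 2^{-n}=2$. The payoff is an explicit constant $C=2e^{TK^2/2}$ and manifestly convergent series, at the cost of the pre-multiplication by $e^{TK^2/2}$ that the paper's version avoids. One small point to be pedantic about: you should justify the interchange of expectation and infinite sum (monotone convergence suffices since all terms are nonnegative), and the Fubini/tower argument in your conditional energy estimate should be stated for progressively measurable $Z$ with some care about measurability of the nested conditional expectations, but these are routine and the paper is equally terse on the same points.
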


\begin{proof}
  We apply Lemma \ref{energ} to estimate
  \begin{align*}
    \mathbb{E}_{\mathbb{Q}}\bigg[\exp&\left(\int_t^T |Z_s|\dx s\right)\bigg|\mathcal{F}_t\bigg]
      = \mathbb{E}_{\mathbb{Q}}\left[\sum_{k=0}^\infty\frac{1}{k!}\left(\int_t^T |Z_s|\dx s\right)^k\bigg|\mathcal{F}_t\right]\\
    & \leq \sum_{k=0}^\infty\frac{1}{k!}\mathbb{E}_{\mathbb{Q}}\left[\left(\int_t^T |Z_s|\dx s\right)^k\bigg|\mathcal{F}_t\right]
      \leq \sum_{k=0}^\infty\frac{1} {k!}\mathbb{E}_{\mathbb{Q}}\left[\left((T-t) \int_t^T |Z_s|^2\dx s\right)^\frac{k}{2}\bigg|\mathcal{F}_t\right]\\
    & \leq \sum_{k=0}^\infty\frac{1}{k!}\left(\mathbb{E}_{\mathbb{Q}}\left[\left(T \int_t^T |Z_s|^2\dx s\right)^k\bigg|\mathcal{F}_t\right]\right)^\frac{1}{2}
      \leq \sum_{k=0}^\infty\frac{T^{\frac{k}{2}}}{k!}\left(k!\left(\|Z\|^2_{BMO(\mathbb{Q})}\right)^k\right)^\frac{1}{2}\\
    & \leq \sum_{k=0}^\infty\frac{T^{\frac{k}{2}}}{\sqrt{k!}}K^k=:C<\infty.
  \end{align*}
  We use
  \begin{equation*}
    \left(\frac{T^{\frac{k+1}{2}}}{\sqrt{(k+1)!}}K^{k+1}\right)\cdot\left(\frac{T^{\frac{k}{2}}}{\sqrt{k!}}K^k\right)^{-1}=\frac{T^{\frac{1}{2}}}{\sqrt{k+1}}K\to 0,\quad k\to\infty,
  \end{equation*}
  to see that the series converges absolutely and is monotonically increasing in $K$.
\end{proof}

\begin{lemma}\label{lindimcont}
  For some $N\in\mathbb{N}$ let $Y$ be an $\mathbb{R}^{1\times N}$-valued progressively measurable bounded process on $[0,T]$, the dynamical behavior of which is described by
  \begin{equation}\label{yadb}
    Y_s= Y_T-\int_s^T\dx W^\top_rZ_r-\int_s^T \left(\alpha_r+Y_r\left(\delta_r I_N+\beta_r\right)+\sum_{i=1}^dZ^{i}_r\gamma^{i}_r+\mu_r^\top Z_r\right)\dx r,\quad s\in[0,T],
  \end{equation}
  where
  \begin{itemize}
    \item $Y_T$ is $\mathbb{R}^{1\times N}$-valued, $\mathcal{F}_T$-measurable and bounded,
    \item $Z$ is some $\mathbb{R}^{d\times N}$-valued progressively measurable process s.t. $\int_0^T |Z|^2_r\dx r<\infty$ a.s., which can also be interpreted as a vector $(Z^{i})_{i=1,\ldots, d}$ of $\mathbb{R}^{1\times N}$-valued progressively measurable processes $Z^{i}$, $i=1,\ldots,d$,
    \item $\alpha$ is an $\mathbb{R}^{1\times N}$-valued $BMO(\mathbb{P})$-process,
    \item $\delta$ is some non-negative progressively measurable process with $\int_0^T\delta_s\dx s<\infty$ a.s.,
    \item $I_N\in\mathbb{R}^{N\times N}$ is the identity matrix,
    \item $\beta$ is an $\mathbb{R}^{N\times N}$-valued $BMO(\mathbb{P})$-process,
    \item $\gamma^{i}$, $i=1,\ldots,d$, are progressively measurable and bounded $\mathbb{R}^{N\times N}$-valued processes,
    \item $\mu$ is an $\mathbb{R}^d$-valued $BMO(\mathbb{P})$-process.
  \end{itemize}
  Then $Y$ is bounded by $\|Y\|_{\infty}\leq C_1\cdot\|Y_T\|_\infty+C_2\cdot\|\alpha\|_{BMO(\mathbb{P})}$ with constants $C_1,C_2\in[0,\infty)$ which depend only on $T$, $\|\beta\|_{BMO(\mathbb{P})}$, $\|\mu\|_{BMO(\mathbb{P})}$ and $\|\gamma^{(i)}\|_\infty$, $i=1,\ldots,d,$ and are monotonically increasing in these values.
\end{lemma}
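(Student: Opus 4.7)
The plan is to derive a Feynman--Kac type representation for $Y_s$ via a change of probability measure and a matrix-valued integrating factor, and then to estimate the resulting conditional expectation by means of the BMO machinery of Lemma \ref{expzcontrol}. Since $\mu\in BMO(\mathbb{P})$, Kazamaki's theorem supplies an equivalent measure $\mathbb{Q}\sim\mathbb{P}$ with density $\mathcal{E}(-\mu\bullet W)_T$ under which $\tilde W_r:=W_r-\int_0^r\mu_s\,\dx s$ is Brownian, and the BMO-norms of $\alpha$ and $\beta$ change by at most a multiplicative constant that is monotone in $\|\mu\|_{BMO(\mathbb{P})}$. Under $\mathbb{Q}$ the $\mu^\top Z$ summand in \eqref{yadb} is absorbed, so a bound of the form $\|Y\|_\infty\leq C_1\|Y_T\|_\infty+C_2\|\alpha\|_{BMO(\mathbb{Q})}$ translates to the desired $\mathbb{P}$-statement. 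I may therefore assume $\mu\equiv 0$ and work under $\mathbb{Q}$ from here on.

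To absorb the remaining $Y(\delta I_N+\beta)$- and $\sum_iZ^i\gamma^i$-couplings simultaneously, for each $s\in[0,T]$ I introduce the matrix-valued process $\Gamma^s\in\mathbb{R}^{N\times N}$ solving on $[s,T]$
\[
  \dx\Gamma^s_r=-(\delta_rI_N+\beta_r)\Gamma^s_r\,\dx r-\sum_{i=1}^d\gamma^i_r\Gamma^s_r\,\dx\tilde W_{i,r},\qquad\Gamma^s_s=I_N.
\]
A componentwise It\^o expansion of $Y_r\Gamma^s_r$ shows that its drift collapses to $\alpha_r\Gamma^s_r\,\dx r$: the contribution $Y(\delta_rI_N+\beta_r)\Gamma$ is cancelled by $Y$ times the drift of $\Gamma$, and---this is the key point---the contribution $\sum_iZ^i\gamma^i\Gamma$ is cancelled exactly by the cross-variation $\dx\langle Y,\Gamma\rangle$ generated by the martingale part of $\Gamma$. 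Once the remaining stochastic integral is shown to be a true $\mathbb{Q}$-martingale (which follows from the boundedness of $Y$ together with the $L^p$-bounds on $\Gamma^s$ derived below), conditioning on $\mathcal{F}_s$ yields
\[
  Y_s=\mathbb{E}_\mathbb{Q}\!\left[Y_T\Gamma^s_T-\int_s^T\alpha_r\Gamma^s_r\,\dx r\,\Big|\,\mathcal{F}_s\right].
\]

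To control the right-hand side I bound $|\Gamma^s|_F^p$ for $p=2$ and $p=4$. Applying It\^o to $|\Gamma^s|_F^p$ and exploiting $\delta\geq 0$ together with $\sum_i\|\gamma^i\|_\infty^2<\infty$ produces a drift bound of the form $c_p(r)|\Gamma^s_r|_F^p\,\dx r$ with $c_p(r)=a_p|\beta_r|_F+b_p$, where $a_p,b_p$ depend only on $p$ and $\sum_i\|\gamma^i\|_\infty^2$; hence $|\Gamma^s|_F^p\exp(-\int_s^\cdot c_p\,\dx u)$ is a nonnegative $\mathbb{Q}$-supermartingale starting from $N^{p/2}$. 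The Cauchy--Schwarz split $|\Gamma^s_T|_F=\bigl(|\Gamma^s_T|_F^2\,e^{-\int c_2}\bigr)^{1/2}\bigl(e^{\int c_2}\bigr)^{1/2}$, combined with Lemma \ref{expzcontrol} applied to $\beta\in BMO(\mathbb{Q})$, yields a uniform bound on $\mathbb{E}_\mathbb{Q}[|\Gamma^s_T|_F\mid\mathcal{F}_s]$; the analogous argument using the $p=4$ supermartingale controls $\mathbb{E}_\mathbb{Q}\bigl[\int_s^T|\Gamma^s_r|_F^2\,\dx r\mid\mathcal{F}_s\bigr]$. Bounding the two pieces of the representation separately---the first directly, the second via conditional Cauchy--Schwarz together with the inequality $\mathbb{E}_\mathbb{Q}[\int_s^T|\alpha_r|^2\,\dx r\mid\mathcal{F}_s]\leq\|\alpha\|_{BMO(\mathbb{Q})}^2$---yields the required estimate with the claimed structural dependence of the constants.

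The genuine difficulty is the matrix nature of $\gamma^i$: a single Girsanov change can only absorb $Z$-couplings of the form $\nu^\top Z$ with a scalar-coefficient $\nu$, and is therefore useless against $\sum_iZ^i\gamma^i$ when the $\gamma^i$ are genuinely $N\times N$. The matrix-valued $\Gamma^s$ is the device that replaces Girsanov here, and the technical heart of the proof is the extraction of the $L^p$-moment bounds on $\Gamma^s$ with exactly the monotonic dependence on $T$, $\|\beta\|_{BMO(\mathbb{P})}$, $\|\mu\|_{BMO(\mathbb{P})}$ and $\|\gamma^i\|_\infty$ asserted by the statement.
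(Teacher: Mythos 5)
Your argument is sound and follows the same overall architecture as the paper's proof: a Girsanov change removes the $\mu^\top Z$ term, a matrix-valued integrating factor $\Gamma$ absorbs the $Y(\delta I_N+\beta)$ drift and (via the quadratic covariation $\dx\langle Y,\Gamma\rangle$) the $\sum_i Z^i\gamma^i$ term, and conditional Cauchy--Schwarz together with BMO-exponential control finishes the estimate. The one meaningful difference is that you define $\Gamma^s$ directly as the solution of the linear matrix SDE rather than, as the paper does, writing it down as a matrix exponential and then asserting its It\^o dynamics; since the matrices $\delta_rI_N+\beta_r$ and $\gamma^i_r$ need not commute at different times, the naive differentiation of the matrix exponential used in the paper is not literally correct, and your SDE-based definition is the right way to make the integrating-factor argument rigorous. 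Your $L^p$-estimate via the supermartingale $|\Gamma^s|_F^p\exp(-\int c_p)$ is a clean substitute for the paper's direct pointwise bound on the exponential, and gives the same dependence on $\|\beta\|_{BMO}$, $\|\gamma^i\|_\infty$ and $T$. Two small blemishes: the formula $\tilde W_r=W_r-\int_0^r\mu_s\,\dx s$ has the wrong sign relative to the density $\mathcal{E}(-\mu\bullet W)_T$ you chose (it should be $\tilde W_r=W_r+\int_0^r\mu_s\,\dx s$, which is what is needed to absorb $\mu^\top Z$); and when verifying that the stochastic integral in $Y\Gamma$ is a true $\mathbb{Q}$-martingale you should make explicit, as the paper does, that this is deduced from the identity $\int_s^\cdot\sum_i(Z^i\Gamma-Y\gamma^i\Gamma)\,\dx\tilde W^i=Y\Gamma-Y_s\Gamma_s-\int_s^\cdot\alpha\Gamma\,\dx r$, whose right-hand side has uniformly integrable supremum by the bounds you derived on $Y$ and $\Gamma$; no independent integrability assumption on $Z$ is needed or available.
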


\begin{proof}
  In order to get rid of the term $\mu_r^\top Z_r$ we define a Brownian motion with drift on $[0,T]$ via
  \begin{equation*}
    \tilde{W}_s:=W_s+\int_{0}^s\mu_r\dx r,\quad s\in[0,T]
  \end{equation*}
  Using a standard Girsanov measure change $\tilde{W}$ is a Brownian motion w.r.t. to some equivalent probability measure $\mathbb{Q}$. Furthermore, using \eqref{yadb} the process $Y$ has dynamics
  \begin{equation*}
    Y_s= Y_T-\int_s^T\dx \tilde{W}^\top_rZ_r-\int_s^T \left(\alpha_r+Y_r\left(\delta_r I_N+\beta_r\right)+\sum_{i=1}^dZ^{i}_r\gamma^{i}_r\right)\dx r,\quad s\in[0,T].
  \end{equation*}
  Now, choose a $t\in[0,T]$. We want to control $Y_t$. For that purpose define
  \begin{equation*}
     \Gamma_s:=\exp\left(-\int_{t}^s\left(\delta_rI_N+\beta_r\right)\dx r-\int_{t}^s \sum_{i=1}^d\dx\tilde{W}^i_r\gamma^i_r-\frac{1}{2}\int_{t_1}^s \sum_{i=1}^d \gamma^i_r\gamma^i_r\dx r \right),\quad s\in[t,T].
  \end{equation*}
  According to the It\^o formula $\Gamma$ has dynamics
  \begin{equation*}
    \Gamma_s=\Gamma_T+\int_{s}^T \sum_{i=1}^d\dx\tilde{W}^i_r\gamma^i_r\Gamma_r+\int_s^T\left(\delta_rI_N+\beta_r\right)\Gamma_r\dx r,
  \end{equation*}
  for $s\in[t,T]$. Now, we apply the It\^o formula to $Y_s\Gamma_s$ to obtain
  \begin{align*}
    Y_s\Gamma_s&=Y_T\Gamma_T-\int_s^T\sum_{i=1}^d\dx\tilde{W}^i_r\left(Z^i_r\Gamma_r-Y_r\gamma^i_r\Gamma_r\right) \\
    &-\int_s^T \left\{\left(\alpha_r+ Y_r\left(\delta_rI_N+\beta_r\right)+\sum_{i=1}^d Z^i_r\gamma^i_r\right)\Gamma_r-Y_r\left(\delta_rI_N+\beta_r\right)\Gamma_r-
    \sum_{i=1}^d Z^i_r\gamma^i_r\Gamma_r\right\}\dx r.
  \end{align*}
  A few terms cancel out and we end up with
  \begin{equation}\label{YGdyn}
    Y_s\Gamma_s=Y_T\Gamma_T-\int_s^T\sum_{i=1}^d\dx\tilde{W}^i_r\left(Z^i_r\Gamma_r-Y_r\gamma^i_r\Gamma_r\right)-\int_s^T\alpha_r\Gamma_r\dx r.
  \end{equation}
  We now want to control $\sup_{s\in[t,T]}|\Gamma_s|$: Observe that due to $\delta\geq 0$ we have for all $p\geq 1$
  \begin{align*}
    &\mathbb{E}_{\mathbb{Q}}\left[\sup_{s\in[t,T]}|\Gamma_s|^p\,\bigg|\,\mathcal{F}_t\right]\\
    &=\mathbb{E}_{\mathbb{Q}}\bigg[\sup_{s\in[t,T]}\bigg|\exp\left(-\int_{t}^s\delta_r\dx r -\int_{t}^s\beta_r\dx r-\int_{t}^s \sum_{i=1}^d\dx\tilde{W}^i_r\gamma^i_r-\frac{1}{2}\int_{t}^s \sum_{i=1}^d \gamma^i_r\gamma^i_r\dx r\right)\bigg|^p\bigg|\mathcal{F}_t\Bigg]\\
    &\leq \mathbb{E}_{\mathbb{Q}}\left[\sup_{s\in[t,T]}\left|\exp\left(\left|\int_{t}^s\beta_r\dx r\right|+\left|\int_{t}^s \sum_{i=1}^d\dx\tilde{W}^i_r\gamma^i_r\right|+\frac{1}{2}\left|\int_{t}^s \sum_{i=1}^d \gamma^i_r\gamma^i_r\dx r\right|\right)\right|^p\bigg|\mathcal{F}_t\right] \\
    &\leq\mathbb{E}_{\mathbb{Q}}\Bigg[\sup_{s\in[t,T]}\exp\left(p\int_{t}^s\left|\beta_r\right|\dx r+\frac{p}{2}T\|\gamma\|^2_\infty\right)\cdot
    \sup_{s\in[t,T]}\exp\left(p\left|\int_{t}^s \sum_{i=1}^d\dx\tilde{W}^i_r\gamma^i_r\right|\right)\bigg|\mathcal{F}_t\Bigg],
  \end{align*}
  which using Cauchy-Schwarz inequality can be further controlled by
  \begin{equation*}
    \left(\mathbb{E}_{\mathbb{Q}}\left[\exp\left(\int_{t}^T2p|\beta_r|\dx r+pT\|\gamma\|^2_\infty\right)\bigg|\mathcal{F}_t\right]\mathbb{E}_{\mathbb{Q}}\left[\sup_{s\in[t,T]}\exp\left(2p\left|\int_{t}^s \sum_{i=1}^d\dx\tilde{W}^i_r\gamma^i_r\right|\right)\bigg|\mathcal{F}_t\right]\right)^{\frac{1}{2}}.
  \end{equation*}
  Due to Lemma \ref{expzcontrol} the first of the two factors above can be controlled by a finite constant, which depends only on $p$, $\|\beta\|_{BMO(\mathbb{Q})}$, $\|\gamma\|_\infty$ and $T$ and is monotonically increasing in these values. Notice, that $\|\beta\|_{BMO(\mathbb{Q})}$ can be controlled by $\|\beta\|_{BMO(\mathbb{P})}$ and $\|\mu\|_{BMO(\mathbb{P})}$ by Theorem A.1.6 in \cite{Fromm2015} (or see \cite{Kazamaki1994}, Theorem 2.4. and Theorem 3.6.). 
  
  The second factor can be estimated using Doob's martingale inequality: 
  \begin{align*}
    &\mathbb{E}_{\mathbb{Q}}\left[\exp\left(2p\sup_{s\in[t,T]}\left|\int_{t}^s \sum_{i=1}^d\dx\tilde{W}^i_r\gamma^i_r\right|\right)\bigg|\mathcal{F}_t\right]
    \leq \sum_{k=0}^\infty \frac{1}{k!} \mathbb{E}_{\mathbb{Q}}\left[\left(\sup_{s\in[t,T]}\left|\int_{t}^s \sum_{i=1}^d\dx\tilde{W}^i_r\left(2p\gamma^i_r\right)\right|\right)^k\bigg|\mathcal{F}_t\right] \\
    &\leq  1+\mathbb{E}_{\mathbb{Q}}\bigg[\sup_{s\in[t,T]}\bigg|\int_{t}^s \sum_{i=1}^d2p\dx\tilde{W}^i_r\gamma^i_r\bigg|\bigg|\mathcal{F}_t\bigg]
     +\sum_{k=2}^\infty \frac{1}{k!}\bigg(\frac{k}{k-1}\bigg)^k \mathbb{E}_{\mathbb{Q}}\bigg[\bigg|\int_{t}^T \sum_{i=1}^d\dx\tilde{W}^i_r\big(2p\gamma^i_r\big)\bigg|^k\bigg|\mathcal{F}_t\bigg].
  \end{align*}
  Using Cauchy-Schwarz inequality and Doob's martingale inequality again, the above value can be controlled by
  \begin{align*}
    1+2&\left(\mathbb{E}_{\mathbb{Q}}\left[\left|\int_{t}^T \sum_{i=1}^d\dx\tilde{W}^i_r\left(2p\gamma^i_r\right)\right|^2\bigg|\mathcal{F}_t\right]\right)^{\frac{1}{2}}+\sum_{k=2}^\infty \frac{1}{k!}4 \mathbb{E}_{\mathbb{Q}}\left[\left|\int_{t}^T \sum_{i=1}^d\dx\tilde{W}^i_r\left(2p\gamma^i_r\right)\right|^k\bigg|\mathcal{F}_t\right] \\
    &\leq 10\mathbb{E}_{\mathbb{Q}}\left[\exp\left(2p\left|\int_{t}^T\sum_{i=1}^d\dx\tilde{W}^i_r\gamma^i_r\right|\right)\bigg|\mathcal{F}_t\right].
  \end{align*}
  This value is bounded by a finite constant, which depends only on $p$, $T$ and $\|\gamma\|_\infty$ and is monotonically increasing in these values: For instance use Theorem 2.1 in \cite{Kazamaki1994} by applying it to finitely many sufficiently small subintervals of $[t,T]$ such that $2p\|\gamma\|_\infty$ multiplied by the square root of the size of every subinterval is smaller $1/5$. Also, use the triangle inequality and the tower property after splitting up the stochastic integral.
  One implication of the above control for $\sup_{s\in[t,T]}|\Gamma_s|$ is that the stochastic integral in \eqref{YGdyn} represents a uniformly integrable martingale with respect to $\mathbb{Q}$ since
  \begin{equation*}
    \int_t^s\sum_{i=1}^d\dx\tilde{W}^i_r\left(Z^i_r\Gamma_r-Y_r\gamma^i_r\Gamma_r\right)=Y_s\Gamma_s-Y_t\Gamma_t-\int_t^s\alpha_r\Gamma_r\dx r\quad\textrm{a.s., for all }s\in[t,T],
  \end{equation*}
  and, therefore, using triangle inequality, Cauchy-Schwarz inequality and simple estimates
  \begin{align*}
    \mathbb{E}_{\mathbb{Q}}\bigg[\sup_{s\in[t,T]}\bigg|\int_t^s&\sum_{i=1}^d\dx\tilde{W}^i_r\left(Z^i_r\Gamma_r-Y_r\gamma^i_r|\Gamma_r|\right)\bigg|\bigg]\\
    &\leq 2\|Y\|_\infty\mathbb{E}_{\mathbb{Q}}\left[\sup_{s\in[t,T]}|\Gamma_s|\right]+\mathbb{E}_{\mathbb{Q}}\left[\sup_{s\in[t,T]}\left|\int_t^s\alpha_r\Gamma_r\dx r\right|\right] \\
    &\leq 2\|Y\|_\infty\mathbb{E}_{\mathbb{Q}}\left[\sup_{s\in[t,T]}|\Gamma_s|\right]+\mathbb{E}_{\mathbb{Q}}\left[\sup_{s\in[t,T]}|\Gamma_s|\int_t^T|\alpha_r|\dx r\right] \\
    & \leq 2\|Y\|_\infty\mathbb{E}_{\mathbb{Q}}\left[\sup_{s\in[t,T]}|\Gamma_s|\right]+ \left(\mathbb{E}_{\mathbb{Q}}\left[\sup_{s\in[t,T]}|\Gamma_s|^2\right]\mathbb{E}_{\mathbb{Q}}\left[T\int_t^T|\alpha|^2_r\dx r\right]\right)^{\frac{1}{2}},
  \end{align*}
  which is finite due to $\alpha\in BMO(\mathbb{P})$ and Theorem A.1.6 in \cite{Fromm2015}. We can finally estimate using \eqref{YGdyn} and Cauchy-Schwarz inequality:
  \begin{align*}
    \left|Y_{t}\right|&=\left|\mathbb{E}_{\mathbb{Q}}\left[Y_{t}\Gamma_{t}|\mathcal{F}_t\right]\right|=\left|\mathbb{E}_{\mathbb{Q}}\left[Y_{T}\Gamma_{T}|\mathcal{F}_t\right]-\mathbb{E}_{\mathbb{Q}}\left[\int_{t}^T\alpha_r\Gamma_r\dx r\bigg|\mathcal{F}_t\right]\right|\\
    &\leq \|Y_T\|_\infty\mathbb{E}_{\mathbb{Q}}\left[|\Gamma_{T}||\mathcal{F}_t\right]+\left(\mathbb{E}_{\mathbb{Q}}\left[\sup_{s\in[t,T]}|\Gamma_s|^2\bigg|\mathcal{F}_t\right]\right)^{\frac{1}{2}}\left(\mathbb{E}_{\mathbb{Q}}\left[T\int_{t}^T|\alpha_r|^2\dx r\bigg|\mathcal{F}_t\right]\right)^{\frac{1}{2}}\\
    &\leq\|Y_T\|_\infty\sqrt{\mathbb{E}_{\mathbb{Q}}\left[|\Gamma_{T}|^2|\mathcal{F}_t\right]}+\sqrt{T}\|\alpha\|_{BMO(\mathbb{Q})}
    \left(\mathbb{E}_{\mathbb{Q}}\left[\sup_{s\in[t,T]}|\Gamma_s|^2\bigg|\mathcal{F}_t\right]\right)^{\frac{1}{2}}\\
    &\leq\|Y_T\|_\infty\sqrt{\mathbb{E}_{\mathbb{Q}}\left[|\Gamma_{T}|^2|\mathcal{F}_t\right]}+K_1\|\alpha\|_{BMO(\mathbb{P})}
    \left(\mathbb{E}_{\mathbb{Q}}\left[\sup_{s\in[t,T]}|\Gamma_s|^2\bigg|\mathcal{F}_t\right]\right)^{\frac{1}{2}},
  \end{align*}
  where we again used Theorem A.1.6 in \cite{Fromm2015}. $K_1$ depends only on $\|\mu\|_{BMO(\mathbb{P})}$ and $T$.
\end{proof}

The following theorem is an extension of a result from \cite{prhedg}.

\begin{thm}\label{BSDEBMO}
  Let $Y$, $Z$, $X$, $\psi$, $\varphi$ be some progressively measurable processes on $[0,T]$ such that $Y$ is real-valued and bounded, $Z$ is $\mathbb{R}^{1\times d}$-valued with $\int_0^T|Z_s|^2<\infty$ a.s., $\psi$ and $\varphi$ are real-valued and in $BMO(\mathbb{P})$, and $X$ is real-valued and satisfies $X\leq \psi^2+|Z|\varphi+C|Z|^2$ with some constant $C>0$.
  Furthermore,  suppose that
  \begin{equation*}
    Y_t=Y_T+\int_t^T X_s\dx s-\int_t^T Z_s\dx W_s \quad \textrm{a.s., }\quad  t\in[0,T].
  \end{equation*}
  Then we have $\|Z\|_{BMO(\mathbb{P})}\leq K<\infty$ for some constant $K$, which only depends on $\|Y\|_\infty$, $C$, $\|\varphi\|_{BMO(\mathbb{P})}$, $\|\psi\|_{BMO(\mathbb{P})}$ and is monotonically increasing in theses values.
\end{thm}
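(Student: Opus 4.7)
My plan is to apply It\^o's formula to $e^{\lambda Y_s}$ for a sufficiently large constant $\lambda>0$, which is the standard device for deriving BMO bounds for quadratic BSDEs (cf.\ Kobylanski-type estimates). From $dY_s = -X_s\dx s + Z_s\dx W_s$ and $|e^{\lambda Y_s}|\in[e^{-\lambda\|Y\|_\infty},e^{\lambda\|Y\|_\infty}]$, It\^o gives
\begin{equation*}
  e^{\lambda Y_t} = e^{\lambda Y_T} + \int_t^T \lambda e^{\lambda Y_s}\Bigl(X_s - \tfrac{\lambda}{2}|Z_s|^2\Bigr)\dx s - \int_t^T \lambda e^{\lambda Y_s} Z_s\dx W_s.
\end{equation*}
Using the hypothesis $X_s \leq \psi_s^2 + |Z_s|\varphi_s + C|Z_s|^2$ and Young's inequality $|Z_s|\varphi_s\leq \tfrac12|Z_s|^2 + \tfrac12\varphi_s^2$, I can bound
\begin{equation*}
  X_s - \tfrac{\lambda}{2}|Z_s|^2 \leq \psi_s^2 + \tfrac12\varphi_s^2 - \Bigl(\tfrac{\lambda}{2} - C - \tfrac12\Bigr)|Z_s|^2.
\end{equation*}
Choosing $\lambda := 2C+2$ so that $\tfrac{\lambda}{2}-C-\tfrac12=\tfrac12>0$, and rearranging, I obtain the key inequality
\begin{equation*}
  \tfrac12\int_t^T \lambda e^{\lambda Y_s}|Z_s|^2\dx s \leq e^{\lambda Y_t} - e^{\lambda Y_T} + \int_t^T \lambda e^{\lambda Y_s}\bigl(\psi_s^2 + \tfrac12\varphi_s^2\bigr)\dx s - \int_t^T \lambda e^{\lambda Y_s} Z_s\dx W_s.
\end{equation*}

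Next I need to take conditional expectation given $\mathcal{F}_t$, which requires the stochastic integral to be a true martingale. Since a priori we only know $\int_0^T|Z_s|^2\dx s<\infty$ a.s., I will localize by the sequence of stopping times $\tau_n := \inf\{s\geq t : \int_t^s|Z_r|^2\dx r \geq n\}\wedge T$, so that $\int_t^{\cdot\wedge\tau_n} \lambda e^{\lambda Y_s} Z_s\dx W_s$ is a genuine martingale (the integrand is bounded on $[t,\tau_n]$). Applying $\mathbb{E}[\,\cdot\,|\mathcal{F}_t]$ and using $e^{\lambda Y_s}\geq e^{-\lambda\|Y\|_\infty}$ on the left and $e^{\lambda Y_s}\leq e^{\lambda\|Y\|_\infty}$ on the right, together with $\psi,\varphi\in BMO(\mathbb{P})$, yields
\begin{equation*}
  \tfrac{\lambda}{2} e^{-\lambda\|Y\|_\infty}\,\mathbb{E}\!\left[\int_t^{\tau_n}|Z_s|^2\dx s\,\bigg|\,\mathcal{F}_t\right] \leq 2e^{\lambda\|Y\|_\infty} + \lambda e^{\lambda\|Y\|_\infty}\bigl(\|\psi\|_{BMO(\mathbb{P})}^2 + \tfrac12\|\varphi\|_{BMO(\mathbb{P})}^2\bigr).
\end{equation*}
Monotone convergence as $n\to\infty$ then gives the essentially bounded estimate on $\mathbb{E}[\int_t^T|Z_s|^2\dx s\,|\,\mathcal{F}_t]$, uniformly in $t\in[0,T]$, from which the desired bound $\|Z\|_{BMO(\mathbb{P})}\leq K$ follows with $K$ depending monotonically only on $\|Y\|_\infty$, $C$, $\|\psi\|_{BMO(\mathbb{P})}$ and $\|\varphi\|_{BMO(\mathbb{P})}$ (through $\lambda = 2C+2$ and the exponential factors).

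The main subtlety is the localization step: without it the conditional expectation of the stochastic integral cannot be set to zero, and one cannot a priori invoke BMO properties of $Z$ since that is exactly what we are trying to prove. The choice $\lambda = 2C+2$ is the minimal one making the quadratic term in $|Z|^2$ dominate after Young's inequality; any larger $\lambda$ would also work but would enlarge the constant $K$. The remaining arguments are routine applications of boundedness and the definition of the BMO norm.
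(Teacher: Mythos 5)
Your proposal follows essentially the same route as the paper's proof: apply It\^o's formula to $\exp(\lambda Y)$, absorb the cross term $|Z|\varphi$ via Young's inequality, choose the exponent large enough (you take $\lambda=2C+2$, the paper takes $\beta=2C+3$; both produce a strictly positive coefficient on $|Z|^2$), localize so the stochastic integral has vanishing conditional expectation, and conclude by monotone convergence. Aside from a harmless sign slip ($e^{\lambda Y_T}-e^{\lambda Y_t}$ rather than $e^{\lambda Y_t}-e^{\lambda Y_T}$, which disappears once you bound by $2e^{\lambda\|Y\|_\infty}$) and slightly different constants, the argument matches the paper's.
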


\begin{proof}
  Clearly, we see $X\leq\psi^2+|Z|\varphi+C|Z|^2\leq (\psi^2+\frac{1}{2}\varphi^2)+(C+\frac{1}{2})|Z|^2$. Let us set $\tilde{\psi}:=(\psi^2+\frac{1}{2}\varphi^2)^{1/2}\in BMO(\mathbb{P})$, $\tilde{C}:=C+\frac{1}{2}$, and write
  \begin{equation*}
    Y_t=Y_0-\int_0^t X_s\dx s+\int_0^t Z_s\dx W_s.
  \end{equation*}
  Let $\beta\in\mathbb{R}$ be some constant specified later. Using It\^o's formula we get
  $$ \exp(\beta Y_t)=\exp(\beta Y_0)-\int_0^t \beta\exp(\beta Y_s)X_s\dx s+\int_0^t \beta\exp(\beta Y_s)Z_s\dx W_s+
  \frac{\beta^2}{2}\int_0^t \exp(\beta Y_s)|Z_s|^2\dx s. $$
  So for every stopping time $\tau\in [t,T]$ we can write
  \begin{equation*}
    \exp(\beta Y_t)=\exp(\beta Y_\tau)+\int_t^\tau \beta\exp(\beta Y_s)X_s\dx s-\int_t^\tau \beta\exp(\beta Y_s)Z_s\dx W_s- \frac{\beta^2}{2}\int_t^\tau \exp(\beta Y_s)|Z_s|^2\dx s,
  \end{equation*}
  which can be rearranged to
  \begin{align*}
    \beta\int_t^\tau \exp(\beta Y_s)\left(\frac{\beta}{2}|Z_s|^2-X_s\right)\dx s
    =\exp(\beta Y_\tau)-\exp(\beta Y_t)-\int_t\tau \beta\exp(\beta Y_s)Z_s\dx W_s,
  \end{align*}
  or again to
  \begin{align*}
    \beta\int_t^\tau \exp(\beta Y_s)&\left(\frac{\beta}{2}|Z_s|^2+\tilde{\psi}^2_s-X_s\right)\dx s\\
    &=\exp(\beta Y_\tau)-\exp(\beta Y_t)+\beta\int_t^\tau \exp(\beta Y_s)\tilde{\psi}^2_s\dx s-\int_t^\tau \beta\exp(\beta Y_s)Z_s\dx W_s.
  \end{align*}
  Setting $\beta:=2\tilde{C}+2=2C+3$, we have $|Z_s|^2\leq\frac{\beta}{2}|Z_s|^2+\tilde{\psi}^2_s-X_s$. Now choose a localizing sequence of stopping times $\tau_n\in[t,T]$, $n\in\mathbb{N}$, such that $\mathbb{E}\left[\int_t^{\tau_n}|Z_s|^2\dx s\right]<\infty$ for every $n\in\mathbb{N}$ while $\tau_n\uparrow T$ for $n\to\infty$. Applying conditional expectations we have
  \begin{align*}
    \mathbb{E}\left[\beta\int_t^{\tau_n} \exp(\beta Y_s)|Z_s|^2\dx s\bigg|\mathcal{F}_t\right]
    &\leq \mathbb{E}\left[\beta\int_t^{\tau_n} \exp(\beta Y_s)\left(\frac{\beta}{2}|Z_s|^2+\tilde{\psi}^2_s-X_s\right)\dx s\right]\\
    \leq \mathbb{E}&\left[\exp(\beta Y_{\tau_n})-\exp(\beta Y_t)+\beta\int_t^{\tau_n} \exp(\beta Y_s)(\psi^2+\frac{1}{2}\varphi^2)\dx s\bigg|\mathcal{F}_t\right],
  \end{align*}
  which we can rewrite as
  \begin{align*}
    \mathbb{E}&\bigg[\int_t^{\tau_n} \exp(\beta Y_s)|Z_s|^2\dx s\bigg|\mathcal{F}_t\bigg]\\
    &\leq\mathbb{E}\left[\frac{\exp(\beta Y_{\tau_n})-\exp(\beta Y_t)}{\beta Y_T-\beta Y_t}\left(Y_{\tau_n}-Y_t\right)+\int_t^{\tau_n} \exp(\beta Y_s)(\psi^2+\frac{1}{2}\varphi^2)\dx s\bigg|\mathcal{F}_t\right]\\
    &\leq \left\|\frac{\exp(\beta Y_{\tau_n})-\exp(\beta Y_t)}{\beta Y_{\tau_n}-\beta Y_t}\right\|_\infty\cdot\|Y_{\tau_n}-Y_t\|_\infty+\exp\left(\beta\|Y\|_\infty\right)\left(\|\psi\|^2_{BMO(\mathbb{P})}+\frac{1}{2}\|\varphi\|^2_{BMO(\mathbb{P})}\right).
  \end{align*}
  Finally, note that the exponential function is Lipschitz continuous on any interval $[a,b]$ with $\exp(a\vee b)$ as Lipschitz constant, so
  \begin{equation*} \left\|\frac{\exp(\beta Y_{\tau_n})-\exp(\beta Y_t)}{\beta Y_{\tau_n}-\beta Y_t}\right\|_\infty\cdot\|Y_{\tau_n}-Y_t\|_\infty\leq \exp(\beta \|Y\|_\infty)\cdot 2\cdot\|Y\|_\infty.
  \end{equation*}
  We obtain by monotone convergence
  \begin{align*}
    \mathbb{E}&\left[\int_t^T|Z_s|^2 \dx s\bigg|\mathcal{F}_t\right]=\lim_{n\to\infty}\mathbb{E}\left[\int_t^{\tau_n}|Z_s|^2\dx s\bigg|\mathcal{F}_t\right]\\
    &\leq \lim_{n\to\infty}\exp(\beta \|Y\|_\infty)\mathbb{E}\left[\int_t^{\tau_n} \exp(\beta Y_s)|Z_s|^2\dx s\bigg|\mathcal{F}_t\right]\\
    &\leq 2\exp(2\beta \|Y\|_\infty)\|Y\|_\infty+\exp\left(2\beta\|Y\|_\infty\right)\left(\|\psi\|^2_{BMO(\mathbb{P})}+\frac{1}{2}\|\varphi\|^2_{BMO(\mathbb{P})}\right)\\
    &= 2\exp(2(2C+3) \|Y\|_\infty)\|Y\|_\infty+\exp\left(2(2C+3)\|Y\|_\infty\right)\left(\|\psi\|^2_{BMO(\mathbb{P})}+\frac{1}{2}\|\varphi\|^2_{BMO(\mathbb{P})}\right),
  \end{align*}
  which is finite and increasing in $\|Y\|_\infty$, $C$, $\|\varphi\|_{BMO(\mathbb{P})}$ and $\|\psi\|_{BMO(\mathbb{P})}$.
\end{proof}

\subsection*{Miscellaneous}

Finally, we collect here elementary properties of weakly differentiable processes.

\begin{lemma}(\cite{Fromm2015}, Lemma 4.3.11)\label{secdivcon}
  Let $\varphi\colon\mathbb{R}\to\mathbb{R}$ be twice weakly differentiable such that $\varphi(0)=0$ and $\|\varphi''\|_\infty<\infty$. Then one has
  \begin{equation*}
    \left|\int_\mathbb{R}\varphi(\sigma\cdot z)\frac{1}{\sqrt{2\pi}}e^{-\frac{1}{2}z^2}\dx z\right|\leq \frac{1}{2}\sigma^2\|\varphi''\|_\infty \quad \text{for all } \sigma\in [0,\infty).
  \end{equation*}
\end{lemma}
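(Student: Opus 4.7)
The plan is to apply a second-order Taylor expansion of $\varphi$ about the origin and exploit the first two moments of the standard Gaussian.

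First I would note that the bound $\|\varphi''\|_\infty<\infty$ forces $\varphi'$ (up to modification on a Lebesgue null set) to be Lipschitz continuous, so that $\varphi$ admits a $C^{1,1}$ representative. In particular the pointwise value $\varphi'(0)$ is well defined, and for every $y\in\mathbb R$ the integral-remainder form of Taylor's formula
\[
\varphi(y)=\varphi(0)+\varphi'(0)\,y+\int_0^y(y-s)\varphi''(s)\dx s
\]
is valid pointwise. The remainder $R(y):=\varphi(y)-\varphi(0)-\varphi'(0)y$ then satisfies the pointwise bound $|R(y)|\leq\tfrac12\,y^{2}\,\|\varphi''\|_\infty$.

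Next, I would substitute $y=\sigma z$ into the Gaussian integral. Using $\varphi(0)=0$ to annihilate the constant term and the vanishing of the first Gaussian moment, $\int_\mathbb R z\,\tfrac{1}{\sqrt{2\pi}}e^{-z^2/2}\dx z=0$, to annihilate the linear term, only the remainder survives:
\[
\int_\mathbb R \varphi(\sigma z)\,\frac{1}{\sqrt{2\pi}}e^{-z^2/2}\dx z
=\int_\mathbb R R(\sigma z)\,\frac{1}{\sqrt{2\pi}}e^{-z^2/2}\dx z.
\]
Pulling the modulus inside, bounding $|R(\sigma z)|\leq\tfrac12\,\sigma^{2}z^{2}\,\|\varphi''\|_\infty$, and invoking $\int_\mathbb R z^{2}\,\tfrac{1}{\sqrt{2\pi}}e^{-z^2/2}\dx z=1$ yields exactly the claimed estimate. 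The only subtlety is the pointwise validity of the Taylor expansion from the purely weak-differentiability hypothesis; it is resolved by a standard mollification argument (apply the expansion to $\varphi_\varepsilon:=\varphi\ast\rho_\varepsilon$, for which it is classical, and pass to the limit using locally uniform convergence $\varphi_\varepsilon\to\varphi$ together with $\|\varphi_\varepsilon''\|_\infty\leq\|\varphi''\|_\infty$). Hence I do not anticipate any genuine obstacle; the case $\sigma=0$ is trivial.
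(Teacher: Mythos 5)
Your argument is correct. There is no in-paper proof to compare against: the paper cites \cite{Fromm2015}, Lemma 4.3.11 without reproducing a proof, but the route you take --- a second-order Taylor expansion of the $C^{1,1}$ representative of $\varphi$ about the origin, using $\varphi(0)=0$ and the vanishing first Gaussian moment to eliminate the low-order terms, and then bounding the integral remainder by $\frac12 y^2\|\varphi''\|_\infty$ before invoking the unit second moment of the standard normal --- is the natural and evidently correct one. One small remark: since $\|\varphi''\|_\infty<\infty$ forces $\varphi'$ to admit a Lipschitz (hence absolutely continuous) representative, the integral-form Taylor expansion
\begin{equation*}
\varphi(y)=\varphi(0)+\varphi'(0)\,y+\int_0^y(y-s)\varphi''(s)\dx s
\end{equation*}
already holds pointwise for the continuous version of $\varphi$ by two applications of the fundamental theorem of calculus and Fubini, so the mollification fallback you sketch, while harmless, is not actually needed; it is also implicit in the statement that the pointwise hypotheses $\varphi(0)=0$ and $\|\varphi''\|_\infty<\infty$ refer to this continuous representative, which is precisely the one you work with.
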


\begin{lemma}(\cite{Ambrosio1990}, Corollary 3.2, and \cite{Fromm2015}, Lemma A.3.1)\label{chainruleambr}
  Let $N,m\in\mathbb{N}$ and let $g\colon\mathbb{R}^N\to \mathbb{R}^m$ be Lipschitz continuous. Moreover, let $X\colon\mathbb{R}^n\to \mathbb{R}^{N}$, $n\in\mathbb{N}$ be weakly differentiable. Then one has that $g(X)$ is also weakly differentiable, the restriction $g|_{T^{X}_\lambda}$ of $g$ to the affine space
  \begin{equation*}
    T^{X}_\lambda:=\left\{x\in\mathbb{R}^N\,\bigg|\,x=X(\lambda)+\frac{\dx}{\dx\lambda}X(\lambda)v,\text{ for some }v\in\mathbb{R}^n\right\}
  \end{equation*}
  is differentiable at $X(\lambda)$ for almost every $\lambda\in\mathbb{R}^n$, and
  \begin{equation*}
    \frac{\dx}{\dx\lambda}g(X)(\lambda)=\frac{\dx}{\dx x}g|_{T^{X}_\lambda}(X(\lambda))\frac{\dx}{\dx\lambda}X(\lambda)\quad \text{for almost all } \lambda\in\mathbb{R}^n.
  \end{equation*} 
  
  In particular, this implies:
  \begin{itemize}
   \item If $n=N$ and the matrix $\frac{\dx}{\dx\lambda}X(\lambda)$ is invertible for a.a. $\lambda$, then $T^{X}_\lambda=\mathbb{R}^N$ for a.a. $\lambda$ and
          $\frac{\dx}{\dx\lambda}g(X)=\left(\frac{\dx}{\dx x}g\right)(X)\frac{\dx}{\dx\lambda}X$
          a.e., where $\frac{\dx}{\dx x}g$ is a weak derivative of $g$.
    \item If $g$ is differentiable everywhere then $\frac{\dx}{\dx\lambda}g(X)=\left(\frac{\dx}{\dx x}g\right)(X)\frac{\dx}{\dx\lambda}X$ a.e.
    \item If $g$ is only locally Lipschitz continuous rather than Lipschitz continuous, but differentiable everywhere, while $X$ is bounded, then still $\frac{\dx}{\dx\lambda}g(X)=\left(\frac{\dx}{\dx x}g\right)(X)\frac{\dx}{\dx\lambda}X$ a.e.
  \end{itemize}
\end{lemma}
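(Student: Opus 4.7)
The main assertion, that $g(X)$ is weakly differentiable and that the stated chain-rule formula holds in terms of the restriction $g|_{T^{X}_\lambda}$, is precisely Corollary 3.2 of \cite{Ambrosio1990}, so I would not reprove it from scratch; the plan is to recall its structure briefly and then deduce the three particular consequences, which is the content of Lemma A.3.1 in \cite{Fromm2015}. The two ingredients behind Ambrosio's statement are Rademacher's theorem, which gives classical differentiability of $g$ at $\mathcal{L}^N$-almost every point, and a tangential approximation argument: mollifying $g$ by $g_k := g\ast\rho_k$ yields smooth functions with uniformly bounded Lipschitz constants for which $\nabla(g_k\circ X)=(\nabla g_k)(X)\cdot\frac{\mathrm{d}}{\mathrm{d}\lambda}X$ holds pointwise; the subtlety, when $n<N$, is that $X(\lambda)$ may fall into the Rademacher null set of $g$, and Ambrosio's contribution is precisely to show that passing to the limit still yields a limit differential, but only along the affine tangent space $T^{X}_\lambda$, which is exactly the set of directions that can be reached by infinitesimal variations of $\lambda$.

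Given the main statement, the three particular cases follow by straightforward identification of $T^{X}_\lambda$:

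\medskip
\noindent\emph{Case 1 ($n=N$ with $\frac{\mathrm{d}}{\mathrm{d}\lambda}X$ invertible a.e.).} At each such $\lambda$ the linear map $\frac{\mathrm{d}}{\mathrm{d}\lambda}X(\lambda)\colon\mathbb{R}^n\to\mathbb{R}^N$ is surjective, so its image is all of $\mathbb{R}^N$ and hence $T^{X}_\lambda=\mathbb{R}^N$. Consequently $g|_{T^{X}_\lambda}=g$, and its differential at $X(\lambda)$ is a full weak derivative of $g$. The only thing to check is that Ambrosio's chain rule evaluated at $X(\lambda)$ can indeed be written as $\frac{\mathrm{d}}{\mathrm{d}x}g(X(\lambda))$ for a fixed representative of $\frac{\mathrm{d}}{\mathrm{d}x}g$; this uses the area/change-of-variables formula to ensure that a null set in $\mathbb{R}^N$ has preimage of measure zero under $X$, which is exactly where the invertibility assumption is used.

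\medskip
\noindent\emph{Case 2 ($g$ differentiable everywhere).} If $g$ is classically differentiable at every point, then in particular at $X(\lambda)$, and $\frac{\mathrm{d}}{\mathrm{d}x}g|_{T^{X}_\lambda}(X(\lambda))$ is nothing but the ordinary differential restricted to the subspace. Composing with $\frac{\mathrm{d}}{\mathrm{d}\lambda}X(\lambda)$, whose range lies in $T^{X}_\lambda$ by definition, gives $(\frac{\mathrm{d}}{\mathrm{d}x}g)(X(\lambda))\cdot\frac{\mathrm{d}}{\mathrm{d}\lambda}X(\lambda)$ unambiguously, no Rademacher exceptional set being needed.

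\medskip
\noindent\emph{Case 3 (locally Lipschitz, differentiable, $X$ bounded).} Choose $R$ with $\|X\|_\infty\le R$, a smooth cutoff $\varphi\in C^\infty_c(\mathbb{R}^N)$ with $\varphi\equiv 1$ on $B_R$ and $\mathrm{supp}\,\varphi\subset B_{R+1}$, and replace $g$ by $\tilde g(x):=\varphi(x)(g(x)-g(0))+g(0)$. Then $\tilde g$ is globally Lipschitz, differentiable everywhere (in particular at every point of the image of $X$, where $\tilde g\equiv g$ and $\nabla\tilde g\equiv\nabla g$), and $\tilde g\circ X=g\circ X$ pointwise. Case 2 applied to $\tilde g$ therefore yields the claim for $g$.

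\medskip
The only genuinely nontrivial step is Ambrosio's Corollary 3.2 itself; everything else reduces to identifying $T^{X}_\lambda$ and, in Case 3, a cutoff argument. I would therefore expect no obstacle in writing out the proof once Ambrosio's result is cited, and the main care needed is in the formulation of Case 1, where one must justify using a pointwise representative of the weak derivative $\frac{\mathrm{d}}{\mathrm{d}x}g$ along the random path $X(\lambda)$, for which the invertibility of $\frac{\mathrm{d}}{\mathrm{d}\lambda}X$ is exactly the right hypothesis.
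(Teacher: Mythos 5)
Your proposal is correct and takes essentially the same route as the paper, which states this lemma without its own proof, citing Ambrosio's Corollary 3.2 for the tangential chain rule and Fromm's Lemma A.3.1 for the three special cases; your identification of $T^{X}_\lambda$ in Cases 1--2, the null-set preimage argument via the area formula under the invertibility assumption, and the cutoff reduction in Case 3 are exactly the intended deductions. The only cosmetic point is that in Case 3 the cutoff should equal $1$ on a neighbourhood of the closed ball containing the image of $X$ (for instance on $B_{R+1/2}$ when $\|X\|_\infty\le R$), so that $\nabla\tilde g=\nabla g$ holds at every point of that image, including boundary points.
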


{\small
\bibliography{quellenSkoro}{}
\bibliographystyle{amsalpha}
}

\end{document}